\documentclass[12pt]{amsart}
\usepackage{verbatim}
 
\usepackage{amssymb}
\usepackage{enumerate}

\usepackage{amsmath}
\usepackage{lineno}

\newtheorem{theorem}{Theorem}[section]
\newtheorem{proposition}[theorem]{Proposition}
\newtheorem{lemma}[theorem]{Lemma}
\newtheorem{cclaim}[theorem]{Claim}
\newtheorem{problem}[theorem]{Problem}
\newtheorem{ccase}{Case}
\newtheorem{scase}{Case}[ccase]
\newtheorem{sscase}{Case}[scase]

\newtheorem{obs}[theorem]{Observation}

\theoremstyle{definition}
\newtheorem{example}[theorem]{Example}{}

\newtheorem{definition}[theorem]{Definition}

\newcommand{\ccal}{{\mathcal C}}
\newcommand{\dcal}{{\mathcal D}}

\newcommand{\ical}{{\mathcal I}}

\newcommand{\pcal}{{\mathcal P}}

\newcommand{\setm}{\setminus}
\newcommand{\empt}{\emptyset}
\newcommand{\subs}{\subset}

\newcommand{\oo}{{\kappa}}
\newcommand{\oot}{{{\kappa}^+}}
\newcommand{\ooth}{{{\kappa}^{++}}}
\newcommand{\om}{{<{\kappa}}}

\def\<{\left\langle}
\def\>{\right\rangle}
\def\cf{\operatorname{cf}}

\def\br#1;#2;{\bigl[ {#1} \bigr]^ {#2} }

\def\to{\longrightarrow}
\def\rank{\operatorname{rk}}

\newcommand{\newcases}{\setcounter{ccase}{0}}

\newcommand{\pin}{\operatorname{\pi}}

\newcommand{\lev}[2]{\operatorname{I}_{#1}(#2)}

\newcommand{\htt}{\operatorname{ht}}

\newcommand{\aseq}[1]{\ccal(#1)}
\newcommand{\laseq}[2]{\ccal_{#1}(#2)}
\newcommand{\SEQ}{\operatorname{SEQ}}
\newcommand{\conseq}[2]{\<#1\>_{#2}}

\newcommand{\pib}{\operatorname{\pi}_B}
\newcommand{\piz}{\operatorname{\pi}}
\newcommand{\pim}{\operatorname{\pi}_-}
\newcommand{\bott}[1]{{#1}^-}
\newcommand{\topp}[1]{{#1}^+}
\newcommand{\incof}[1]{\operatorname{E}({#1})}

\newcommand{\eps}[2]{{\epsilon}^{#1}_{#2}}
\newcommand{\veps}[2]{{\varepsilon}^{#1}_{#2}}
\newcommand{\intpart}[1]{\operatorname{\mathcal E}({#1})}
\newcommand{\nn}[1]{\operatorname{n}({#1})}
\newcommand{\jjj}{\operatorname{j}}

\newcommand{\contint}[2]{\operatorname{I}({#1},#2)}
\newcommand{\jint}[1]{{\operatorname{J}(#1)}}

\newcommand{\oorbf}{\operatorname{o}}
\newcommand{\oorb}[1]{\oorbf(#1)}
\newcommand{\orbf}{\operatorname{o^*}}
\newcommand{\orb}[1]{\orbf(#1)}
\newcommand{\eorbf}{\operatorname{\overline{o}}}
\newcommand{\eorb}[1]{\eorbf(#1)}
\newcommand{\borbf}{\operatorname{o_B}}
\newcommand{\borb}[1]{\borbf(#1)}

\newcommand{\blocks}{\mathbb B}
\newcommand{\und}{X}
\newcommand{\block}[1]{B_{#1}}

\newcommand{\ifu}{\operatorname{i}}
\newcommand{\undef}{\text{undef}}

\newcommand{\irtaaa}[2]{{#2}^{-1}\{#1\}}
\newcommand{\concat}{\mathop{{}^{\frown}\makebox[-3pt]{}}}
\newcommand{\climit}{{\delta}}

\newcommand{\xn}[1]{x_{{\nu},#1}}
\newcommand{\xm}[1]{x_{{\mu},#1}}

\newcommand{\dlaseq}[2]{\dcal_{#1}(#2)}

\newcommand{\ml}{D}
\newcommand{\mb}{M}

\newcommand{\gu}{\operatorname{{\gamma}}}
\newcommand{\gd}{\operatorname{\underline{\gamma}}}

 \newcommand{\prer}{\preceq_r}
 \newcommand{\prenr}{\prec_r}
\newcommand{\ar}{A_r}
\newcommand{\ir}{\ifu_r}
 \newcommand{\prepq}{\preceq_{p,q}}
\newcommand{\prenpq}{\prec_{p,q}}

\newcommand{\mip}{{\prep}}

\newcommand{\mir}{{\prer}}

\newcommand{\al}{\alpha}
\newcommand{\be}{\beta}
\newcommand{\prep}{\preceq_p}

\newcommand{\pre}[1]{\preceq^{R#1}}
\newcommand{\pren}[1]{\prec^{R#1}}

\newtheorem{acase}{Case}

\begin{document}

\title{Cardinal sequences of LCS spaces under GCH}

\author[J. C. Martinez]{Juan Carlos Martinez}
\address{Facultat de Matem\`atiques \\ Universitat de Barcelona \\ Gran
  Via 585 \\ 08007 Barcelona, Spain}
\thanks{The first author was supported by
the Spanish Ministry of Education DGI grant MTM2005-00203}
\email{jcmartinez@ub.edu}
\author[L. Soukup]{
Lajos Soukup }
\thanks{The second author was partially supported by Hungarian National Foundation for Scientific Research grant no 61600. 
The research was started when the second 	author visited  
	the  Barcelona University.   
	The second  author would like to thank 
	Joan Bagaria and Juan-Carlos Mart\'\i nez for the arrangement of the 
	visit and their hospitality during the stay in Barcelona.
 }
\address{Alfr{\'e}d R{\'e}nyi Institute of Mathematics }
\email{soukup@renyi.hu}

\subjclass[2000]{54A25, 06E05, 54G12, 03E35, 03E05}
\keywords{locally compact scattered space, superatomic Boolean
algebra, cardinal sequence, universal}

\begin{abstract}

Let $\aseq {\alpha}$ denote the class of all cardinal sequences of
length $\alpha$ associated with compact scattered spaces.
Also put $$\laseq
{\lambda}{\alpha}=\{f\in \aseq{\alpha}: f(0)={\lambda} = \min[
f({\beta}) : \beta < {\alpha}]\}.$$

If ${\lambda}$ is a cardinal and ${\alpha}<{\lambda}^{++}$ is an
ordinal, we define $\dlaseq {\lambda}{\alpha}$ as follows: if
${\lambda}={\omega}$,
\begin{displaymath}
\dlaseq {\omega}{\alpha}=\{f\in
{}^{\alpha}\{{\omega},{\omega}_1\}: f(0)={\omega}\},  
\end{displaymath}
and if ${\lambda}$ is uncountable, 
\begin{multline}\notag
\dlaseq {\lambda}{{\alpha}}=\{f\in
{}^{\alpha}\{{\lambda},{\lambda}^+\}: f(0)={\lambda},\\
\irtaaa{{\lambda}}f\text{ is $<{\lambda}$-closed and successor-closed in
${\alpha}$}
\}.
\end{multline}
We show that for each uncountable regular cardinal ${\lambda}$ and ordinal
${\alpha}<{\lambda}^{++}$ it is consistent with GCH that
$\laseq {\lambda}{\alpha}$ is as large as possible, i.e.
$$
\laseq {\lambda}{\alpha}=\dlaseq {\lambda}{\alpha}.
$$
This yields  that  under GCH for  any sequence $f$ of regular cardinals of length ${\alpha}$
the following statements are equivalent:
\begin{enumerate}[(1)]
\item $f\in \aseq {\alpha}$ in some cardinal preserving and GCH-preserving
 generic-extension of the ground model.
\item  for some natural number $n$
there are infinite regular cardinals
$\lambda_0>\lambda_1>\dots>\lambda_{n-1}$  and ordinals
${\alpha}_0,\dots, {\alpha}_{n-1}$ such that
${\alpha}={\alpha}_0+\cdots+{\alpha}_{n-1}$ and $f=f_0\concat \
f_1\concat \cdots \ \concat f_{n-1}$ where each
$f_i\in\dlaseq{\lambda_i}{\alpha_i}$.
\end{enumerate}

The proofs are based on constructions of {\em universal}
locally compact scattered spaces.
\end{abstract}


\maketitle


\section{Introduction}

Given a locally compact scattered  $T_2$ (in short :
LCS) space  $X$
 the ${\alpha}^{\text{th}}$ Cantor-Bendixson level will be denoted by
$\lev{\alpha}X$.  The {\em height of $X$, $\htt(X)$,}
is the least ordinal ${\alpha}$ with $\lev {\alpha}X=\empt$.
The {\em reduced
height} $\htt^-(X)$ is the smallest ordinal $\alpha$ such that $\lev
{\alpha}{X}$ is finite.
Clearly, one has $\htt^-(X) \le \htt(X) \le \htt^-(X)+1.$
The {\em cardinal sequence} of  $X$,
denoted by $\SEQ(X)$,  is the
sequence of cardinalities of the
infinite Cantor-Bendixson levels of $X$, i.e.
\begin{displaymath}
\SEQ(X)=\bigl\langle\ |I_{\alpha}(X)|:{\alpha}<\htt(X)^-\ \bigr\rangle.
\end{displaymath}

A characterization in ZFC of the sequences of cardinals of
 length $\leq \omega_1$ that arise as cardinal sequences of LCS
 spaces is proved in \cite{JW2}. However, no characterization in ZFC is
 known for cardinal sequences of length $< \omega_2$.

For an ordinal ${\alpha}$ we let $\aseq {\alpha}$ denote the class of all cardinal sequences
of length $\alpha$ of LCS spaces.
We also put, for any fixed infinite cardinal $\lambda$,
\begin{displaymath}
\laseq {\lambda}{\alpha}=\{s\in \aseq{\alpha}: s(0)={\lambda}\land
\forall {\beta}<{\alpha}\ [s({\beta})\ge \lambda] \}.
\end{displaymath}

In \cite{JSW}, the authors show that a class $\aseq {\alpha}$ is
characterized if the classes $\laseq {\lambda}{\beta}$ are
characterized for every infinite cardinal $\lambda$ and every
ordinal $\beta\leq\alpha$. Then, they obtain under GCH a
characterization of the classes $\aseq {\alpha}$ for any ordinal
$\alpha <\omega_2$ by means of a  a full description under GCH of
the classes $\laseq {\lambda}{\alpha}$ for any ordinal
${\alpha}<{\omega}_2$ and any infinite cardinal ${\lambda}$. The
situation becomes, however, more complicated when we consider the
class $\aseq {\omega_2}$ . We can characterize under GCH the
classes $\laseq {\lambda}{\omega_2}$ for $\lambda
>\omega_1$, by using the description given in \cite{JSW} and the
following simple observation.

\begin{obs}\label{obs:union}
If ${\lambda}\ge {\omega}_2$, then
$f\in \laseq {\lambda}{{\omega}_2}$ iff
$f\restriction {\alpha}\in \laseq {\lambda}{\alpha}$ for each
${\alpha}<{\omega}_2$.
\end{obs}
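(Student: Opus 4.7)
The $(\Rightarrow)$ direction I expect to be immediate. Given an LCS space $X$ with $\SEQ(X)=f$, the $\alpha$-th Cantor--Bendixson derivative $X^{(\alpha)}=\bigcup_{\beta\ge\alpha}\lev{\beta}{X}$ is closed in $X$, so $U_\alpha=X\setminus X^{(\alpha)}$ is an open subspace of $X$ and hence itself LCS. Its Cantor--Bendixson levels are precisely $\lev{\beta}{X}$ for $\beta<\alpha$, so $\SEQ(U_\alpha)=f\restriction\alpha$ witnesses $f\restriction\alpha\in\laseq{\lambda}{\alpha}$.

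For $(\Leftarrow)$ my plan is to form a topological disjoint union. Using the hypothesis, for each $\alpha<\omega_2$ pick an LCS space $X_\alpha$ with $\SEQ(X_\alpha)=f\restriction\alpha$, and set
\[
X=\bigsqcup_{\alpha<\omega_2}X_\alpha.
\]
Each summand is clopen in $X$, so local compactness, scatteredness, and the $T_2$ property transfer routinely. A straightforward induction on $\beta$ should give that the Cantor--Bendixson derivative commutes with the decomposition, $X^{(\beta)}=\bigsqcup_\alpha X_\alpha^{(\beta)}$. Since $\htt(X_\alpha)\le\alpha+1<\omega_2$ this yields $X^{(\omega_2)}=\emptyset$, while every $X^{(\beta)}$ with $\beta<\omega_2$ is nonempty (it contains $X_\alpha^{(\beta)}\ne\emptyset$ whenever $\alpha>\beta$), so $\htt(X)=\omega_2$.

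The crucial cardinal computation is then
\[
|\lev{\beta}{X}| = \sum_{\beta<\alpha<\omega_2}|\lev{\beta}{X_\alpha}| = f(\beta)\cdot\omega_2 = f(\beta),
\]
where the last equality uses the cardinal absorption $f(\beta)\ge\lambda\ge\omega_2$. This is the sole place the hypothesis $\lambda\ge\omega_2$ enters, and it is exactly what prevents the disjoint-union construction from inflating every level to size $\omega_2$; without it the proof collapses. With this in hand $\SEQ(X)=f$, so $f\in\laseq{\lambda}{\omega_2}$. I anticipate no serious obstacle beyond verifying the decomposition $X^{(\beta)}=\bigsqcup_\alpha X_\alpha^{(\beta)}$, which is a routine consequence of each $X_\alpha$ being clopen in $X$.
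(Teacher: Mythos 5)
Your proof is correct and follows essentially the same route as the paper: the backward direction is exactly the paper's argument (disjoint union of the $X_\alpha$, level decomposition $\lev{\beta}{X}=\bigcup\{\lev{\beta}{X_\alpha}:\beta<\alpha<\omega_2\}$, and the absorption $\omega_2\cdot f(\beta)=f(\beta)$ from $f(\beta)\ge\lambda\ge\omega_2$), while the forward direction via the open subspace $X\setminus X^{(\alpha)}$ is the standard observation the paper leaves implicit.
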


\begin{proof}
If $\SEQ(X_{\alpha})=f\restriction {\alpha}$ for ${\alpha}<{\omega}_2$
then take $X$ as the disjoint union of $\{X_{\alpha}:{\alpha}<{\omega}_2\}$.
Then $\SEQ(X)=f$ because for any ${\beta}<{\omega}_2$ we have 
$\lev {\beta}X=
\bigcup\{\lev {\beta}{X_{\alpha}}:{\beta}<{\alpha}<{\omega}_2\}$
and so
\begin{equation}\notag
|\lev {\beta}X|=
\sum_{{\beta}<{\alpha}<{\omega}_2}|\lev {\beta}{X_{\alpha}}|=
{\omega}_2\cdot f({\beta})=f({\beta}).
\end{equation}
\end{proof}

If
${\alpha}$ is any ordinal, a subset $L\subs {\alpha}$ is called {\em
${\kappa}$-closed in ${\alpha}$}, where ${\kappa}$ is an infinite
cardinal, iff $\sup\<{\alpha}_i:i<{\kappa}\>\in L\cup\{{\alpha}\}$
for each increasing sequence $\<{\alpha}_i:i<{\kappa}\>\in
{}^{\kappa}L$.
The set $L$ is {\em
$<{\lambda}$-closed in ${\alpha}$} provided it is ${\kappa}$-closed in
${\alpha}$ for each cardinal ${\kappa}<{\lambda}$.
We say  that  $L$ is  {\em successor closed in
${\alpha}$} if ${\beta}+1\in L\cup\{{\alpha}\}$ for all ${\beta}\in
L$.

For a cardinal ${\lambda}$  and ordinal ${\delta}<{\lambda}^{++}$ 
we define $\dlaseq {\lambda}{\delta}$ as follows:
if
${\lambda}={\omega}$,
\begin{displaymath}
\dlaseq {\omega}{\delta}=\{f\in
{}^{\delta}\{{\omega},{\omega}_1\}: f(0)={\omega}\},  
\end{displaymath}
and if ${\lambda}$ is uncountable, 
\begin{multline}\notag
\dlaseq {\lambda}{\delta}=\{s\in
{}^{\delta}\{{\lambda},{\lambda}^+\}: s(0)={\lambda},\\
\irtaaa{{\lambda}}s\text{ is $<{\lambda}$-closed and successor-closed in
${\delta}$}
\}.
\end{multline}

The observation \ref{obs:union}  above left open the characterization of
$\laseq {{\omega}_1}{{\omega}_2}$ under GCH. In
\cite[Theorem 4.1]{JSW} it was proved that
 if
GCH holds  then
  \begin{equation}\notag
\laseq {{\omega}_1}{{\delta}}
\subseteq  \dlaseq {{\omega}_1}{{\delta}},
    \end{equation}
and  we have equality
for ${\delta}<{\omega}_2$.
In Theorem  \ref{tm:maingch} we show that it is consistent with GCH that
we have equality not only for ${\delta}={\omega}_2$ but
even for each ${\delta}<{\omega}_3$.

To formulate our results  we need to introduce some more notation.

We shall use the notation $\conseq {\kappa}{\alpha}$ to denote the
constant ${\kappa}$-valued sequence of length ${\alpha}$.
Let us denote the concatenation of a sequence $f$ of length $\alpha$ and a
sequence $g$ of length $\beta$ by $f \concat g$ so that the domain of
$f\concat g$ is $\alpha+\beta$ and $f \concat g(\xi)=f(\xi)$ for $\xi<\alpha$
and $f \concat g(\alpha+\xi)=g(\xi)$ for $\xi<\beta$.

\begin{definition}
An LCS space $X$ is called {\em $\laseq {\lambda}{\alpha}$-universal} iff
$\SEQ(X)\in \laseq {\lambda}{\alpha}$
and for each sequence $s\in \laseq {\lambda}{\alpha}$
there is an open subspace $Y$ of $X$ with
$\SEQ(Y)=s$.
\end{definition}

In this paper we prove the following result:
\begin{theorem}\label{tm:maingch}
If  ${\kappa}$ is an uncountable  regular cardinal with
${\kappa}^{<{\kappa}}={\kappa}$ and $2^{\kappa}={\kappa}^+$ then
 for each ${\delta}<{\kappa}^{++}$ there is
a ${\kappa}$-complete ${\kappa}^+$-c.c poset $P$ of cardinality
${\kappa}^+$ such that
in $V^P$
  \begin{equation}\notag
\laseq {{\kappa}}{{\delta}}
=\dlaseq {{\kappa}}{{\delta}}
  \end{equation}
and there is a
$\laseq {\oo}{{\delta}}$-universal LCS space.
\end{theorem}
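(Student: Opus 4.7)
The plan is to construct, by a $\kappa$-complete $\kappa^+$-c.c.\ forcing $P$ of cardinality $\kappa^+$, an LCS space $X$ whose underlying set is contained in $\delta\times\kappa^+$ with the $\alpha^{\text{th}}$ Cantor--Bendixson level sitting inside $\{\alpha\}\times\kappa^+$, and which is $\laseq{\kappa}{\delta}$-universal. Since $|P|=\kappa^+$, the assumptions $\kappa^{<\kappa}=\kappa$ and $2^{\kappa}=\kappa^+$, together with $\kappa$-completeness and the $\kappa^+$-c.c., guarantee that $P$ preserves cofinalities and GCH. The inclusion $\laseq{\kappa}{\delta}\subseteq\dlaseq{\kappa}{\delta}$ holds in $V^P$ by the natural generalization to regular uncountable $\kappa$ of the upper-bound theorem \cite[Theorem 4.1]{JSW}, while the reverse inclusion follows from the existence of the $\laseq{\kappa}{\delta}$-universal space.

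Forcing conditions will be partial LCS structures of size $<\kappa$ on subsets of $\delta\times\kappa^+$. Concretely, a condition $p$ consists of a small set $A_p\subseteq\delta\times\kappa^+$ together with, for each $u=(\alpha,\xi)\in A_p$ with $\alpha>0$, an assignment specifying, for every $\beta<\alpha$, a small subset $N_p(u,\beta)\subseteq(\{\beta\}\times\kappa^+)\cap A_p$ that encodes the trace on level $\beta$ of a designated clopen compact neighborhood of $u$. Compatibility of $p$ and $q$ requires that these traces agree on the common part and that their union remains a coherent partial LCS structure with the prescribed local compactness and Hausdorff separation. To build universality into the generic object, each condition will also carry, for $<\kappa$ many targets $s\in\dlaseq{\kappa}{\delta}$, a partial ``open subspace'' $Y^s_p\subseteq A_p$ whose level-by-level cardinalities are consistent with $s$ and whose designated neighborhoods stay within $Y^s_p$.

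The $\kappa$-completeness is immediate from the size bound on conditions. For the $\kappa^+$-c.c., any family of $\kappa^+$ conditions is thinned via the $\Delta$-system lemma --- applicable because $\kappa^{<\kappa}=\kappa$ bounds both the number of possible roots and the number of ``petal shapes'' --- to two conditions sharing a common root and matching type, which are then amalgamated into a common extension by combining their neighborhood data. Routine density arguments force each point of $\delta\times\kappa^+$ into $A_p$ and thereby ensure that in $V^P$ the generic $X$ realizes $\{\alpha\}\times\kappa^+=I_\alpha(X)$ for every $\alpha<\delta$.

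The main obstacle is the verification of universality: given a name for an arbitrary $s\in\dlaseq{\kappa}{\delta}$ in $V^P$, one must produce an open $Y\subseteq X$ with $\SEQ(Y)=s$. The approach is to show, by density, that every condition $p$ extends to one carrying a partial $Y^s$ compatible with its existing neighborhood data, and then to read $Y$ off the generic filter. The delicacy lies in respecting the $<\kappa$-closedness and successor-closedness of $s^{-1}\{\kappa\}$: at an ordinal $\alpha\in s^{-1}\{\kappa^+\}$, necessarily of cofinality $\kappa$, the chosen clopen neighborhoods of the $\kappa^+$ many reserved points at level $\alpha$ must draw coherently from the $\kappa$-sized lower levels in $s^{-1}\{\kappa\}$, and the bookkeeping that ensures these choices remain amalgamable across all extension steps is the technical core of the argument and the place where the hypotheses $\kappa^{<\kappa}=\kappa$ and $2^{\kappa}=\kappa^+$ are fully exploited.
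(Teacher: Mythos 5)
There is a genuine gap, and it is precisely the obstruction that the introduction of the paper is at pains to explain. Your universality strategy is to have each condition carry partial open subspaces $Y^s_p$ for designated targets $s\in\dlaseq{\kappa}{\delta}$ and to catch an arbitrary $s$ by a density argument. But when $|\delta|=\kappa^+$ we have $|\dlaseq{\kappa}{\delta}|=\kappa^{++}$, while your poset must have cardinality $\kappa^+$ (this is what makes $2^{\kappa}=\kappa^+$ survive). A condition that names even one full target $s$ codes a function of length $\delta$, so a poset whose conditions designate targets from $\dlaseq{\kappa}{\delta}$ has size at least $\kappa^{++}$; and if conditions only record $<\kappa$-sized fragments of targets, then among the $\kappa^{++}$ many $s$ at most $\kappa^+$ can ever be attached to a coherent generic thread, so the sets ``$p$ carries a partial $Y^s$'' cannot all be dense (most are empty). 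Worse, the equality $\laseq{\kappa}{\delta}=\dlaseq{\kappa}{\delta}$ must hold for $\dlaseq{\kappa}{\delta}$ as computed in $V^P$, which may contain sequences $s$ not in $V$; ``given a name for $s$, extend conditions carrying $Y^s$'' is not a ground-model density argument and does not let you read a single open $Y$ off the generic filter. (Relatedly, your reduction of $\dlaseq{\kappa}{\delta}\subseteq\laseq{\kappa}{\delta}$ to ``the existence of a $\laseq{\kappa}{\delta}$-universal space'' is circular: universality only speaks about sequences already known to lie in $\laseq{\kappa}{\delta}$.)

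The paper escapes this by not bookkeeping targets at all. It forces one structured object, an $\ltd$-good space: a fixed partition $X=Y\cup^{*}\bigcup^{*}\{Y_{\zeta}:\zeta\in\ltd\}$, with $\ltd=\{\zeta<\delta:\cf(\zeta)\in\{\kappa,\kappa^+\}\}$ of size $\le\kappa^+$, such that $\SEQ(Y)=\conseq{\kappa}{\delta}$ and $\SEQ(Y\cup Y_{\zeta})=\conseq{\kappa}{\zeta}\concat\conseq{\kappa^+}{\delta-\zeta}$. Then a purely ZFC argument (Proposition \ref{pr:ltd}), applied inside $V^P$, realizes \emph{every} $s\in\dlaseq{\kappa}{\delta}$ of the extension, new or old, as the open subspace $Z=Y\cup\bigcup\{\lev{<f({\zeta})}{Y\cup Y_{\zeta}}:\zeta\in J\}$, using exactly the $<\kappa$-closedness and successor-closedness of $s^{-1}\{\kappa\}$ to locate, for each $\alpha$ with $s(\alpha)=\kappa^+$, an appropriate $\zeta_\alpha\in\ltd$. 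Your proposal contains no mechanism playing the role of this fixed, $\kappa^+$-sized family of ``switch-on points'' $Y_\zeta$. I would also caution that the amalgamation step you call routine is the technical heart of the construction: to keep the thin blocks thin while making the full levels large, the conditions must satisfy orbit-type restrictions on infima, and compatible pairs are amalgamated only after thinning to ``good'' conditions and adding auxiliary points to serve as infima and witnesses for interpolation; a bare $\Delta$-system argument on neighborhood traces will not produce a common extension satisfying such constraints.
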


How do the universal spaces come into the picture?
The first  idea to prove the consistency of
$\laseq {\lambda}{\alpha}=\dlaseq {\lambda}{\alpha}$ is to try to carry out an iterated forcing. For each $f\in \dlaseq {\lambda}{\alpha}$
we can try to find a poset $P_f$ such that
\begin{displaymath}
1_{P_f}\Vdash \text{There is an LCS space $X_f$ with
cardinal sequence $f$.}
\end{displaymath}
Since typically $|X_f|={\lambda}^+$, if we want to preserve the cardinals
and $CGH$ we should try to find a ${\lambda}$-complete, ${\lambda}^+$-c.c.
poset $P_f$ of cardinality ${\lambda}^+$.
In this case forcing with $P_f$ introduces
${\lambda}^+$ new subsets of ${\lambda}$  because $P_f$ has cardinality
${\lambda}^+$. However
$|\dlaseq {\lambda}{\alpha}|={\lambda}^{++}$! So the length of the iteration is at least ${\lambda}^{++}$, hence in the final model the cardinal
${\lambda}$ will have ${\lambda}^+\cdot {\lambda}^{++}={\lambda}^{++}$ many
new subsets, i.e. $2^{\lambda}>{\lambda}^+$.

A $\laseq {\lambda}{\delta}$-universal space has cardinality ${\lambda}^+$
so we  may hope that there is  a ${\lambda}$-complete, ${\lambda}^+$-c.c.
poset $P$ of cardinality ${\lambda}^+$ such  that
$V^P$ contains a $\laseq {\lambda}{\delta}$-universal space.
In this case $(2^{\lambda})^{V^P}\le
((|P|^{{\lambda}})^{\lambda})^{V}={\lambda}^+$.
So in the generic extension we might have $GCH$.

In this paper, we shall use the notion of a universal LCS space in order to
prove Theorem \ref{tm:maingch}.  Further constructions of universal LCS spaces will be
carried out in \cite{MS}.

\begin{problem}
Assume that $s$ is a sequence of cardinals of length ${\alpha}$,
$s\notin \aseq {\alpha}$. Is it possible that there  is a
$|{\alpha}|^+$-Baire ($|{\alpha}|^+$-complete) poset $P$ such that
$s\in \aseq {\alpha}$ in $V^P$?
\end{problem}

\newcommand{\lt}{{\mathcal L}_{\oo}}
\newcommand{\ltd}{\lt^{\delta}}
\noindent
For an ordinal ${\delta}<\ooth$
let $\ltd=\{{\alpha}<{\delta}:\cf({\alpha})\in \{\oo,\oot\}\}$.

\begin{definition}
An LCS space $X$ is called $\ltd$-good  iff
$X$ has a partition $X=Y\cup^*\bigcup^*\{Y_{\zeta}:{\zeta}\in \ltd\}$
such that
\begin{enumerate}[(1)]
\item $Y$ is an open subspace of $X$, $\SEQ(Y)=\conseq {\oo}{\delta}$,
\item $Y\cup Y_{\zeta}$ is an open subspace of $X$ with
 $\SEQ(Y\cup Y_{\zeta})=\conseq {\oo}{\zeta}\concat
\conseq {\oot}{\delta-{\zeta}}$.
\end{enumerate}
\end{definition}

Theorem \ref{tm:maingch} follows immediately from
Theorem \ref{tm:maingch2} and Proposition \ref{pr:ltd} below.

\begin{theorem}\label{tm:maingch2}
If  $\oo$ is an uncountable  regular cardinal with
${\oo}^{<{\oo}}={\oo}$
then for each ${\delta}<\ooth$ there is
a $\oo$-complete $\oot$-c.c poset $\pcal$ of cardinality $\oot$ such that
in $V^\pcal$
there is an $\ltd$-good space.
\end{theorem}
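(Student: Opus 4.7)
The plan is to force with a $\kappa$-complete, $\kappa^+$-c.c.\ poset $\pcal$ of size $\kappa^+$ whose generic codes a $\ltd$-good space. I would fix in advance an underlying set
\[
Z \;=\; (\kappa\times\delta)\ \cup\ \bigcup_{\zeta\in\ltd}\bigl(\{\zeta\}\times\kappa^+\times[\zeta,\delta)\bigr),
\]
viewing the first component as the ``base'' $Y$ and, for each $\zeta$, the corresponding wing as $Y_\zeta$, with the Cantor--Bendixson rank of any indexed point equal to its level-coordinate. A condition $p$ would be a pair $(A_p,U_p)$ with $A_p\subseteq Z$ of size $<\kappa$, and $U_p$ assigning to each $x\in A_p$ a proposed compact-open neighborhood $U_p(x)\subseteq A_p$ satisfying the usual LCS axioms (containment in ranks $\le\rho(x)$, finiteness on each lower level, and descent coherence), together with the crucial \emph{openness clause}: $U_p(x)\subseteq Y$ if $x\in Y$, and $U_p(x)\subseteq Y\cup Y_\zeta$ if $x\in Y_\zeta$. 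The order is end-extension: $p\le q$ iff $A_p\supseteq A_q$ and $U_p(x)\cap A_q = U_q(x)$ for every $x\in A_q$.

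The basic properties are routine. $\kappa$-completeness follows from the size bound on conditions and the regularity of $\kappa$. The bound $|\pcal|=\kappa^+$ follows from $|Z|^{<\kappa}\cdot 2^{<\kappa}=\kappa^+$, using $\kappa^{<\kappa}=\kappa$ and $|Z|\le\kappa^+$ (since $\delta<\kappa^{++}$). The $\kappa^+$-c.c.\ comes from a standard $\Delta$-system argument: given $\kappa^+$ conditions, pass to $\kappa^+$ of them whose $A_p$'s form a $\Delta$-system with root $R$ and whose structures on $R$ coincide; then any two such are compatible via the obvious union, and the openness clause forbids any compact-open neighborhood from linking points of different wings $Y_\zeta$ and $Y_{\zeta'}$ for $\zeta\ne\zeta'$, so no cross-wing edges can be demanded by the LCS axioms and the amalgamation is automatically legal.

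In $V^{\pcal}$ let $X$ be the space on $\bigcup\{A_p:p\in G\}$; density of the point-realization arguments yields $|I_\alpha(Y)|=\kappa$ for each $\alpha<\delta$ and $|I_\alpha(Y\cup Y_\zeta)|$ equal to $\kappa$ for $\alpha<\zeta$ and to $\kappa^+$ for $\alpha\in[\zeta,\delta)$, while openness of $Y$ and of each $Y\cup Y_\zeta$ passes from the per-condition clause to the generic union. The main obstacle is the density step for realizing a $Y_\zeta$-point at the bottom level $\alpha=\zeta$: one must build a compact-open neighborhood inside $Y$ that is finite on each $\beta<\zeta$ yet cofinal in $[0,\zeta)$ in the appropriate sense. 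The restriction $\cf(\zeta)\in\{\kappa,\kappa^+\}$ is precisely what keeps this feasible --- a smaller cofinality would clash with the $<\kappa$-closedness already visible in the definition of $\dlaseq{\kappa}{\delta}$. Verifying that these local choices can be made so as to amalgamate in the $\kappa^+$-c.c.\ step without creating illegal cross-wing relations is the technical heart of the argument.
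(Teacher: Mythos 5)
There is a genuine gap, and it sits exactly where you placed your one-sentence disclaimer: the $\kappa^+$-c.c.\ amalgamation. Your claim that after a $\Delta$-system/isomorphism-type thinning ``any two such conditions are compatible via the obvious union'' is the step that fails, and it is not a routine detail --- it is the entire content of the paper's Section 2. The problematic interaction is not between different wings $Y_\zeta$ and $Y_{\zeta'}$ (your openness clause does handle that), but between a single wing and the base, through the root: two conditions $p,q$ may contain points $x\in A_p\setminus R$, $y\in A_q\setminus R$ of the same wing $Y_\zeta$ (there are $\kappa^+$ many candidates, so the root cannot absorb them) whose neighborhoods both trace onto an infinite subset of the root inside $Y$. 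In the union, $U(x)\cap U(y)$ is then an infinite subset of the root that need not be a finite union of neighborhoods of points of the amalgam, so local compactness / the infimum requirement fails; one must \emph{add new points} to the amalgam to serve as meets and as interpolation witnesses, and one must have room to place them. The paper does precisely this: the amalgam is $A_p\cup A_q\cup Y\cup U\cup U'$ with new elements $y_x,u_x$, and their $\pi$-values $\beta_x$ are chosen inside carefully prescribed ``orbits'' attached to a fixed cofinal tree of ordinal intervals $\mathbb I$; conditions carry the constraints (P5)--(P6) tying meets and interpolation points to these orbits, and the thinning must also arrange that all conditions are ``good'' before amalgamation is possible. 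None of this machinery is present in your poset, and without an analogue of it the union is simply not a condition.

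Moreover, the paper explains why this cannot be finessed: with $\kappa^+$ many wing points sitting above a base $Y$ all of whose levels have size $\kappa$, the pairwise meets of their neighborhoods must be spread out cofinally below $\zeta$ --- this is why the block orbit of $\langle\zeta,\eta\rangle$ grows with $\eta<\kappa^+$ along $\operatorname{E}(J(\zeta))$ when $\cf(\zeta)=\kappa^+$. If the meets were allowed to accumulate (as an unconstrained union-amalgamation would permit, or as the naive orbit of \cite{M} forces), some fixed level of $Y$ would be pushed up to size $\kappa^+$, destroying $\SEQ(Y)=\conseq{\kappa}{\delta}$; this is exactly the obstruction the paper illustrates with $\langle\kappa\rangle_{\kappa^+}\concat\langle\kappa^+\rangle$. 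So your proposal is right about the ambient set-up (blocks for $Y$ and the $Y_\zeta$'s, size and completeness of the poset, and the density step of Lemma-type ``add a point of prescribed rank below a given point''), but the chain-condition argument --- which is the theorem's technical heart and occupies essentially all of the paper's proof --- is missing rather than merely deferred.
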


\begin{proposition}\label{pr:ltd}
Let ${\kappa}$ be an uncountable regular cardinal, 
${\delta}<{\kappa}^{++}$ and  $X$ be an $\ltd$-good space. 
Then for each $s\in \dlaseq{\oo}{\delta}$ 
there is an open subspace $Z$ of $X$
with $\SEQ(Z)=s$. Especially, under GCH an $\ltd$-good space
is $\laseq {\oo}{\delta}$-universal.
\end{proposition}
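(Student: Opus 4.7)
The plan is to read off from $s\in \dlaseq{\kappa}{\delta}$ which bumps $Y_\zeta$ must be switched on, and to take $Z$ to be $Y$ together with a height-truncated piece of each switched-on bump. Write $L:=\irtaaa{\kappa}s$ and $M:=\delta\setminus L$, and decompose $M$ into its maximal subintervals $M=\bigsqcup_{\xi<\theta}[\gamma_\xi,\gamma'_\xi)$. The pivotal combinatorial step is the claim that $\gamma_\xi\in\ltd$ for every $\xi<\theta$. Since $s(0)=\kappa$ forces $0\in L$, we have $\gamma_\xi\neq 0$; and if $\gamma_\xi=\beta+1$ were a successor, then maximality of the interval would give $\beta\in L$, and successor-closedness of $L$ would put $\gamma_\xi$ back in $L$, a contradiction. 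Hence $\gamma_\xi$ is a nonzero limit ordinal, and maximality further forces $L\cap\gamma_\xi$ to be unbounded in $\gamma_\xi$. The $<\kappa$-closedness of $L$ now rules out $\cf(\gamma_\xi)<\kappa$, while $\gamma_\xi<\delta<\kappa^{++}$ forces $\cf(\gamma_\xi)\le\kappa^+$, so $\cf(\gamma_\xi)\in\{\kappa,\kappa^+\}$, i.e.\ $\gamma_\xi\in\ltd$.

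For each $\xi<\theta$ I would set
\[
V_\xi:=(Y\cup Y_{\gamma_\xi})\cap\bigcup_{\beta<\gamma'_\xi}\lev{\beta}{X},
\]
which is open in $X$ as the intersection of the open subspace $Y\cup Y_{\gamma_\xi}$ with the complement of the closed $\gamma'_\xi$-th Cantor--Bendixson derivative. Clause~(2) in the definition of $\ltd$-good then yields $\SEQ(V_\xi)=\conseq{\kappa}{\gamma_\xi}\concat\conseq{\kappa^+}{\gamma'_\xi-\gamma_\xi}$. Define
\[
Z:=Y\cup\bigcup_{\xi<\theta}V_\xi,
\]
open in $X$. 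For $\alpha\in M$, $\alpha$ lies in a unique interval $[\gamma_\xi,\gamma'_\xi)$, and $V_\xi\cap\lev{\alpha}{X}=(Y\cup Y_{\gamma_\xi})\cap\lev{\alpha}{X}$ has cardinality $\kappa^+$, so $|Z\cap\lev{\alpha}{X}|=\kappa^+=s(\alpha)$. For $\alpha\in L$, every $\xi$ with $\alpha<\gamma'_\xi$ must satisfy $\alpha<\gamma_\xi$, so each $V_\xi\cap\lev{\alpha}{X}$ sits inside the $\kappa$-sized set $(Y\cup Y_{\gamma_\xi})\cap\lev{\alpha}{X}$; provided the partition is localized in the sense that $Y_\zeta$ is supported on levels $\ge\zeta$, all these contributions collapse into $Y\cap\lev{\alpha}{X}$, and $|Z\cap\lev{\alpha}{X}|=\kappa=s(\alpha)$.

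The main obstacle is the combinatorial step in the first paragraph: the whole argument hinges on the left endpoints of the maximal $M$-intervals landing precisely in the index set $\ltd$ of available bumps, and this alignment is exactly what the $<\kappa$-closedness and successor-closedness conditions in the definition of $\dlaseq{\kappa}{\delta}$ encode. A secondary technical point is the cardinality bookkeeping for $\alpha\in L$, which relies on $Y_\zeta$ being concentrated on levels $\ge\zeta$; this localization is natural in the universal construction produced by Theorem~\ref{tm:maingch2} and should be either folded into the definition of $\ltd$-goodness or verified there, because without it the disjoint union $\bigcup_{\xi:\alpha<\gamma_\xi}Y_{\gamma_\xi}\cap\lev{\alpha}{X}$ could in principle have total size up to $\kappa^+$ when there are $\kappa^+$ many $M$-intervals above $\alpha$. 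Once this is in place, the ``Especially'' clause is immediate: under GCH the inclusion $\laseq{\kappa}{\delta}\subseteq\dlaseq{\kappa}{\delta}$ from \cite{JSW} shows that realizing every $s\in\dlaseq{\kappa}{\delta}$ as an open subspace of $X$ automatically realizes every $s\in\laseq{\kappa}{\delta}$, so the $\ltd$-good space $X$ (whose own cardinal sequence takes values in $\{\kappa,\kappa^+\}$ starting with $\kappa$ and hence lies in $\laseq{\kappa}{\delta}$) is $\laseq{\kappa}{\delta}$-universal.
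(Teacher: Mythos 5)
Your construction is essentially the paper's own proof: the left endpoints $\gamma_\xi$ of the maximal subintervals of $s^{-1}\{\kappa^+\}$ are exactly the ordinals $\zeta_\alpha$ the paper shows to lie in $J=s^{-1}\{\kappa^+\}\cap\ltd$ (the paper lets $J$ contain interior points of these intervals too, but those pieces are redundant), your argument via $s(0)=\kappa$, successor-closedness and $<\kappa$-closedness is the same, and your $Z=Y\cup\bigcup_\xi V_\xi$ coincides, up to those redundant pieces, with the paper's $Z=Y\cup\bigcup\{\lev{<f(\zeta)}{Y\cup Y_{\zeta}}:\zeta\in J\}$. The localization point you flag (that $Y_\zeta$ meets no level below $\zeta$, needed so that levels with $s(\alpha)=\kappa$ stay of size $\kappa$) is precisely what the paper's displayed computation absorbs silently when it restricts the union to $\zeta\le\alpha<f(\zeta)$, and it does hold in the space built for Theorem \ref{tm:maingch2}, so your proof matches the paper's in both substance and level of rigor.
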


\begin{proof}
Let  $J=\irtaaa{\oot}s\cap \ltd$.
For each ${\zeta}\in J$ let
\begin{equation}\notag
f({\zeta})=\min (({\delta}+1)\setm (\irtaaa{\oot}s\cup {\zeta})).
\end{equation}
Let
\begin{equation}\notag
Z=Y\cup\bigcup\{\lev {<f({\zeta})}{Y\cup Y_{\zeta}}:{\zeta}\in J\}.
\end{equation}
Since $Y\cup Y_{\zeta}$ is an open subspace of $X$ it follows that
$\lev {<f({\zeta})}{Y\cup Y_{\zeta}}$ is an open subspace of $Z$.
Hence for every ${\alpha}<{\delta}$
\begin{multline}
\lev {\alpha}Z=\lev {\alpha}Y\cup
\bigcup\{\lev {\alpha}{\lev {<f({\zeta})}{Y\cup Y_{\zeta}}}:
{\zeta}\in J\}\\=
\lev {\alpha}Y\cup
\bigcup\{\lev {\alpha}{Y\cup Y_{\zeta}}:
{\zeta}\in J, {\zeta}\le {\alpha}<f({\zeta})\}.
\end{multline}
Since $[{\zeta}, f({\zeta}))\subs \irtaaa{\oot}s$ for ${\zeta}\in J$
it follows that if $s({\alpha})=\oo$ then
$\lev {\alpha}Z=\lev {\alpha}Y$, and so 
\begin{equation}
|\lev {\alpha}Z|= |\lev {\alpha}Y|=\oo.
\end{equation}
If  $s({\alpha})={\oot}$, 
let ${\zeta}_{\alpha}=\min\{{\zeta}\le
{\alpha}:[{\zeta},{\alpha}]\subs s^{-1}\{\oot\}\}$. Then 
${\zeta}_{\alpha}\in J$ because
$s(0)={\kappa}$ and 
$\irtaaa{\oo}s$
is $<\oo$-closed and successor-closed in
${\delta}$.
Thus ${\zeta}_{\alpha}\le {\alpha}<f({\zeta}_{\alpha})$ and so 
\begin{equation}
|\lev {\alpha}Z|\ge |\lev {\alpha}{Y\cup Y_{\zeta_{\alpha}}}|=\oot.
\end{equation}
Since $|Z|\le |X|=\oot$ we have $|\lev {\alpha}Z|=\oot$.
Thus $\SEQ(Z)=s$.
\end{proof}

Theorem \ref{tm:maingch} yields the following  characterization:

\begin{theorem}
Under GCH for  any sequence $f$ of regular cardinals of length ${\alpha}$
the following statements are equivalent:
\begin{enumerate}[(A)]
\item $f\in \aseq {\alpha}$ in some cardinal preserving and GCH-preserving
 generic-extension of the ground model.
\item  for some natural number $n$
there are infinite regular cardinals
$\lambda_0>\lambda_1>\dots>\lambda_{n-1}$  and ordinals
${\alpha}_0,\dots, {\alpha}_{n-1}$ such that
${\alpha}={\alpha}_0+\cdots+{\alpha}_{n-1}$ and $f=f_0\concat \
f_1\concat \cdots \ \concat f_{n-1}$ where each
$f_i\in\dlaseq{\lambda_i}{\alpha_i}$.
\end{enumerate}
\end{theorem}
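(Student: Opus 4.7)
The plan is to prove each direction separately, using Theorem~\ref{tm:maingch} for the forward implication (B)~$\Rightarrow$~(A) and the inclusion $\laseq{\lambda}{\delta} \subseteq \dlaseq{\lambda}{\delta}$, valid under GCH for every uncountable regular $\lambda$ (as a direct generalization of \cite[Theorem 4.1]{JSW}), for the converse (A)~$\Rightarrow$~(B).

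For (B)~$\Rightarrow$~(A), suppose $f = f_0 \concat f_1 \concat \cdots \concat f_{n-1}$ with $f_i \in \dlaseq{\lambda_i}{\alpha_i}$ and $\lambda_0 > \lambda_1 > \cdots > \lambda_{n-1}$. I would iterate the posets $P_i$ supplied by Theorem~\ref{tm:maingch} for each uncountable $\lambda_i$; each is $\lambda_i$-complete, $\lambda_i^+$-c.c., and of cardinality $\lambda_i^+$, so their iteration preserves all cardinals and GCH. In the resulting extension $V[G]$, for each uncountable $\lambda_i$ we have $\laseq{\lambda_i}{\alpha_i} = \dlaseq{\lambda_i}{\alpha_i}$, hence an LCS space $X_i$ with $\SEQ(X_i) = f_i$. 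If $\lambda_{n-1} = \omega$ (necessarily at the top block), the class $\dlaseq{\omega}{\alpha_{n-1}}$ is realized by classical ZFC+GCH constructions of LCS spaces with $\{\omega,\omega_1\}$-valued cardinal sequences. Then I would stack the $X_i$ via the standard LCS concatenation: iteratively make $X_0 \cup \cdots \cup X_i$ a clopen subspace of $X_0 \cup \cdots \cup X_{i+1}$ by equipping each point of $X_{i+1}$ with a neighborhood base of compact sets meeting cofinally many Cantor--Bendixson levels of $X_0 \cup \cdots \cup X_i$; this produces a single LCS space $X$ with $\SEQ(X) = f$.

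For (A)~$\Rightarrow$~(B), I would work inside the cardinal- and GCH-preserving extension $V[G]$ in which an LCS space $X$ witnesses $f \in \aseq{\alpha}$, and decompose $f$ greedily. Set $\lambda_0 = f(0)$, which is regular by hypothesis, and let $\alpha_0 \leq \alpha$ be the largest ordinal with $f(\beta) \geq \lambda_0$ for all $\beta < \alpha_0$, so $f_0 := f \restriction \alpha_0 \in \laseq{\lambda_0}{\alpha_0}$; by the inclusion applied in $V[G]$, $f_0 \in \dlaseq{\lambda_0}{\alpha_0}$. If $\alpha_0 < \alpha$ then by maximality $f(\alpha_0) < \lambda_0$; set $\lambda_1 = f(\alpha_0)$ and repeat the decomposition on the tail $f \restriction [\alpha_0, \alpha)$, which is the cardinal sequence of the open subspace $X \setminus \lev{<\alpha_0}{X}$. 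Since the $\lambda_i$ form a strictly descending sequence of infinite cardinals, the procedure terminates in finitely many steps and yields the required decomposition.

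The main obstacle is ensuring the JSW-type inclusion holds for every uncountable regular $\lambda$ under GCH (the excerpt states it explicitly only for $\lambda = \omega_1$); the original argument should adapt by replacing $\omega_1$-specific cardinal-arithmetic estimates with their $\lambda$-analogues, using $2^\lambda = \lambda^+$ and $\lambda^{<\lambda} = \lambda$. A secondary technicality lies in the stacking construction of (B)~$\Rightarrow$~(A): the neighborhood bases at each transition $\alpha_0 + \cdots + \alpha_i$ must be arranged so that the Cantor--Bendixson levels of $X_{i+1}$ sit at exactly the correct offsets, which is routine but requires careful bookkeeping.
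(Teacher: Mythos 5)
Your proposal follows essentially the same route as the paper: the paper proves (B)$\Rightarrow$(A) by iterating the forcings of Theorem \ref{tm:maingch} and then concatenating the resulting spaces via \cite[Lemma 2.2]{JSW}, with \cite[Theorem 9]{JW2} supplying the $\omega$-block, and it disposes of (A)$\Rightarrow$(B) by citing the GCH structure results of \cite{JSW} --- exactly the two ingredients you sketch by hand (your ``stacking'' is the content of \cite[Lemma 2.2]{JSW}, and your greedy decomposition plus the generalized inclusion $\laseq{\lambda}{\delta}\subseteq\dlaseq{\lambda}{\delta}$ is what \cite{JSW} provides). The only slips are cosmetic: the tail $f\restriction[\alpha_0,\alpha)$ is realized by the \emph{closed} subspace $X\setminus \lev{<\alpha_0}{X}$ (the $\alpha_0$-th Cantor--Bendixson derivative), not an open one, and the $\omega$-block is available in ZFC alone, no GCH needed.
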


\begin{proof}
 (A) clearly implies (B) by \cite{JSW}.

Assume now that (B) holds. Without loss of
generality, we may suppose that  $\lambda_{n-1} = \omega$. Since
the notion of forcing defined in Theorem \ref{tm:maingch} 
preserves GCH, we can carry out a cardinal-preserving and
GCH-preserving 
iterated
forcing of length $n-1$, $\<P_m:m < n-1\>$,  such that for $m <
n-1$

\begin{equation}\notag
V^{P_m}\models
\laseq{\lambda_m}{\alpha_m}=\dlaseq{\lambda_m}{\alpha_m}.
\end{equation}
Put $k=n-2$, $\beta = {\alpha}_0+\cdots+{\alpha}_k$ and
$g=f_0\concat f_1\concat \cdots \ \concat f_k$. Since $f_m\in
\dlaseq{\lambda_m}{\alpha_m}\cap V$, in $V^{P_k}$ we have $f_m\in
\laseq{\lambda_m}{\alpha_m}$ for each $m<n-1$. Hence in $V^{P_k}$
we have $g\in \aseq {\beta}$ by  \cite[Lemma 2.2]{JSW}. Also, by using
 \cite[Theorem 9]{JW2}, we infer that $f_{n-1}\in \aseq {\alpha_{n-1}}$ in
ZFC. Then as $f = g \concat f_{n-1}$, in $V^{P_k}$ we have  $f\in
\aseq {\alpha}$ again by  \cite[Lemma 2.2]{JSW}.
\end{proof}

\begin{problem}
{\rm \bf (1)} Are  (A) and (B) below equivalent under GCH 
for every sequence $f $of regular cardinals?
\begin{enumerate}[(1)]
\item[(A)] $f\in \aseq {\alpha}$.
\item[(B)]  for some natural number $n$
there are infinite regular cardinals
$\lambda_0>\lambda_1>\dots>\lambda_{n-1}$  and ordinals
${\alpha}_0,\dots ,{\alpha}_{n-1}$ such that
${\alpha}={\alpha}_0+\cdots+{\alpha}_{n-1}$ and $f=f_0\concat \
f_1\concat \cdots \ \concat f_{n-1}$ where each
$f_i\in\dlaseq{\lambda_i}{\alpha_i}$.
\end{enumerate}
{\rm\bf (2)} Is it consistent with GCH that (A) and (B) above are equivalent
for every sequence of regular cardinals?
\end{problem}

Juh\'asz and  Weiss proved in \cite{JW} that
$\conseq {{\omega}}{\delta}\in \aseq {\delta}$ for each
${\delta}<{\omega}_2$.

Also, it was shown in \cite{M} that for every specific regular
cardinal $\kappa$ it is consistent that $\langle \kappa
\rangle_{\delta}\in \ccal (\delta)$ for each $\delta
<\kappa^{++}$. However, the following problem is open:

\begin{problem}
Is it consistent with $GCH$ that
$\conseq {{\omega}_1}{\delta}\in \aseq {\delta}$ for each
${\delta}<{\omega}_3$?
\end{problem}

\section{Proof of theorem \ref{tm:maingch2}}

This section is devoted to the proof of Theorem 1.6, so $\oo$ is an
uncountable regular cardinal with ${\oo}^{<{\oo}}={\oo}$, and ${\delta}<\ooth$
is an ordinal.

If ${\alpha}\le {\beta}$ are ordinals let
\begin{equation}
[{\alpha},{\beta})=\{{\gamma}:{\alpha}\le {\gamma}<{\beta}\}.
\end{equation}
We say that $I$ is an {\em ordinal interval} iff there are
ordinals ${\alpha}$ and ${\beta}$ with $I=[{\alpha},{\beta})$.
Write $\bott I={\alpha}$ and $\topp I={\beta}$.

If $I=[{\alpha},{\beta})$ is an ordinal interval let $\incof
I=\{\veps I{\nu}:{\nu}<\cf({\beta})\}$ be a cofinal closed subset
of $I$ having order type $\cf {\beta}$ with ${\alpha} = \veps
I{0}$ and put
\begin{equation}
\intpart I=\{[\veps I{\nu},\veps I{\nu+1}):{\nu}<\cf{\beta}\}
\end{equation}
 provided ${\beta}$ is a limit ordinal,
and let $\incof I =\{{\alpha},{\beta}'\}$
 and put
\begin{equation}
\intpart I=\{[{\alpha},{\beta}'),\{{\beta}'\}\}
  \end{equation}
provided ${\beta}={\beta}'+1$.

\vspace{1mm} Define $\{\ical_n:n<{\omega}\}$ as follows:
\begin{equation}
\ical_0=\{[0,{\delta})\} \text{ and }
\ical_{n+1}=\bigcup\{\intpart I:I\in \ical_n\}.
\end{equation}
Put $\mathbb I=\bigcup\{\ical_n:n<{\omega}\}$. Note that $\mathbb
I$ is a {\em cofinal tree of intervals} in the sense defined in \cite{M}.
Then, for each ${\alpha}<{\delta}$ we define
\begin{equation}
\nn {\alpha}=\min\{n:\exists I\in \ical_n \mbox{ with } \bott
I={\alpha}\},
\end{equation}
and for each  ${\alpha}<{\delta}$ and $n<{\omega}$ we define
\begin{equation}
\contint {\alpha}n\in \ical_n \text{ such that } {\alpha}\in
\contint {\alpha}n.
\end{equation}

\begin{proposition}
\label{Proposition-2.1} Assume that $\zeta <
\delta$ is a limit ordinal.  Then, there is a $j(\zeta) \in
\omega$ and an interval $J({\zeta})\in \ical_{j(\zeta)}$ such that
$\zeta$ is a limit point of $E(J(\zeta))$. Also, we have $\nn
{\zeta}-1\le j({\zeta})\le\nn {\zeta}$, and $j(\zeta) = n(\zeta)$
if $cf(\zeta) = \oot$.
\end{proposition}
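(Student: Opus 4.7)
The plan is to trace the descending chain of intervals $\contint{\zeta}{n}=[\alpha_n,\beta_n)$ through $\zeta$ and extract $J(\zeta)$ and $j(\zeta)$ from the step at which $\zeta$ first becomes a bottom. First observe that $\nn{\zeta}$ is finite: whenever $\alpha_n<\zeta$, the top strictly decreases, since $\beta_{n+1}=\veps{\contint{\zeta}{n}}{\nu+1}<\beta_n$ in the limit-$\beta_n$ case, while in the successor case $\beta_n=\beta'+1$ the singleton piece $\{\beta'\}$ cannot contain $\zeta$ (as $\zeta$ is a limit), so $\beta_{n+1}=\beta'<\beta_n$. Since strictly decreasing ordinal sequences are finite, $\alpha_m=\zeta$ for some finite $m$, which we set to be $\nn{\zeta}$. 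The same reasoning rules out $\beta_{m-1}$ being a successor (else the only new bottom at level $m$ would be $\beta_{m-1}-1$, a successor), so $\incof{\contint{\zeta}{m-1}}=\{\varepsilon_\nu:\nu<\cf(\beta_{m-1})\}$ (where $\varepsilon_\nu:=\veps{\contint{\zeta}{m-1}}{\nu}$) is closed cofinal in $\contint{\zeta}{m-1}$, and $\zeta=\varepsilon_{\nu_0}$ for some $1\le\nu_0<\cf(\beta_{m-1})$, the lower bound coming from $\alpha_{m-1}=\varepsilon_0<\zeta$.

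Now branch on the order type of $\nu_0$. If $\nu_0$ is a limit, closedness of $\incof{\contint{\zeta}{m-1}}$ makes $\nu\mapsto\varepsilon_\nu$ continuous, so $\zeta=\sup_{\xi<\nu_0}\varepsilon_\xi$ is a limit point of $\incof{\contint{\zeta}{m-1}}$; set $J(\zeta):=\contint{\zeta}{m-1}$ and $j(\zeta):=m-1$. If instead $\nu_0=\mu+1$, set $J(\zeta):=[\varepsilon_\mu,\varepsilon_{\mu+1})=[\varepsilon_\mu,\zeta)\in\intpart{\contint{\zeta}{m-1}}\subseteq\ical_m$ and $j(\zeta):=m$; since $\topp{J(\zeta)}=\zeta$ is a limit ordinal, $\incof{J(\zeta)}$ is cofinal in $[\varepsilon_\mu,\zeta)$ with supremum $\zeta$, so $\zeta$ is a limit point of $\incof{J(\zeta)}$. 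In both cases $\nn{\zeta}-1\le j(\zeta)\le\nn{\zeta}$.

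The final clause drops out from a cofinality bound: continuity gives $\cf(\zeta)=\cf(\varepsilon_{\nu_0})=\cf(\nu_0)$ in the limit-$\nu_0$ case, and $\cf(\nu_0)\le\nu_0<\cf(\beta_{m-1})\le\oot$ yields $\cf(\zeta)\le\oo$. Contrapositively, $\cf(\zeta)=\oot$ rules out the limit-$\nu_0$ case and forces $j(\zeta)=\nn{\zeta}$. The point requiring the most attention is this bound, which leverages $\delta<\ooth$ (keeping all tops of cofinality at most $\oot$) together with the fact that a limit ordinal $\nu_0$ strictly below a regular cardinal $\le\oot$ must itself have cofinality $\le\oo$; everything else is direct case analysis once one has locked in the correct refinement level $m-1$.
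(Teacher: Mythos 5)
Your case analysis rests on the claim that the top $\beta_{m-1}$ of $\contint{\zeta}{\nn{\zeta}-1}$ cannot be a successor ordinal, justified by asserting that otherwise the only new bottom at level $\nn{\zeta}$ would be $\beta_{m-1}-1$, ``a successor''. That assertion is false: nothing forces the predecessor $\beta'$ of a successor top $\beta'+1$ to be a successor ordinal, so the singleton piece $\{\beta'\}$ can perfectly well contain the limit ordinal $\zeta$. The paper's own Example \ref{Example-2.2} exhibits exactly this: with $\delta=\omega_2\cdot\omega_2+1$ and $\zeta=\omega_2\cdot\omega_2$ one has $\nn{\zeta}=1$, the interval $\contint{\zeta}{0}=[0,\delta)$ has successor top, and $\zeta$ lies in the singleton piece $\{\omega_2\cdot\omega_2\}\in\ical_1$ (a smaller instance: $\delta=\omega+1$, $\zeta=\omega$). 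So your proof omits a case that genuinely occurs, and this case is also relevant to the last clause, since there $\cf(\zeta)$ may well be $\kappa^+$. The same false assertion appears in your finiteness argument for $\nn{\zeta}$; there the conclusion survives, because when $\zeta=\beta'$ the bottom of $\contint{\zeta}{n+1}$ equals $\zeta$ even though the top does not drop, but the justification as written is wrong.

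The gap is repairable along the lines of your successor-$\nu_0$ branch: if $\beta_{m-1}=\beta'+1$ and $\zeta=\beta'$, put $J(\zeta)=[\alpha_{m-1},\zeta)\in\intpart{\contint{\zeta}{m-1}}\subseteq\ical_{\nn{\zeta}}$ and $j(\zeta)=\nn{\zeta}$; since $\zeta$ is a limit ordinal, $\incof{J(\zeta)}$ is cofinal in $[\alpha_{m-1},\zeta)$ and omits $\zeta$, so $\zeta$ is a limit point of it, and both the bound $\nn{\zeta}-1\le j(\zeta)\le\nn{\zeta}$ and the $\cf(\zeta)=\kappa^+$ clause are then immediate. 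With that case added, your argument is sound and essentially the paper's: the paper splits on whether some $I\in\ical_{\nn{\zeta}}$ has $\topp I=\zeta$, a dichotomy that absorbs both your successor-$\nu_0$ branch and the missing successor-top case, and for the final clause it uses the cardinality bound $|\incof{\contint{\zeta}{\nn{\zeta}-1}}\cap\zeta|\le\kappa$ where you use the equivalent cofinality computation $\cf(\zeta)=\cf(\nu_0)\le\kappa$.
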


\begin{proof}
Clearly $j(\zeta)$ and $\jint {\zeta}$ are unique if defined.

If there is an $I \in \ical_{\nn {\zeta}}$ with $\topp I={\zeta}$
then $J(\zeta)=I$, and so $j(\zeta)=\nn {\zeta}$. If there is no
such $I$, then ${\zeta}$ is a limit point of $\incof{\contint
{\zeta}{\nn {\zeta}-1}}$, so $J(\zeta)= \contint {\zeta}{\nn
{\zeta}-1}$ and $j(\zeta)=\nn {\zeta}-1$.

Assume now that $\cf({\zeta})=\oot$. Then
 ${\zeta}\in \incof{\contint {{\zeta}}{\nn {\zeta}-1}}$,
but $|\incof{\contint {{\zeta}}{\nn {\zeta}-1}}\cap {\zeta}|\le
\oo$, so ${\zeta}$ can not be a limit point of $\incof{\contint
{{\zeta}}{\nn {\zeta}-1}}$. Therefore, it has a predecessor
${\xi}$ in $\incof{\contint {{\zeta}}{\nn {\zeta}-1}}$, i.e
$[{\xi},{\zeta}) \in \ical_{\nn {\zeta}}$, and so
$J(\zeta)=[{\xi},{\zeta})$ and
 $j(\zeta)=\nn {\zeta}$.
\end{proof}

\begin{example}
\label{Example-2.2}
Put $\delta
= \omega_2\cdot \omega_2 + 1$. We define

\vspace{1mm} $E([0,\delta)) = \{0,{\omega}_2\cdot {\omega}_2\}$,

\vspace{1mm} $E([0,{\omega}_2\cdot {\omega}_2)) =
\{{\omega}_2\cdot \xi:0\leq \xi <{\omega}_2\}$,

\vspace{1mm} $E([{\omega}_2\cdot \xi,{\omega}_2\cdot (\xi + 1))) =
\{\zeta: {\omega}_2\cdot \xi \leq \zeta < {\omega}_2\cdot (\xi +
1)\}$,

\vspace{1mm} $E(\{\zeta\}) = \{\zeta\}$ for each $\zeta \leq
{\omega}_2\cdot {\omega}_2$.

\vspace{1mm} Then, we have $\nn{{\omega}_2\cdot {\omega}_2} = 1$,
$\nn{{\omega}_2\cdot {\omega}_1} = 2$, $\nn{{\omega}_2\cdot
{\omega}_1 + {\omega} } = 3$. Also, we have $j({\omega}_2\cdot
{\omega}_2) = \jjj({\omega}_2\cdot {\omega}_1) = 1$ and
$J({\omega}_2\cdot {\omega}_2) = J({\omega}_2\cdot {\omega}_1) =
[0,{\omega}_2\cdot {\omega}_2)$.
\end{example}

\vspace{2mm} If $\cf(J(\zeta)^+)\in \{\oo,\oot\}$, we denote by
$\{\eps {\zeta}{\nu}:{\nu}<\cf(\topp {J(\zeta)})\}$  the
increasing enumeration of $\incof {J(\zeta)}$, i.e.  
$\eps {\zeta}{\nu}=\veps {J({\zeta})}{\nu}$  for 
${\nu}<\cf (\topp {J(\zeta)})$.

\vspace{2mm} Now if ${\zeta}<{\delta}$, we define the {\em basic
orbit} of $\zeta$ (with respect to $\mathbb I$) as
\begin{equation}
\oorb {\zeta}=\bigcup\{(\incof{\contint{\zeta}m}\cap {\zeta}):
m<\nn {\zeta}\}.
\end{equation}

\vspace{1mm} Note that this is the notion of orbit used in \cite{M} in
order to construct by forcing an LCS space $X$ such that $\SEQ(X)
= \langle \kappa \rangle_{\eta}$ for any specific regular cardinal
$\kappa$ and any ordinal $\eta < \kappa^{++}$. However, this
notion of orbit can not be used to construct an LCS space $X$ such
that $\SEQ(X) = \langle \kappa \rangle_{\kappa^+}\concat \;\langle
\kappa^+ \rangle$. To check this point, assume on the contrary
that such a space $X$ can be constructed by forcing from the
notion of a basic orbit. Then, since the basic orbit of $\kappa^+$
is $\{0\}$, we have that if $x,y$ are any two different elements
of $I_{\kappa^+}(X)$ and $U,V$ are basic neighbourhoods of $x,y$
respectively, then $U\cap V\subset I_0(X)$. But then, we deduce
that $|I_1(X)| = \kappa^+$.

\vspace{1mm} However, we will show that a refinement of the notion
of basic orbit can be used to proof Theorem 1.6.

\vspace{1mm} If  ${\zeta}<{\delta}$  with  $\cf {\zeta}\ge \oo$,
we define the {\em extended orbit} of $\zeta$ by
\begin{equation}
\eorb {\zeta}= \oorb {\zeta}\cup (\incof {J(\zeta)}\cap {\zeta}).
\end{equation}

\vspace{1mm} Consider the tree of intervals defined in Example-2.2. 
Then, we have $\oorb{{\omega}_2\cdot {\omega}_1} =
\overline{o}({\omega}_2\cdot {\omega}_1) = \{{\omega}_2\cdot
\xi:0\leq \xi < {\omega}_1\}$, $o({\omega}_2\cdot {\omega}_2) =
\{0\}$, $\overline{o}({\omega}_2\cdot {\omega}_2) =
\{{\omega}_2\cdot \xi:0\leq \xi <{\omega}_2\}$.

\vspace{1mm} Note that if $\zeta < \delta$, the basic orbit of
$\zeta$ is a set of cardinality at most $\kappa$ (see
\cite[Proposition 1.3]{M}). Then, it is easy to see that for any $\zeta
< \delta$ with $\cf {\zeta}\ge \oo$ , the extended orbit of
$\zeta$ is a cofinal subset of ${\zeta}$ of cardinality $\cf
{\zeta}$.

\vspace{1mm} In order to define the desired notion of forcing, we
need some preparations. The underlying set of the desired space
will be the union of a collection of blocks.

Let
\begin{equation}
\blocks=\{S\}\cup\{\<{\zeta},{\eta}\>:{\zeta}<{\delta},\cf
{\zeta}\in \{\oo,\oot\}, {\eta}<\oot\}.
\end{equation}
Let
\begin{equation}
\block S={\delta}\times \oo
\end{equation}
and
\begin{equation}
\block {{\zeta},{\eta}}=\{\<{\zeta},{\eta}\>\}\times
       [{\zeta},{\delta})\times
\oo
\end{equation}
for $\<{\zeta},{\eta}\>\in \blocks\setm \{S\}$.

Let
\begin{equation}
\und=\bigcup\{\block T:T\in \blocks\}.
\end{equation}

The underlying set of our space will be $\und$. We should produce
a partition $X=Y\cup^*\bigcup^*\{Y_{\zeta}:{\zeta}\in \ltd\}$ such
that
\begin{enumerate}[(1)]
\item $Y$ is an open subspace of $X$ with $\SEQ(Y)=\conseq
{\oo}{\delta},$ \item $Y\cup Y_{\zeta}$ is an open subspace of $X$
with
 $\SEQ(Y\cup Y_{\zeta})=\conseq {\oo}{\zeta}\concat
\conseq {\oot}{\delta-{\zeta}}$.
\end{enumerate}
We will have  $Y=\block S$, $Y_{\zeta}=\bigcup\{\block
{{\zeta},{\eta}}:{\eta}<\oot\}$ for ${\zeta}\in \ltd$.

Let
\begin{equation}
\piz:\und\to {\delta}  \text{ such that  }
\begin{array}{l}
\piz (\<{\alpha},{\nu}\>)={\alpha},\\
\piz (\<{{\zeta},{\eta},\alpha},{\nu}\>)={\alpha}.
\end{array}
\end{equation}

Let
\begin{equation}
\pim:\und\to {\delta}  \text{ such that  }
\begin{array}{l}
\pim (\<{\alpha},{\nu}\>)={\alpha},\\
\pim (\<{{\zeta},{\eta},\alpha},{\nu}\>)={\zeta}.
\end{array}
\end{equation}

Define
\begin{equation}
\pib:\und\to \blocks  \text{ by the formula } x\in \block {\pib
(x)}.
\end{equation}

Define the {\em block orbit} function $\borbf:\blocks\setm
\{S\}\to \br {\delta};\le \oo;$ as follows:
\begin{equation}
\borb {\<{\zeta},{\eta}\>}=\left\{
\begin{array}{ll}
\eorb{\zeta}&\text{if $\cf {\zeta}=\oo$},\\
\oorb {\zeta}\cup \{\eps {\zeta}{\nu}:{\nu}<{\eta}\}& \text{if
$\cf {\zeta}=\oot$}.\end{array} \right.
\end{equation}

That is, if $\cf{\zeta}=\oot$ then
\begin{equation}\notag
\borb {\<{\zeta},{\eta}\>}=\eorb {\zeta}\cap \eps {\zeta} {\eta}.
\end{equation}

Finally we define the {\em orbits} of the  elements  of $\und$ as
follows:
\begin{equation}
\orbf:\und \to \br {\delta};\le \oo; \text{ such that  }
\begin{array}{l}
\orb  {\<{\alpha},{\nu}\>}=\oorb {\alpha},\\
\orb {\<{{\zeta},{\eta},\alpha},{\nu}\>}=\borb
     {\<{\zeta},{\eta}\>}\cup (\oorb {{\alpha}}\setm {\zeta}).
\end{array}
\end{equation}

Let $\Lambda\in \mathbb I$ and $\{x,y\}\in \br \und;2;$. We say
that {\em $\Lambda$ isolates $x$ from $y$} if
\begin{enumerate}[(i)]
\item  $\bott \Lambda <\piz (x)<\topp \Lambda$, \item $\topp
\Lambda\le \piz (y) $ provided $\pib(x)=\pib(y)$, \item $\topp
\Lambda\le  \pi_{-}(y) $ provided $\pib(x)\ne\pib(y)$.
\end{enumerate}

\vspace{2mm} Now, we define the poset $\pcal=\<P,\le\>$ as
follows: $\<A,\preceq,i\>\in P$ iff
\begin{enumerate}[(P1)]
\item $A\in \br \und;\om;$. \item $\preceq$ is a partial order on
$A$ such that
$x\preceq y$ implies $x=y$ or $\piz(x)<\piz (y)$. 
\item Let $x\preceq y$.
  \begin{enumerate}[(a)]
    \item If $\pib(y)=\<{\zeta},{\eta}\>$ and ${\zeta}\le \piz(x)$
    then $\pib(x)=\pib(y)$.
    \item If $\pib(y)=\<{\zeta},{\eta}\>$ and ${\zeta}> \piz(x)$ then
    $\pib(x)=S$.
    \item If $\pib(y)=S$ then   $\pib(x)=S$.
  \end{enumerate}
\item $\ifu:\br A;2; \to A\cup\{\undef\}$ such that
for each $\{x,y\}\in \br A;2;$ we have
  \begin{equation}\notag
  \forall a\in A ([a\preceq x\land a\preceq y]\text{ iff } a\preceq \ifu\{x,y\}).
  \end{equation}
\item $\forall \{x,y\}\in \br A;2;$ if $x$ and $y$ are
$\preceq$-incomparable but $\preceq$-compatible, then
$\piz(\ifu\{x,y\})\in \orb x\cap \orb y$.

\item Let $\{x,y\}\in [A]^2$ with $x\preceq y$. Then:
\begin{enumerate}[(a)]
    \item If $\pi_B(x)=S$ and $\Lambda\in \mathbb I$ isolates $x$
    from $y$, then there is
 $z\in A$ such that $x\preceq z\preceq y$ and $\piz
      (z)=\topp {\Lambda}$.
      \item If $\pi_B(x)\neq S$, $\pi(x)\neq \pi_{-}(x)$ and $\Lambda\in \mathbb I$ isolates $x$
    from $y$, then there is
 $z\in A$ such that $x\preceq z\preceq y$ and $\piz
      (z)=\topp {\Lambda}$.
      \end{enumerate}

\end{enumerate}

The ordering on $P$ is the extension: $\<A,\preceq,\ifu\>\le
\<A',\preceq',\ifu'\>$
 iff $A'\subset A$, $\preceq'=\preceq\cap (A'\times A')$, and
$\ifu'\subs \ifu$.

\vspace{2mm} By using (P3), we obtain:

\begin{cclaim}
\label{Claim-2.1}  Assume that $x,y,z$ and
$\Lambda$ are as in (P6). Then we have:

(a) If $\pi_B(x)=\pi_B(y)$, then $\pi_B(z)=\pi_B(x)=\pi_B(y)$.

(b) If $\pi_B(x)\neq\pi_B(y)$ and $\Lambda^+ < \pi_{-}(y)$, then
$\pi_B(z)=\pi_B(x)$.

(c) If $\pi_B(x)\neq\pi_B(y)$ and $\Lambda^+ = \pi_{-}(y)$, then
$\pi_B(z)=\pi_B(y)$.
\end{cclaim}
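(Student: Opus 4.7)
The plan is to read off each of (a), (b), (c) by applying condition (P3) twice, once to the pair $x \preceq y$ and once to the pair $z \preceq y$, and combining the results with the three clauses of the isolation definition. The only nontrivial datum about $z$ is $\pi(z) = \Lambda^+$; everything else follows from the classification of blocks in (P3).

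For (a), I would split according to whether $\pi_B(y) = S$ or $\pi_B(y) = \langle \zeta, \eta \rangle$. If $\pi_B(y) = S$, then (P3)(c) applied to $z \preceq y$ immediately gives $\pi_B(z) = S$. Otherwise $\pi_B(x) = \pi_B(y) = \langle \zeta, \eta \rangle$ forces, via (P3)(a) for $x \preceq y$, the inequality $\zeta \le \pi(x)$; clause (i) of the isolation definition then yields $\pi(x) < \Lambda^+ = \pi(z)$, so $\zeta \le \pi(z)$, and (P3)(a) applied now to $z \preceq y$ produces $\pi_B(z) = \langle \zeta, \eta \rangle$.

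For (b) and (c), the hypothesis $\pi_B(x) \neq \pi_B(y)$ combined with (P3)(c) rules out $\pi_B(y) = S$, so $\pi_B(y) = \langle \zeta, \eta \rangle$ for some $\zeta, \eta$, and (P3) together with the inequality forces $\pi_B(x) = S$ and $\pi(x) < \zeta = \pi_-(y)$. Now apply (P3) to $z \preceq y$: the value of $\pi_B(z)$ is decided by whether $\zeta \le \pi(z) = \Lambda^+$ or $\zeta > \Lambda^+$. In case (b), $\Lambda^+ < \pi_-(y) = \zeta$, so (P3)(b) gives $\pi_B(z) = S = \pi_B(x)$. In case (c), $\Lambda^+ = \pi_-(y) = \zeta$, so (P3)(a) applies and yields $\pi_B(z) = \langle \zeta, \eta \rangle = \pi_B(y)$.

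I do not anticipate any real obstacle; the argument is a direct bookkeeping check. The only subtlety is to remember that isolation clause (ii) is the relevant one in case (a) (giving $\Lambda^+ \le \pi(y)$) while clause (iii) governs cases (b) and (c) (giving $\Lambda^+ \le \pi_-(y)$), and these are exactly what make the dichotomy $\Lambda^+ < \pi_-(y)$ vs.\ $\Lambda^+ = \pi_-(y)$ exhaustive when $\pi_B(x) \neq \pi_B(y)$.
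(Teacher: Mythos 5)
Your argument is correct and is exactly the routine check the paper intends: the paper states the claim as an immediate consequence of (P3) (together with the isolation clauses) and gives no further proof. The only cosmetic point is that $\zeta\le\pi(x)$ in case (a) comes from the definition of the block $\block{{\zeta},{\eta}}$ (or the contrapositive of (P3)(b)) rather than from (P3)(a) itself, which does not affect the argument.
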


  Since $\kappa^{<\kappa}=\kappa$ implies
  $(\kappa^+)^{<\kappa}=\kappa^+$, 
  we have
  that the cardinality of $P$ is $\kappa^+$. 
Then,  using the arguments of \cite{M} it is enough to prove
that Lemmas \ref{Lemma-2.2}, \ref{Lemma-2.3} and \ref{Lemma-2.4}  below hold.

\begin{lemma}
\label{Lemma-2.2} $\pcal$ is
$\oo$-complete.
\end{lemma}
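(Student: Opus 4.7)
The plan is to establish $\kappa$-completeness by verifying directly that every strictly descending chain of length $<\kappa$ has a canonical lower bound, given by coordinate-wise union. Let $\<p_\al:\al<\gamma\>$ be such a chain with $\gamma<\oo$, write $p_\al=\<A_\al,\preceq_\al,\ifu_\al\>$, and set
\begin{equation}\notag
A=\bigcup_{\al<\gamma}A_\al,\qquad {\preceq}=\bigcup_{\al<\gamma}{\preceq_\al},\qquad \ifu=\bigcup_{\al<\gamma}\ifu_\al.
\end{equation}
Because the order on $\pcal$ forces $A_\al\subseteq A_\be$, ${\preceq_\al}={\preceq_\be}\cap(A_\al\times A_\al)$ and $\ifu_\al\subseteq \ifu_\be$ for $\al\le\be$, the three unions are well defined, $\preceq$ is a partial order on $A$, and $\ifu$ is a function on $[A]^2$ taking values in $A\cup\{\undef\}$ (a pair $\{x,y\}\subs A_\al$ gets the value $\ifu_\al\{x,y\}$, which is constant in $\al$ once $\{x,y\}\subs A_\al$). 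The candidate lower bound is $p=\<A,\preceq,\ifu\>$, and I have to check it satisfies (P1)--(P6) and extends every $p_\al$; the latter is immediate from the construction.

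The cardinality bound (P1) is the one place the hypothesis on $\oo$ is used: since $\gamma<\oo$ and $|A_\al|<\oo$ for each $\al<\gamma$, regularity of $\oo$ gives $\sup_{\al<\gamma}|A_\al|<\oo$, hence $|A|<\oo$. Conditions (P2) and (P3) are expressed by universal statements over pairs in $\preceq$; any such pair lies in some $\preceq_\al$, so the required property is simply inherited from $p_\al$. The same inheritance handles (P5): if $x,y\in A$ are $\preceq$-incomparable but $\preceq$-compatible, both facts (and the orbit condition $\piz(\ifu\{x,y\})\in \orb x\cap \orb y$) are witnessed inside any $A_\al$ containing $\{x,y\}$.

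For (P4) I need the defining biconditional for $\ifu$ to hold globally in $A$. Fix $\{x,y\}\in[A]^2$ and $a\in A$; choose $\be<\gamma$ with $\{x,y,a\}\subs A_\be$. Then $\ifu\{x,y\}=\ifu_\be\{x,y\}$, and $a\preceq x\wedge a\preceq y$ in $A$ happens iff it happens in $A_\be$ (same reasoning: comparabilities lift and descend along the chain), which by (P4) for $p_\be$ happens iff $a\preceq_\be \ifu_\be\{x,y\}$, i.e.\ iff $a\preceq \ifu\{x,y\}$. For (P6), suppose $x\prec y$ in $A$ and $\Lambda\in\mathbb{I}$ isolates $x$ from $y$; the isolation condition refers only to $\pib$ and $\piz$, hence is independent of $\al$. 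Pick $\be$ with $x\preceq_\be y$; then (P6a) or (P6b) applied to $p_\be$ produces a $z\in A_\be\subs A$ with $x\preceq z\preceq y$ and $\piz(z)=\topp\Lambda$, which is what is required in $p$.

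The only mildly delicate point---not really an obstacle---is book-keeping against the inverted convention that a stronger condition has a \emph{larger} first coordinate: one must verify that $\ifu_\al$ stabilizes on each fixed pair once the pair has entered the domain, so that $\ifu$ is truly a function and not merely a relation. This is secured by $\ifu_\al\subseteq\ifu_\be$ for $\al\le\be$ in the definition of the order. Once this is in hand all six clauses follow by the inheritance pattern above, so $p\in P$ and $p\le p_\al$ for every $\al<\gamma$, proving $\oo$-completeness.
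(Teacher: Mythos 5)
Your proof is correct and is exactly the argument the paper has in mind: the paper disposes of this lemma in one line (``Since $\kappa$ is regular, Lemma \ref{Lemma-2.2} clearly holds''), leaving to the reader the routine verification that the coordinate-wise union of a descending chain of length $<\kappa$ is a condition, which is what you carry out. Your check of (P1)--(P6), including the stabilization of $\ifu$ and the fact that comparabilities restrict correctly along the chain, fills in those details correctly.
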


\begin{lemma}
\label{Lemma-2.3}  $\pcal$
 satisfies the $\oot$-c.c.
\end{lemma}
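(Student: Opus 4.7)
The plan is a standard $\Delta$-system amalgamation argument, following the template of the analogous $\oot$-c.c.\ proof in \cite{M} with extra care taken for the refined orbit structure. Let $\{p_\alpha = \langle A_\alpha, \preceq_\alpha, \ifu_\alpha\rangle : \alpha < \oot\}$ be a putative antichain; I seek distinct $\alpha, \beta \in \oot$ and a common refinement of $p_\alpha$ and $p_\beta$.

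First I would exploit $\oo^{<\oo} = \oo$ to normalize the family. The $\Delta$-system lemma applied to $\{A_\alpha : \alpha < \oot\}$ yields $S \in [\oot]^{\oot}$ with $\{A_\alpha : \alpha \in S\}$ a $\Delta$-system with some root $A^*$. Since there are only $\oo$-many combinatorial types of structures of size $<\oo$ labelled by $\blocks$ together with the auxiliary data $\piz$, $\pim$, $\pib$, $\nn{\cdot}$, $\jjj(\cdot)$, $J(\cdot)$, and $\orbf$, I thin $S$ further so that: (i) all $p_\alpha$ with $\alpha \in S$ agree on $A^*$; (ii) for all $\alpha, \beta \in S$ there is a bijection $\phi_{\alpha,\beta}\colon A_\alpha \to A_\beta$ fixing $A^*$ pointwise and preserving $\preceq$, $\ifu$, $\pib$, $\piz$, $\pim$, $\nn{\cdot}$, $\jjj(\cdot)$, and $J(\cdot)$; and (iii) the trace $\orb x \cap \piz[A^*]$ equals $\orb{\phi_{\alpha,\beta}(x)} \cap \piz[A^*]$ for every $x \in A_\alpha \setminus A^*$.

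Second, I pick distinct $\alpha, \beta \in S$ and set $A = A_\alpha \cup A_\beta$, with $\preceq$ the transitive closure of $\preceq_\alpha \cup \preceq_\beta$. By (P3) applied in each factor, any comparability $x \preceq y$ newly introduced by the amalgamation must factor through some $w \in A^*$, i.e.\ $x \preceq_\alpha w \preceq_\beta y$ or symmetrically. I extend $\ifu$ to $[A]^2$ by taking, for a cross pair $\{x,y\}$ with $x \in A_\alpha \setminus A^*$ and $y \in A_\beta \setminus A^*$, the $\preceq$-maximal common lower bound in $A^*$ when one exists and is unique, and $\undef$ otherwise.

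The bulk of the work lies in verifying (P1)--(P6) for $\langle A, \preceq, \ifu\rangle$. Conditions (P1)--(P3) are immediate, and (P4) reduces to showing that every common $\preceq$-lower bound of a cross pair lies in $A^*$, which follows from (P3) in the factors. Condition (P5) for a cross pair with $\ifu\{x,y\} = w \in A^*$ demands $\piz(w) \in \orb x \cap \orb y$, which is precisely the content of the thinning step (iii). The main obstacle is (P6): given a new comparability $x \preceq y$ factoring through $w \in A^*$ and an isolating interval $\Lambda \in \mathbb I$, one must produce $z \in A$ strictly between $x$ and $y$ with $\piz(z) = \topp\Lambda$. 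I would split on whether $\topp\Lambda \le \piz(w)$ or $\topp\Lambda > \piz(w)$, invoking (P6) in the appropriate factor ($p_\alpha$ or $p_\beta$) and taking $z = w$ in the boundary case. The most delicate point is verifying that the witness $z$ carries the correct block value $\pib(z)$ demanded by Claim~\ref{Claim-2.1}(a)--(c), which is why the normalization must preserve the full $\pib$--$\piz$--$\pim$ data and not merely the combinatorial structure of $\preceq$ and $\ifu$.
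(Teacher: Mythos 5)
Your overall frame (a $\Delta$-system plus thinning to isomorphic types, then amalgamating two of the conditions) is the same as the paper's, but the amalgamation you propose is exactly the naive one that the paper explains cannot work, and the failure is at (P4). You take $A=A_\alpha\cup A_\beta$ with the transitive closure of the two orders and, for a cross pair $x\in A_\alpha\setminus A^*$, $y\in A_\beta\setminus A^*$, you set $\ifu\{x,y\}$ equal to the maximal common lower bound in the root ``when one exists and is unique, and $\undef$ otherwise.'' Condition (P4) does not permit this fallback: it forces $\ifu\{x,y\}$ to be a true infimum, so if $x$ and $y$ have \emph{any} common lower bound then $\ifu\{x,y\}$ must be a common lower bound lying above all of them, and $\undef$ is illegal. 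But for a cross pair it may happen that there are infinitely many $v\in A^*$ with $v\preceq_\alpha x$ and $v\preceq_\beta y$ and no greatest one among them; this is precisely the obstruction the paper points out before building its amalgamation. In that situation no element of $A_\alpha\cup A_\beta$ can serve as the infimum (any candidate below both $x$ and $y$ would itself have to factor through the root), so the structure you build simply fails (P4), and your reduction ``(P4) reduces to showing every common lower bound of a cross pair lies in $A^*$'' is not sufficient even where it is true.

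Repairing this is the real content of the paper's proof and is absent from your outline: one must \emph{enlarge} the underlying set. The paper adds, for each $x$ in the non-root part of $p$, a new point $y_x$ with $\piz(y_x)=\beta_x$ chosen inside $\eorb{{\delta}_x}\cap[\gd({\delta}_x),\gu({\delta}_x))$, beyond the trace $Z$ of the root, together with further points $u_x$ (for $x$ in the paper's class $L^+$) having $\piz(u_x)=\pim(x)$; and the thinning has to be strengthened accordingly (the closure set $Z$, the ordinals $\gd({\delta}_i),\gu({\delta}_i)$, and the ``goodness'' of the conditions) so that these new $\pi$-values lie in $\orb s\cap\orb t$ for the pairs whose infimum they become -- otherwise (P5) would fail for the new infima -- and so that the new points also provide the (P6) witnesses in the hard case ${\delta}_{g(s)}={\delta}_{g(t)}$ with $g(t)\in L$, where your plan of ``taking $z=w$ in the boundary case'' has nothing to offer because the needed witness is one of the added points $u_t$, not an element of the root. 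Without this extra machinery the compatibility of the two chosen conditions is not established, so the proposal as written does not prove the $\oot$-c.c.
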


\begin{lemma}
\label{Lemma-2.4} 
 Assume that $p=\<A,\preceq,\ifu\>\in  P$, $x\in A$,
and ${\alpha}<\piz (x)$. Then  there is $p'=\<A',\preceq',\ifu'\>\in P$
with $p'\le p$ and there is $b\in A'\setminus A$ with  $\piz
 (b)={\alpha}$ 
such that  $b \preceq' y$ iff  $x\preceq
y$     for $y\in A$.
\end{lemma}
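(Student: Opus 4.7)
Write $U_x = \{y \in A : x \preceq y\}$. The construction of $p' = \<A', \preceq', \ifu'\>$ proceeds by adjoining a new point $b$ at level $\alpha$ in an appropriate block, together with a finite collection of auxiliary ``intermediate'' points at prescribed higher levels needed to maintain (P6).

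The block of $b$ is forced by (P3): take $\pib(b) = \pib(x)$ when $\pib(x) = \<\zeta,\eta\>$ with $\zeta \le \alpha$, and $\pib(b) = S$ otherwise. Using (P3) for $p$ to constrain $\pib(y)$ for $y \in U_x$, one checks that this single block assignment is consistent with the demand $b \preceq' y$ for \emph{every} $y \in U_x$. Since $|A| < \oo$ and each block has $\oo$ elements per level, pick $b \in \block{\pib(b)} \setm A$ with $\piz(b) = \alpha$.

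The intervals $\Lambda \in \mathbb I$ with $\bott \Lambda < \alpha < \topp \Lambda$ form the finite descending chain $\contint\alpha n$ ($n < \nn\alpha$), so the candidate levels for intermediates required by (P6) for pairs $\{b, y\}$ lie in the finite set $T = \{\topp\contint\alpha n : n < \nn\alpha\}$. For $t \in T$ with $t > \piz(x)$, a block-sensitive calculation shows that any $\Lambda$ isolating $b$ from $y$ also isolates $x$ from $y$; hence (P6) for $p$ applied to $\{x, y\}$ already supplies the required intermediate $z \in A$, and since $x \preceq z$, $z \in U_x$, so $b \preceq' z$ as needed. For $t \in T$ with $\alpha < t \le \piz(x)$, introduce a fresh element $c_t$ at level $t$; its block is again forced by (P3) (equal to $\pib(b)$ in general, and equal to $\pib(x)$ at the unique level $t = \zeta$ in the case $\pib(x) = \<\zeta, \eta\>$ with $\zeta > \alpha$). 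The $c_t$'s are chained by level and declared $\preceq'$-below exactly those $y \in U_x$ for which the corresponding $\Lambda$ isolates $b$ from $y$. The extension of $\ifu$ to $\ifu'$ is then immediate, since no element of $A$ lies $\preceq'$-below any newly introduced point.

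Conditions (P1)--(P4) are routine, and (P5) is vacuous for pairs involving new elements: new--old incomparable pairs are incompatible, and new--new pairs are $\preceq'$-comparable by the chain structure. The main obstacle is (P6): for pairs $\{b, y\}$ it is handled above, but for pairs $\{c_t, y\}$ or $\{c_t, c_{t'}\}$ one must verify that the further intermediates demanded by (P6) are either already in the chain or are supplied by (P6) for $\{x, y\}$ in $p$, using Claim \ref{Claim-2.1} to track blocks. The most delicate situation is $\pib(x) = \<\zeta, \eta\>$ with $\zeta > \alpha$ and $t < \zeta$, where intermediates at levels in $(t, \zeta]$ must be accounted for; here the tree structure of $\mathbb I$ together with the definition of the extended orbit $\eorb\zeta = \oorb\zeta \cup (\incof{\jint\zeta} \cap \zeta)$ are exploited to ensure the required levels are accessible from the chain we constructed, completing the verification.
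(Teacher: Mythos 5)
Your overall strategy is the paper's: adjoin a chain of auxiliary points at interval-top levels, placed below $x$ and below the elements of $U_x=\{y\in A:x\preceq y\}$, and discharge (P6) for the new pairs either inside the chain or by applying (P6) in $p$ to $\{x,y\}$. Your finiteness observation is even sound (any $\Lambda\in\mathbb I$ with $\bott{\Lambda}<t<\topp{\Lambda}$, where $t=\topp{\contint{\alpha}{n}}$, must contain $\contint{\alpha}{n}$ and hence $\alpha$, so $\topp{\Lambda}\in T$ again), whereas the paper simply closes a countable set $K\subseteq[\alpha,\piz(x))$ of levels under taking interval tops below $\piz(x)$, which costs nothing as $\oo>\omega$. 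But your construction, as specified, fails precisely in the case you call most delicate, $\pib(x)=\<{\zeta},{\eta}\>$ with $\zeta>\alpha$. There every $y\in U_x$ lies in block $\<{\zeta},{\eta}\>$ by (P3), so an interval isolates $b$ from such $y$ only if its top is $\le\pim(y)=\zeta$; hence your points $c_t$ with $\zeta<t\le\piz(x)$ are placed below no element of $A$. They are nevertheless compatible with each $y\in U_x$ (the common lower bound $b$ satisfies $b\preceq' y$) while incomparable with $y$, so (P5) is not vacuous, contrary to your claim: it demands $\piz(\ifu'\{c_t,y\})\in\orb{c_t}\cap\orb{y}$, and the infimum is a chain point whose level is the top of an interval through $\alpha$, which has no reason to lie in $\orb{y}$. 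In the same case your block rule also breaks (P3): making only the single level $t=\zeta$ belong to block $\pib(x)$ gives $c_{\zeta}\preceq' c_{t'}$ with $\pib(c_{\zeta})=\<{\zeta},{\eta}\>$ and $\pib(c_{t'})=S$ for $t'>\zeta$, contradicting (P3)(c). A smaller defect: at $t=\piz(x)$ your rule yields $c_t\preceq' x$ with $\piz(c_t)=\piz(x)$, violating (P2).

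The repair is what the paper does: never create a new point that fails to lie below all of $U_x$. The paper takes a countable $K\subseteq[\alpha,\piz(x))$ containing $\alpha$ and closed under interval tops below $\piz(x)$, one new point $b_{\gamma}$ per $\gamma\in K$, with $\pib(b_{\gamma})=S$ for $\gamma<\pim(x)$ and $\pib(b_{\gamma})=\pib(x)$ for $\gamma\ge\pim(x)$ when $\pib(x)\ne S$, chains them by level, and puts every $b_{\gamma}$ below every $z\in U_x$. Then each new--old pair is comparable or incompatible, so (P4) and (P5) are genuinely trivial (no orbit computation at all -- your closing appeal to the extended orbit is not needed in a correct argument), and the (P6)-witness is a chain point (tops in $K$), or $x$ itself (top $=\piz(x)$), or comes from (P6) in $p$ applied to $\{x,y\}$ (tops $>\piz(x)$; this appeal is legitimate because when $\pib(x)\ne S$ such tops only occur if $\pim(x)<\piz(x)$, so (P6)(b) applies). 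So keep your finite set $T$ if you wish, but you must either omit the useless points at levels in $(\pim(x),\piz(x)]$ or, as in the paper, put all new points at levels $\ge\pim(x)$ into block $\pib(x)$ and below all of $U_x$; as written, the amalgamation you describe is not a condition of $P$.
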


\vspace{2mm} Since ${\kappa}$ is regular,  Lemma \ref{Lemma-2.2} clearly
holds.

\begin{proof}[PROOF of Lemma \ref{Lemma-2.4}]
Let ${\beta}=\piz (x)$. Let $K$ be a countable subset of $
[{\alpha},{\beta})$ such that  ${\alpha}\in K$ and $\topp
{\contint {\gamma}n }\in K\cup [{\beta},{\delta})$ for
${\gamma}\in K$ and $n<{\omega}$.  For each ${\gamma}\in K$ pick
$b_{\gamma}\in X\setm A$ such that $\piz(b_{\gamma})={\gamma}$ and
\begin{enumerate}[(1)]
\item if $\pib(x)=S$ then $\pib(b_{\gamma})=S$. \item if
$\pib(x)\ne S$ and ${\gamma}\ge\pim(x)$ then $\pib(
b_{\gamma})=\pib(x)$. \item if $\pib(x)\ne S$ and
${\gamma}<\pim(x)$ then $\pib( b_{\gamma})=S$.
\end{enumerate}

Let $A'=A\cup\{b_{\gamma}:{\gamma}\in K\}$,
\begin{multline}\notag
\preceq'=\preceq\cup\{\<b_{\gamma},b_{{\gamma}'}\>:
{\gamma},{\gamma}'\in K, {\gamma}\le {\gamma}'\}
\\\cup\{\<b_{\gamma},z\>:{\gamma}\in K, z\in A, x\preceq z\}.
\end{multline}
The definition of $\ifu'$ is straightforward because if $y\in A'$
and ${\gamma}\in K$ then either $y$ and $b_{\gamma}$ are
$\preceq'$-comparable or they are $\preceq'$-incompatible.

Then $p'=\<A',\preceq',i'\>$ and $b=b_{\alpha}$ satisfy the requirements.
\end{proof}

Finally we should prove Lemma \ref{Lemma-2.3}.

\begin{proof}[Proof of Lemma \ref{Lemma-2.3}]

Assume that $\<r_{\nu}:{\nu}<\oot\>\subs P$ with $r_{\nu}\neq r_{\mu}$ for $\nu <\mu<\kappa^+$.

Write $r_{\nu}=\<A_{\nu},\preceq_{\nu},\ifu_{\nu}\>$ and
$A_{\nu}=\{\xn i:i<{\sigma}_{\nu}\}$.

Since we are assuming that $\kappa^{<\kappa}=\kappa$, by thinning
out $\<r_{\nu}:{\nu}<\oot\>$ by means of standard combinatorial
arguments, we can assume the following:
\begin{enumerate}[(A)]
\item ${\sigma}_{\nu}={\sigma}$ for each ${\nu}<\oot$. \item
$\{A_{\nu}:{\nu}<\oot\}$ forms a $\Delta$-system with kernel $A$.
\item For each ${\nu}<{\mu}<\oot$ there is an isomorphism
$h=h_{{\nu},{\mu}}:\<A_{\nu},\preceq_{\nu}, \ifu_{\nu}\>\to
\<A_{\mu},\preceq_{\mu},\ifu_{\mu}\>$ such that 
for every $i<\sigma$ and $x,y\in A_{\nu}$ the following holds:
\begin{enumerate}[(a)]
  \item $h\restriction A=\operatorname{id}.$
  \item $h(\xn i)=\xm i.$
\item $\pib(x)=\pib(y)$  iff $\pib(h(x))=\pib(h(y))$. \item
$\pib(x)=S$ iff $\pib(h(x))=S.$\item $\pi(x)=\pi_{-}(x)$ iff
$\pi(h(x))=\pi_{-}(h(x))$.   
\item if $\{x,y\}\in \br A;2;$ then 
  $\ifu_{\nu}\{x,y\}=\ifu_{\mu}\{x,y\}$.
  
\end{enumerate} 

\end{enumerate}

\vspace{2mm}
  Note that in order to obtain (C)(f) we use condition (P5) and the fact that $|o^*(x)|\leq \kappa$ for every $x\in A$. Also, we may assume the following:

\begin{enumerate}[(A)]
\addtocounter{enumi}{3}
\item There is a partition ${\sigma}=K\cup^* F\cup^* L\cup^*
\ml\cup^* \mb$ such that for each ${\nu}<{\mu}<\oot$:
\begin{enumerate}[(a)]
\item $\forall i\in K$ $\xn i\in A$ and so $\xn i=\xm i$. $A=\{\xn
i:i\in K\}$. \item $\forall i\in F$ $\xn i\ne \xm i$ but $\pib(\xn
i)=\pib (\xm i)\ne S$. \item $\forall i\in L$ $\pib (\xn i)\ne
\pib (\xm i)$ but $\pim (\xn i)= \pim (\xm i)$. \item  $\forall
i\in \ml$
 $\pib (\xn i)=S$ and $\piz(\xn i)\ne \piz(\xm i)$.
\item $\forall i\in \mb$ $\pib (\xn i)\ne S$ and $\pim (\xn i)\ne
\pim (\xm i)$.
\end{enumerate}
\item If $\pib ({\xn i})=\pib ({\xn j})$ then $\{i,j\}\in \br K\cup
  \ml;2;\cup \br K\cup F;2;\cup
 \br L;2; \cup \br \mb;2;$.

\end{enumerate}

It is well-known that if   ${\gamma}<\oo=\oo^{<\oo}$ then the
following partition relation holds:
\begin{equation}
\notag \oot\to (\oot, ({\omega})_{\gamma})^2.
\end{equation}
Hence we can assume:
\begin{enumerate}[(A)]
\addtocounter{enumi}{5}
\item If ${\nu}<{\mu}<\oot$ then for each $i\in {\sigma}$ we have
\begin{enumerate}[(a)]
\item \label{g} $\piz(\xn i)\le \piz(\xm i)$, \item \label{h}
$\pim(\xn i)\le \pim(\xm i)$.
\end{enumerate}
\end{enumerate}

By (F)(a) and (F)(b)  the sequences $\{\piz (\xn i):{\nu}<\oot \}$
and $\{\pim (\xn i):{\nu}<\oot \}$ are increasing for each $i\in
{\sigma}$, hence the following definition is meaningful:

For $i \in {\sigma}$ let
\begin{equation}
 \notag
{\climit}_i=\left\{
\begin{array}{ll}
\piz (\xn i)&\text{if $i\in K$},\\
\sup\{\piz (\xn i):{\nu}<\oot \}&\text{if $i\in F\cup \ml$},\\
\pim (\xn i)&\text{if $i\in L$},\\
\sup\{\pim (\xn i):{\nu}<\oot \}&\text{if $i\in \mb$}.
\end{array}
\right.
\end{equation}

By using Proposition \ref{Proposition-2.1}, (C)(c) and condition (P3), we obtain:

\begin{cclaim}
\label{Claim-2.5}

 (a) If $i\in F\cup
D\cup M$, then $cf(\delta_i) = \kappa^+$ and   $\mbox{
sup}(J(\delta_i))=\delta_i$.   Moreover 
for
every $\nu < \kappa^+$  we have  
$\pi(x_{\nu,i}) < \delta_i$ if $i\in F\cup D$, and 
$\pim(x_{\nu,i}) < \delta_i$ 
 if $i\in M$.

(b) If $\{i,j\}\in [L]^2\cup [M]^2$ and $x_{\nu,i} \prec_{\nu}
x_{\nu,j}$ for $\nu <\kappa^+$, then $\delta_i = \delta_j$.

\end{cclaim}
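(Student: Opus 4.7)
For part (a), the first step is to establish $\cf(\delta_i)=\kappa^+$. For $i\in D$ the values $\pi(x_{\nu,i})$ are pairwise distinct by (D)(d) and weakly increasing by (F)(a), hence strictly increasing of length $\kappa^+$, so $\delta_i=\sup_{\nu<\kappa^+}\pi(x_{\nu,i})$ has cofinality $\kappa^+$ and moreover $\pi(x_{\nu,i})<\delta_i$ for every $\nu$; the case $i\in M$ is identical, using $\pim$ together with (D)(e) and (F)(b). For $i\in F$ the sequence $\pi(x_{\nu,i})$ is only weakly increasing, so I argue by contradiction: if either $\cf(\delta_i)\le\kappa$ or $\pi(x_{\nu_0,i})=\delta_i$ for some $\nu_0$, then monotonicity and the bound $\delta_i$ yield a $\nu^*<\kappa^+$ such that $\pi(x_{\nu,i})=\delta_i$ for all $\nu\ge\nu^*$; but by (C)(c) all these $x_{\nu,i}$ sit in a single block $\block{{\zeta},{\eta}}$ with a now-fixed third coordinate, leaving only $\kappa$ possibilities for the last coordinate, contradicting the distinctness of the $\kappa^+$-many $x_{\nu,i}$ with $\nu\ge\nu^*$.

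Granted $\cf(\delta_i)=\kappa^+$, the ordinal $\delta_i$ is a limit of uncountable cofinality, and the equality $\sup\jint{\delta_i}=\delta_i$ is essentially restated from the proof of Proposition~\ref{Proposition-2.1}: since $|\incof{\contint{\delta_i}{\nn{\delta_i}-1}}\cap\delta_i|\le\kappa<\cf(\delta_i)$, the set $\incof{\contint{\delta_i}{\nn{\delta_i}-1}}$ admits a predecessor $\xi$ of $\delta_i$, so $\jint{\delta_i}=[\xi,\delta_i)$ and $\sup\jint{\delta_i}=\delta_i$.

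For part (b), fix $\{i,j\}\in[L]^2\cup[M]^2$ with $x_{\nu,i}\prec_\nu x_{\nu,j}$ for every $\nu<\kappa^+$. First observe that $\pib(x_{\nu,i})\ne S$: for $i\in M$ this is item (D)(e), and for $i\in L$ it follows since (D)(c) requires $\pib(x_{\nu,i})\ne\pib(x_{\mu,i})$ for $\nu\ne\mu$, which together with the $S$-invariance clause (C)(d) forbids $\pib(x_{\nu,i})=S$; the same applies to $j$. Now apply (P3) to the pair $x=x_{\nu,i}$, $y=x_{\nu,j}$: writing $\pib(x_{\nu,j})=\langle\zeta,\eta\rangle$, either $\zeta\le\pi(x_{\nu,i})$, which forces $\pib(x_{\nu,i})=\pib(x_{\nu,j})$ by (P3)(a), or $\zeta>\pi(x_{\nu,i})$, which forces $\pib(x_{\nu,i})=S$ by (P3)(b)---impossible. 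Hence $\pim(x_{\nu,i})=\pim(x_{\nu,j})$ for every $\nu$. For $\{i,j\}\in[L]^2$ this is already $\delta_i=\delta_j$ by the definition of $\delta_i$ on $L$; for $\{i,j\}\in[M]^2$, taking the supremum over $\nu<\kappa^+$ yields $\delta_i=\delta_j$. The main subtlety is the $F$-case of part (a), where strict monotonicity is unavailable and one must bootstrap weak monotonicity to cofinality $\kappa^+$ via a pigeonhole count on the block structure; everything else is a straightforward unpacking of the trichotomy in (P3) together with the invariance clauses supplied by the $\Delta$-system thinning.
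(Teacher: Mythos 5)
Your proof is correct and is essentially the argument the paper intends (the paper only sketches this claim by citing Proposition~\ref{Proposition-2.1}, (C)(c) and (P3)): strict monotonicity of $\pi$ resp.\ $\pi_-$ handles $D$ and $M$, the pigeonhole on the $\kappa$-many last coordinates of a single block handles $F$, the $\cf=\kappa^+$ case inside the proof of Proposition~\ref{Proposition-2.1} gives $J(\delta_i)=[\xi,\delta_i)$ and hence $\sup J(\delta_i)=\delta_i$, and the (P3) trichotomy (with $\pi_B\neq S$ on $L\cup M$) gives (b). One minor slip: the fact that all $x_{\nu,i}$ with $i\in F$ lie in one fixed block is condition (D)(b) (constancy of $\pi_B$ on the $F$-coordinates), not (C)(c).
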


Indeed, (b) holds for large enough ${\nu}$, and so (C)(c) implies that
  it holds for each ${\nu}$.

 We put
\begin{equation}
Z_0 = \{\pim(\xn i):i\in F\cup K, \pib (\xn i)\ne
S\}\cup \{{\climit}_i:i\in {\sigma}\}.
\end{equation}

Since $\pi''A=\{{\delta}_i:i\in K\}$ we have $\pi''A\subs Z_0$.
Then, we define $Z$ as the closure of $Z_0$ with respect
to $\mathbb I$:

\begin{equation}
  Z = Z_0\cup \{I^+: I\in \mathbb I,I\cap Z_0\neq
  \emptyset\}.
  \end{equation}

  \vspace{4mm}

Since $|Z| <\kappa$, we can assume:

\begin{enumerate}[(A)]
\addtocounter{enumi}{6}
\item\label{az}  $\; A=\{\xn i: i\in K\cup F\cup \ml, \piz (\xn i)\in Z\}.$
\end{enumerate}

Equivalently,
\begin{equation}
\text{if $i\in F\cup \ml$ then $\piz (\xn i)\notin Z$}.
\end{equation}

Let us remark that for $i\in L\cup M$ we may have  
that $\pin(\xn i)\in Z$. 

\medskip

 Our aim is to show that there are $\nu<\mu<\kappa^+$ such that
$r_{\nu}$ and $r_{\mu}$ are compatible. Note that if $x,y\in A$
with $x\neq y$ then, by (C)(f), we may assure that
$i_{\nu}\{x,y\}=i_{\mu}\{x,y\}$. However, if $x\in
A_{\nu}\setminus A$ and $y\in A_{\mu}\setminus A$ it may happen
that for infinitely many $v\in A$ we have $v\preceq_{\nu}x$ and
$v\preceq_{\mu}y$. Then, in order to amalgamate $r_{\nu}$ and
$r_{\mu}$ in such a way that any pair of such elements has an
infimum in the amalgamation, we will need to add new elements to
$A_{\nu}\cup A_{\mu}$. Then, the next definitions will permit us to find
suitable room for adding new elements to the domains of the
conditions.

Let

    $$\sigma_1 = \{i\in \sigma\setminus K: \cf(\delta_i)=\kappa
    \}$$

   \noindent  and

    $$\sigma_2 = \{i\in \sigma\setminus K: \cf(\delta_i)=\kappa^+
    \}.$$
    
\vspace{2mm}
    \noindent Assume that $i\in \sigma\setminus K$. Put $I_i=J(\delta_i)$.
    Let

    $$\xi_i = \mbox{ min}\{\nu \in \cf \delta_i: \epsilon^{I_i}_{\nu}
    > \mbox{ sup}(\delta_i\cap Z)\}.$$

    \noindent Then, if $i\in \sigma_1$ we put

    $$\underline{\gamma}(\delta_i) = \epsilon^{I_i}_{\xi_i} \mbox{
    and } \gamma(\delta_i) = \delta_i,$$

    \noindent and if $i\in \sigma_2$ we put

    $$\underline{\gamma}(\delta_i) = \epsilon^{I_i}_{\xi_i} \mbox{
    and } \gamma(\delta_i) = \epsilon^{I_i}_{\xi_i + \kappa}.$$

\begin{cclaim}
\label{Claim-2.6} For each $i\in F\cup D
\cup M$ there is ${\nu}_i<\oot$ such that for all $ \nu_i\leq \nu
<\kappa^+$ we have:

\begin{equation}
 \mbox{if } i\in F\cup D  \mbox{ then } \pi(x_{\nu,i})\in
 J(\delta_i)\setminus \gamma(\delta_i)
\end{equation}
and
\begin{equation}
\mbox{if } i\in  M \mbox{ then } \pi_{-}(x_{\nu,i})\in
 J(\delta_i)\setminus \gamma(\delta_i).
\end{equation}
\end{cclaim}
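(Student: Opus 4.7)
The plan is a direct application of Claim \ref{Claim-2.5} together with the definitions of $\gu$ and $J$. Since $i\in F\cup \ml\cup \mb$, Claim \ref{Claim-2.5}(a) yields $\cf({\delta}_i)=\oot$ and $\topp{J({\delta}_i)}=\sup J({\delta}_i)={\delta}_i$; in particular $i\in {\sigma}_2$, so by definition $\gu({\delta}_i)=\eps {I_i}{{\xi}_i+\oo}$. Because ${\xi}_i<\cf({\delta}_i)=\oot$ and $\oot$ is regular, ${\xi}_i+\oo<\oot$, whence $\gu({\delta}_i)\in \incof{J({\delta}_i)}\cap {\delta}_i$; in particular $\gu({\delta}_i)<{\delta}_i$. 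Also $\bott{J({\delta}_i)}<\topp{J({\delta}_i)}={\delta}_i$ since $J({\delta}_i)$ is nonempty.

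Now suppose $i\in F\cup \ml$. By construction ${\delta}_i=\sup\{\pin(\xn i):{\nu}<\oot\}$, and by (F)(a) the sequence $\<\pin(\xn i):{\nu}<\oot\>$ is non-decreasing. Since both $\gu({\delta}_i)$ and $\bott{J({\delta}_i)}$ lie strictly below ${\delta}_i$, there is some ${\nu}_i<\oot$ such that $\pin(\xn i)>\max\{\bott{J({\delta}_i)},\gu({\delta}_i)\}$ for every ${\nu}\ge {\nu}_i$. Combined with Claim \ref{Claim-2.5}(a), which guarantees $\pin(\xn i)<{\delta}_i=\topp{J({\delta}_i)}$, we conclude $\pin(\xn i)\in J({\delta}_i)\setm \gu({\delta}_i)$. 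The case $i\in \mb$ is entirely analogous, with $\pim$ in place of $\pin$, using (F)(b) for monotonicity and the definition ${\delta}_i=\sup\{\pim(\xn i):{\nu}<\oot\}$.

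Nothing here is delicate: once one observes $\gu({\delta}_i)<{\delta}_i=\topp{J({\delta}_i)}$, the conclusion follows by choosing ${\nu}_i$ large enough to get past both $\bott{J({\delta}_i)}$ and $\gu({\delta}_i)$. The only point to keep in mind is that $\gu({\delta}_i)<{\delta}_i$ would fail in the $i\in {\sigma}_1$ regime, where $\gu({\delta}_i)={\delta}_i$ by definition, but that regime is excluded for $i\in F\cup \ml\cup \mb$ by Claim \ref{Claim-2.5}(a).
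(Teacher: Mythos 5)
Your argument is correct and follows essentially the same route as the paper: note that $\gamma(\delta_i)<\sup(J(\delta_i))=\delta_i$ while $\delta_i$ is the supremum of the monotone sequence $\pi(x_{\nu,i})$ (resp.\ $\pi_{-}(x_{\nu,i})$ for $i\in M$), then use Claim \ref{Claim-2.5}(a) to keep these values below $\delta_i$. You merely spell out the justification that $i\in\sigma_2$ and $\gamma(\delta_i)=\epsilon^{I_i}_{\xi_i+\kappa}<\delta_i$, which the paper's terser proof leaves implicit.
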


\begin{proof}
For $i \in F\cup D\cup M$ we have

\begin{equation}
{\climit}_i=\left\{
\begin{array}{ll}
\sup\{\piz (\xn i):{\nu}<\oot \}&\text{if $i\in F\cup \ml$},\\
\sup\{\pim (\xn i):{\nu}<\oot \}&\text{if $i\in \mb$},
\end{array}
\right.
\end{equation}
and $\gamma(\delta_i)<\sup (\jint {{\climit}_i})=\climit_i$.
\end{proof}

\begin{cclaim}
\label{Claim-2.7} For each $i\in L$ with
$\mbox{ cf}(\delta_i)=\kappa^+$ there is ${\nu}_i<\oot$ such that
for all $ \nu_i\leq \nu <\kappa^+$, $o^*(x_{\nu,i})\supset
\overline{o}(\delta_i)\cap \gamma(\delta_i).$
\end{cclaim}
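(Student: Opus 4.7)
The plan is to reduce the claim to a monotonicity property of the second coordinates of the blocks containing the points $x_{\nu,i}$, and then install that monotonicity via one additional round of thinning. Fix $i\in L$ with $\cf\delta_i=\oot$. Since $i\in L$, condition (D)(c) gives $\pim(x_{\nu,i})=\delta_i$ for every $\nu$, so $\pib(x_{\nu,i})=\<\delta_i,\eta_\nu\>$ for some $\eta_\nu<\oot$; (D)(c) also forces the blocks to differ across $\nu$, so the $\eta_\nu$ are pairwise distinct. From the formula for $\borbf$ in the case $\cf\delta_i=\oot$ and the definition of $\orbf$ on non-$S$ blocks,
\begin{equation*}
\orb{x_{\nu,i}}\supset\borb{\pib(x_{\nu,i})}=\oorb{\delta_i}\cup\{\eps{\delta_i}{\nu'}:\nu'<\eta_\nu\},
\end{equation*}
while $\eorb{\delta_i}\cap\gu(\delta_i)=(\oorb{\delta_i}\cap\gu(\delta_i))\cup\{\eps{\delta_i}{\nu'}:\nu'<\xi_i+\oo\}$ because $\gu(\delta_i)=\eps{\delta_i}{\xi_i+\oo}$ and $\{\eps{\delta_i}{\nu'}:\nu'<\oot\}$ enumerates $\incof{\jint{\delta_i}}$ increasingly. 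The claim therefore reduces to arranging $\eta_\nu\ge\xi_i+\oo$ for all sufficiently large $\nu$.

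To secure this bound I would add one more item to the thinning list (A)--(G): for every $i\in L$ with $\cf\delta_i=\oot$ the sequence $\<\eta_\nu^{(i)}:\nu<\oot\>$ is strictly increasing. This can be arranged by a recursive construction of length $\oot$: at stage $\alpha$ one selects $t_\alpha<\oot$ above all previously chosen $t_\beta$ and, for each of the fewer than $\oo$ relevant indices $i$, outside the set $\{t:\eta_t^{(i)}\le\sup_{\beta<\alpha}\eta_{t_\beta}^{(i)}\}$, which has cardinality at most $\oo$ because $\eta^{(i)}$ is injective. Regularity of $\oot$ makes such a $t_\alpha$ available. After passing to the cofinal subsequence $\{r_{t_\alpha}:\alpha<\oot\}$ we have $\eta_\nu\ge\nu$, so $\nu_i:=\xi_i+\oo$ does the job and the displayed inclusion finishes the claim.

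The main obstacle I foresee is justifying this supplementary thinning inside the paper's framework: the authors list explicit items (A)--(G) and then appeal to ``standard combinatorial arguments'', and one must verify that passing further to a cofinal subsequence preserves the $\Delta$-system (B), the isomorphism (C), the partitions (D)--(E), the monotonicity (F), and the condition (G) after re-computing $Z$ from the new $\pim$-values. Once that is granted, the claim is a direct unwinding of the definitions of $\borbf$, $\orbf$, $\eorbf$, and $\gu$.
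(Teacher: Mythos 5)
Your reduction is exactly the right one (the paper states this claim without proof, but the intended argument is the same unwinding): for $i\in L$ the blocks $\pi_B(x_{\nu,i})$ are pairwise distinct, and by (C)(d) none of them can be $S$ (if one were, all would be, contradicting distinctness), so $\pi_B(x_{\nu,i})=\langle\delta_i,\eta_{\nu}\rangle$ with $\nu\mapsto\eta_{\nu}$ injective, and since
$o_B(\langle\delta_i,\eta_\nu\rangle)=o(\delta_i)\cup\{\epsilon^{\delta_i}_{\nu'}:\nu'<\eta_\nu\}\subset o^*(x_{\nu,i})$
while $\overline{o}(\delta_i)\cap\gamma(\delta_i)=(o(\delta_i)\cap\gamma(\delta_i))\cup\{\epsilon^{\delta_i}_{\nu'}:\nu'<\xi_i+\kappa\}$, everything comes down to $\eta_\nu\ge\xi_i+\kappa$.

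Where you diverge is in how you secure that bound, and the detour is both unnecessary and slightly off-target. The claim is a statement about the sequence already fixed by (A)--(G); like Claim \ref{Claim-2.6}, it is proved for that sequence and only afterwards is one final thinning performed to get goodness (H). Your supplementary thinning instead proves the inclusion for a further cofinal subsequence, which would suffice for the paper's use but is not the claim as stated, and it saddles you with precisely the verification you flag at the end (that (B)--(G) survive and that $Z$, $\xi_i$, $\underline{\gamma}(\delta_i)$, $\gamma(\delta_i)$ are unchanged -- checkable, since by Claim \ref{Claim-2.5}(a) the suprema defining the $\delta_i$ are not attained and hence are the same along any cofinal subsequence, but it is extra work). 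None of this is needed: since $\xi_i+\kappa<\kappa^+$ and $\nu\mapsto\eta_\nu$ is injective, the set $\{\nu<\kappa^+:\eta_\nu<\xi_i+\kappa\}$ has cardinality at most $\kappa$, hence by regularity of $\kappa^+$ it is bounded by some $\nu_i<\kappa^+$; for every $\nu\ge\nu_i$ we have $\eta_\nu\ge\xi_i+\kappa$ and the displayed inclusions give $\overline{o}(\delta_i)\cap\gamma(\delta_i)\subset o^*(x_{\nu,i})$. This one-line counting argument yields the claim exactly as formulated, with no new thinning and no preservation issues.
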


\begin{definition}
\label{Definition 2.8} $r_{\nu}$ is {\em
good} iff
  \begin{enumerate}[(i)]
    \item $\forall i\in F\cup \ml $ ${\piz(\xn i)}\in\jint{{\climit}_i}
\setm \gamma(\delta_i)$.
    \item $\forall i\in \mb $ $\pim(\xn i)\in J(\delta_i) \setm \gamma(\delta_i) $.
     \item $\forall i\in L$ if $\cf {\climit}_i=\oot$ then $\orb {\xn
    i}\supset \overline{o}(\delta_i)\cap \gamma(\delta_i).$
  \end{enumerate}
\end{definition}

Using  Claims \ref{Claim-2.6} and \ref{Claim-2.7} we can assume:

\begin{enumerate}[(A)]
\addtocounter{enumi}{7} 
\item \label{good}$r_{\nu}$ is
good  for  ${\nu}<\oot$.
\end{enumerate}

By using $(\ref{good})$, we will prove that $r_{\nu}$ and $r_{\mu}$ are
compatible for $\{\nu,\mu\}\in [\kappa^+]^2$. First, we need to
prove some fundamental facts.

By using (P3), (E) and (C)(c)  we obtain:

\begin{cclaim}
\label{Claim-2.9} If $\xn i
\preceq_{\nu} \xn j$ then either $\pib (\xn i)=S$ or $\pib (\xn
i)=\pib(\xn j)$ and $\{i,j\}\in [K\cup F]^2\cup [L]^2\cup [M]^2$.
\end{cclaim}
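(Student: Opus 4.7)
\textbf{Plan for Claim \ref{Claim-2.9}.} The claim is really a bookkeeping corollary of conditions (P3), (E) and the $\Delta$-system normalization (C)(c); my approach is to reduce to a single awkward configuration and rule it out by hand.

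First, I would split on the block of $\xn j$. Since $\xn i\preceq_{\nu}\xn j$, condition (P3) applies. If $\pib(\xn j)=S$, then (P3)(c) immediately gives $\pib(\xn i)=S$, which is the first alternative in the conclusion. Otherwise $\pib(\xn j)=\<{\zeta},{\eta}\>$ for some ${\zeta},{\eta}$, and (P3)(a)--(b) say that $\pib(\xn i)$ is either $S$ (again the first alternative) or equal to $\pib(\xn j)$. Thus the only remaining case is
\[
\pib(\xn i)=\pib(\xn j)\ne S,
\]
and it suffices to locate $\{i,j\}$ in $\br K\cup F;2;\cup\br L;2;\cup\br \mb;2;$.

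Next, in that case (E) applies and gives $\{i,j\}\in\br K\cup \ml;2;\cup\br K\cup F;2;\cup\br L;2;\cup\br \mb;2;$. Comparing this with the target set, the claim will be proved as soon as I rule out the mixed configuration in which exactly one of $i,j$ lies in $K$ and the other in $\mb$.

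The key step, and the only one that requires a small argument, is this exclusion. Suppose for contradiction that, say, $i\in K$ and $j\in \mb$ (the symmetric case is identical). Pick any ${\mu}\ne{\nu}$ and consider the isomorphism $h=h_{{\nu},{\mu}}$. By (D)(a), $i\in K$ means $\xn i=\xm i$. By (C)(c), the property ``$\pib$-values coincide'' is preserved by $h$, so the assumption $\pib(\xn i)=\pib(\xn j)$ yields $\pib(\xm i)=\pib(\xm j)$. Combining these, $\pib(\xm j)=\pib(\xm i)=\pib(\xn i)=\pib(\xn j)$, which in particular forces $\pim(\xn j)=\pim(\xm j)$, contradicting $j\in \mb$ via (D)(e).

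The argument is entirely combinatorial once the right case split is made, and I do not expect any genuine obstacle: (P3) isolates the non-$S$ case, (E) narrows the index pair to four possibilities, and the $\Delta$-system clause (C)(c) together with the definitions of $K$ and $\mb$ eliminates the one remaining unwanted possibility.
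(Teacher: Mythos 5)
There is a genuine gap, and it lies in the bookkeeping step you call the ``key step''. After you reduce to the case $\pib(\xn i)=\pib(\xn j)\ne S$ and invoke (E), the discrepancy between (E)'s list $[K\cup \ml]^2\cup[K\cup F]^2\cup[L]^2\cup[\mb]^2$ and the target $[K\cup F]^2\cup[L]^2\cup[\mb]^2$ consists of the pairs in $[K\cup \ml]^2$ that meet $\ml$ (i.e.\ one index in $K$ and one in $\ml$, or both in $\ml$) --- recall $\ml=D$ and $\mb=M$ in the paper's notation. A pair with one index in $K$ and the other in $\mb$ does not appear in any term of (E)'s list at all, so the configuration you set out to exclude was never in play; your contradiction argument for it (via (C)(c), (D)(a) and (D)(e)) is correct but superfluous. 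Meanwhile the configurations that actually remain, namely those involving $\ml$, are never addressed in your proposal, so as written the proof does not reach the stated conclusion. The repair is immediate: if $i\in\ml$ then $\pib(\xn i)=S$ by (D)(d), contradicting the standing assumption $\pib(\xn i)\ne S$; and if $j\in\ml$ then $\pib(\xn j)=S$, whence (P3)(c) puts you back in the first alternative $\pib(\xn i)=S$. With that one line inserted (and your $K$--$\mb$ digression deleted) the argument is complete.

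For comparison, the paper disposes of the claim by noting that (P3) and (E) give it for large enough $\nu$ and then transferring to every $\nu$ via the isomorphism clause (C)(c); your route argues directly for each fixed $\nu$, which is perfectly legitimate since (P3) holds in every condition and (E), (D)(d) are uniform in $\nu$, so no transfer step is needed once the case analysis above is done correctly.
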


Indeed, (P3) and  (E) imply that Claim \ref{Claim-2.9} holds for large enough
  ${\nu}$,
and then (C)(c) yields that it holds for each ${\nu}$.

\begin{cclaim}
\label{Claim-2.10}  If $\xn i
\preceq_{\nu} \xn j$ then ${\climit}_i\le{\climit}_j$.
\end{cclaim}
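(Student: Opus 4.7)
My plan is to perform a case analysis by the partition class of $j$ in $K$, $F$, $L$, $D$, $M$, using the isomorphism from (C) to make everything uniform in $\nu$. Since $h_{\nu,\mu}$ is an order-isomorphism by (C)(c), the relation $\xn i \preceq_\nu \xn j$ holds for one $\nu$ iff $\xm i \preceq_\mu \xm j$ for every $\mu<\oot$, so by (P2) we obtain the uniform inequality $\pi(\xm i) < \pi(\xm j)$ for all $\mu<\oot$. I will also use the unified description $\climit_i = \sup_\mu \pi(\xm i)$ when $i\in K\cup F\cup D$, and $\climit_i = \sup_\mu \pim(\xm i)$ when $i\in L\cup M$ (the suprema being trivially attained in the constant cases $i\in K$ and $i\in L$), together with the trivial fact $\pim(x)\le \piz(x)$ for every $x\in\und$.

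The argument then splits into two cases. If $j\in K\cup F\cup D$, then $\climit_j = \sup_\mu \pi(\xm j)$, and combining $\pim(\xm i)\le \pi(\xm i) < \pi(\xm j)$ with passing to suprema in $\mu$ gives $\climit_i\le \climit_j$ regardless of the type of $i$. If $j\in L\cup M$, then $\climit_j=\sup_\mu\pim(\xm j)$, and Claim \ref{Claim-2.9} leaves only two sub-cases: either $\{i,j\}\in [L]^2\cup [M]^2$ (the bracket $[K\cup F]^2$ being excluded since $j\notin K\cup F$), in which case Claim \ref{Claim-2.5}(b) delivers $\climit_i = \climit_j$ directly; or $\pib(\xn i)=S$, which rules out $i\in F\cup L\cup M$ and so forces $i\in K\cup D$. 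In the latter sub-case $\climit_i = \sup_\mu \pi(\xm i)$, and (P3)(b) applied uniformly in $\mu$ yields $\pim(\xm j) > \pi(\xm i)$ for every $\mu$; taking suprema gives $\climit_j\ge \climit_i$.

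The only real technical point is the last sub-case: one must verify that (P3)(b) is actually the correct hypothesis, which amounts to checking $\pib(\xn j)\ne S$ whenever $\pib(\xn i)=S$ and $j\in L\cup M$, but this is immediate from the definitions of $L$ and $M$ combined with (C)(d). Everything else is a routine combination of monotonicity, the $\nu$-uniformity of all relations across the conditions $r_\nu$, and the previously established Claims \ref{Claim-2.5} and \ref{Claim-2.9}.
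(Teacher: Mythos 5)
Your proposal is correct and takes essentially the paper's route: uniformize over $\mu$ via (C)(c), extract the monotonicity facts from (P2)/(P3), write each $\climit_i$ as a supremum of $\piz(\xm i)$ or $\pim(\xm i)$, and compare, with only cosmetic differences (you organize the cases by the class of $j$ and invoke Claim \ref{Claim-2.5}(b) where the paper just uses $\pim(\xm i)\le \pim(\xm j)$). One small citation slip: the inequality $\piz(\xm i)<\pim(\xm j)$ in your last sub-case comes from the contrapositive of (P3)(a) --- if $\pim(\xm j)\le \piz(\xm i)$ then $\pib(\xm i)=\pib(\xm j)\ne S$, contradicting $\pib(\xm i)=S$ --- rather than from (P3)(b), which is an implication in the opposite direction.
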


\begin{proof}
If $\xn i\preceq_{\nu} \xn j$ then $\xm i\preceq_{\mu} \xm j$ for
each ${\mu}<\oot$, and so we have:
\begin{enumerate}[(a)]
\item  $\piz (\xm i)\le \piz (\xm j)$, 
\item $\pim (\xm i)\le \pim
(\xm j)$, \item  if 
{$\pib (\xm i)\ne \pi_B(x_{\mu,j})$}
then $\piz
(\xm i)\le \pim (\xm j)$.
\end{enumerate}
Hence if $\pib (\xn i)\ne \pib (\xn j)$ then
\begin{equation}
\climit_i= \sup \{\piz (\xm i):{\mu}<\oot\} \le \sup \{\pim (\xm
j):{\mu}<\oot\}\le \climit_j.
\end{equation}
If $\pib (\xn i)= \pib (\xn j)$ then either $\{i,j\}\in \br K\cup
F;2;\cup \br K\cup \ml;2;$ and so
\begin{equation}
\climit_i= \sup \{\piz (\xm i):{\mu}<\oot\} \le \sup \{\piz (\xm
j):{\mu}<\oot\}= \climit_j,
\end{equation}
or $\{i,j\}\in \br L;2;\cup \br \mb;2;$ and so
\begin{equation}
\climit_i= \sup \{\pim (\xm i):{\mu}<\oot\} \le \sup \{\pim (\xm
j):{\mu}<\oot\} = \climit_j.
\end{equation}
\end{proof}

\begin{cclaim}
\label{Claim-2.11}  Assume $i,j\in
{\sigma}$. If $\xn i\preceq_{\nu} \xn j$ then either
${\climit}_i={\climit}_j$ or there is $a\in A$ with $\xn
i\preceq_{\nu} a\preceq_{\nu}\xn j $.
\end{cclaim}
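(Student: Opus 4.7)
If $i\in K$ then $\xn i\in A$ and we take $a=\xn i$; similarly if $j\in K$. So assume $i,j\in F\cup L\cup \ml\cup \mb$ and that $\climit_i<\climit_j$; the goal is to produce $a\in A$ strictly between $\xn i$ and $\xn j$ in $\preceq_{\nu}$. Note that $\climit_i\in Z_0\subseteq Z$ by the very definition of $Z_0$, a fact that will be crucial. Also recall that, thanks to goodness (H) and Claim \ref{Claim-2.5}(a), whenever $i\in F\cup\ml$ (resp.\ $\mb$) the value $\pi(\xn i)$ (resp.\ $\pim(\xn i)$) sits strictly between $\gu(\delta_i)$ and $\delta_i=\climit_i$ inside $J(\delta_i)$, while for $j$ the corresponding quantities are $>\sup(\delta_j\cap Z)\ge\climit_i$ (since $\climit_i\in Z\cap\delta_j$).

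The plan is to find an interval $\Lambda\in\mathbb I$ isolating $\xn i$ from $\xn j$ with $\topp\Lambda=\climit_i$, and then apply (P6) to produce the intermediate point. When $i\in F\cup\ml$ we take $\Lambda=J(\climit_i)=I_i$; Proposition \ref{Proposition-2.1} guarantees $\topp I_i=\climit_i$ because $\cf\climit_i=\oot$. Goodness gives $\bott I_i\le \gu(\delta_i)\le \pi(\xn i)<\climit_i=\topp I_i$, so the first isolating clause holds; the previous paragraph shows that $\topp I_i=\climit_i$ is bounded by $\pi(\xn j)$ or $\pim(\xn j)$, as dictated by whether $\pib(\xn i)=\pib(\xn j)$, so the remaining isolating clauses hold as well. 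For $i\in L\cup \mb$ the same construction works once we replace $\pi(\xn i)$ by $\pim(\xn i)$ in the verification and use clause (P6)(b) rather than (P6)(a). Either way, (P6) yields $z\in A_{\nu}$ with $\xn i\preceq_{\nu}z\preceq_{\nu}\xn j$ and $\pi(z)=\topp\Lambda=\climit_i$.

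It remains to argue that $z\in A$; this is the real heart of the claim. Write $z=\xn k$. From $\xn i\preceq_{\nu}z\preceq_{\nu}\xn j$ and Claim \ref{Claim-2.9} applied to both pairs, together with the partition constraint (E), the index $k$ is forced into a very restricted class determined by the classes of $i$ and $j$; in particular $k\notin L\cup \mb$ whenever $i$ or $j$ lies in $F\cup \ml$, because $\{i,k\}$ and $\{k,j\}$ must both belong to $[K\cup F]^2\cup[K\cup \ml]^2\cup[L]^2\cup[\mb]^2$ (after absorbing the $S$-case). Since $\pi(\xn k)=\climit_i\in Z$, condition (G) rules out $k\in F\cup \ml$ (these indices have $\pi\notin Z$), leaving only $k\in K$, i.e.\ $z\in A$. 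The genuinely delicate subcase is when both $i,j\in L\cup \mb$: then Claim \ref{Claim-2.9} forces $k\in L\cup \mb$ and we must instead exploit Claim \ref{Claim-2.10} together with the sharper location of $\climit_i$ provided by the block-orbit structure and clause (iii) of goodness to derive a contradiction with $\climit_i<\climit_j$ unless an element of $A$ already lies between $\xn i$ and $\xn j$ (for instance one of the $\xn m$ with $m\in K$ whose orbit witnesses $\climit_i\in\borb{\pib(\xn j)}$). This case splitting, and the bookkeeping showing that $\topp\Lambda\in Z$ together with the pairwise partition constraints leave $k\in K$ as the only possibility, is the main obstacle of the proof.
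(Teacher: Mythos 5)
Your overall strategy -- isolate $\xn i$ from $\xn j$ by an interval whose top is $\climit_i\in Z$, invoke (P6) to get an interpolant $z$ with $\pin(z)=\climit_i$, and then use $\pin(z)\in Z$ together with (\ref{az}) to force $z\in A$ -- is indeed the paper's strategy, but your execution has genuine gaps. The most important one is the case $i\in L\cup\mb$, which you first try to handle by ``replacing $\pin(\xn i)$ by $\pim(\xn i)$ in the verification'' and then, when both $i,j\in L\cup\mb$, only sketch (``exploit Claim \ref{Claim-2.10} \dots to derive a contradiction \dots unless an element of $A$ already lies between''). Neither step works as written: isolation is defined in terms of $\pin$, not $\pim$ (and for $i\in\mb$ the value $\pin(\xn i)$ need not even lie in $J(\climit_i)$), and (P6)(b) is only available when $\pin(x)\ne\pim(x)$, which can fail for $x\in L$ -- this is precisely why $L^+$ and the elements $u_x$ are introduced later. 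The paper avoids all of this by observing that the case cannot occur: if $i\notin K$ and $\climit_i<\climit_j$, then $i\in L\cup\mb$ would force, via Claim \ref{Claim-2.9}, that $\{i,j\}\in[L]^2\cup[\mb]^2$, and then Claim \ref{Claim-2.5}(b) gives $\climit_i=\climit_j$, a contradiction; hence $i\in F\cup\ml$ always. Your ``genuinely delicate subcase'' is therefore vacuous, but as your proof stands it is an unproved gap.

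Two further points. First, for $i\in F$ you appeal to (P6)(a), which requires $\pib(x)=S$ and so does not apply; you must use (P6)(b), and for that you have to check $\pin(\xn i)\ne\pim(\xn i)$ -- the paper gets this from $\pim(\xn i)\in Z$, $i\notin K$ and (\ref{az}). Second, your argument that the interpolant's index $k$ avoids $L\cup\mb$ rests on applying Claim \ref{Claim-2.9} to the pairs $\{i,k\}$ and $\{k,j\}$, but when $\pib(\xn i)=S$ (i.e.\ $i\in\ml$) that claim gives no information about $k$ at all, and when $\climit_i=\pim(\xn j)$ (the boundary subcase of the paper's Case 2) the interpolant may lie in the block of $\xn j$. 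The paper handles this classification with Claim \ref{Claim-2.1}(a)--(c), distinguishing $\Lambda^+<\pim(\xn j)$ from $\Lambda^+=\pim(\xn j)$, which is the piece of bookkeeping your proposal is missing.
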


\begin{proof}
Put $x_i=x_{\nu,i}, x_j=x_{\nu,j}$. Assume that $i,j\not\in K$ and
$\delta_i\neq \delta_j$. By Claim \ref{Claim-2.10}, we have $\delta_i
<\delta_j$. Since $i\in L\cup M$ implies $\delta_i =\delta_j$, we
have that $i\in F\cup D$, and so $\pi(x_i)<\delta_i$, $\mbox{
cf}(\delta_i)=\kappa^+$ and $J(\delta_i)^+=\delta_i$. We
distinguish the following cases:

\vspace{2mm}\noindent {\bf Case 1}. $i\in D$ and $j\in D\cup L\cup
M$.

Since $\delta_i <\delta_j$, we have that $J(\delta_i)$ isolates
$x_i$ from $x_j$. Also, note that if $j\in L\cup M$, then
$J(\delta_i)^+ = \delta_i < \pi_{-}(x_j)$. By (P6)(a), we infer
that there is an $x=x_{\nu,k}\in A_{\nu}$ such that
$\pi(x)=\delta_i$ and $x_i\prec_{\nu} x \prec_{\nu} x_j$. Now, by
Claim \ref{Claim-2.1}(a)-(b), we deduce that $k\in K\cup D$. But as
$\delta_i\in Z$, by (\ref{az}), we have that $x\in A$, and so we are
done.

\vspace{2mm}\noindent {\bf Case 2}. $i\in D$ and $j\in F$.

We have that $\pi_B(x_i)\neq \pi_B(x_j)$. By using (P3), we infer
that $\delta_i\leq \pi_{-}(x_j)$, and so $J(\delta_i)$ isolates
$x_i$ from $x_j$. If $\delta_i < \pi_{-}(x_j)$, we proceed as in
Case 1. So, assume that $\delta_i = \pi_{-}(x_j)$. By (P6)(a), we
deduce that there is an $x=x_{\nu,k}\in A_{\nu}$ such that
$\pi(x)=\delta_i$ and $x_i\prec_{\nu} x \prec_{\nu} x_j$. By Claim
\ref{Claim-2.1}(c), we infer that $k\in K\cup F$. Then  as $\delta_i\in Z$, we
have that $x\in A$ by (\ref{az}).

\vspace{2mm}\noindent {\bf Case 3}. $i,j\in F$.

We have that $\pi_B(x_i)=\pi_B(x_j)\neq S$ and $J(\delta_i)$
isolates $x_i$ from $x_j$. Since $\pi_{-}(x_i)\in Z$ and we are
assuming that $i\not\in K$, we infer that $\pi(x_i)\neq
\pi_{-}(x_i)$. Now, applying (P6)(b), we deduce that there is an
$x=x_{\nu,k}\in A_{\nu}$ such that $\pi(x)=\delta_i$ and
$x_i\prec_{\nu} x \prec_{\nu} x_j$. Now we deduce from Claim \ref{Claim-2.1}(a)
that $k\in K\cup F$. Then  as  $\delta_i\in Z$, we have that $x\in
A$ by (\ref{az}).
\end{proof}

\begin{cclaim}
\label{Claim-2.12} 
If $x\in A$ and 
$y\in
A_{\nu}$, and $x$ and $y$ are compatible but incomparable in
$r_{\nu}$, then $\ifu_{\nu}\{x,y\}\in A$.
\end{cclaim}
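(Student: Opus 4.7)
Plan: Suppose for contradiction that $z := \ifu_\nu\{x,y\}$ is not in $A$, and write $x = x_{\nu,i}$ (with $i \in K$), $y = x_{\nu,j}$, $z = x_{\nu,k}$. Since $z$ is strictly below both $x$ and $y$, Claim \ref{Claim-2.9} applied to $z \prec_\nu x$ forces $k \in K \cup F \cup D$; the alternative $k \in K$ would yield $z \in A$ directly, so $k \in F \cup D$. Similarly, if $j \in K$ then $y \in A$, and condition (C)(f) gives $\ifu_\mu\{x,y\} = \ifu_\nu\{x,y\} = z$ for every $\mu$, so $z \in \bigcap_\mu A_\mu = A$, a contradiction. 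Hence $j \notin K$ too.

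The core of the plan is to produce $z', z'' \in A$ with $z \prec_\nu z' \preceq_\nu x$ and $z \prec_\nu z'' \preceq_\nu y$, both at the common $\pi$-level $\delta_k$. Take $\Lambda := J(\delta_k)$, so that $\Lambda^+ = \delta_k \in Z_0 \subseteq Z$, with $\cf \delta_k = \kappa^+$ by Claim \ref{Claim-2.5}(a). The goodness condition (i) supplies $\pi(z) \geq \gamma(\delta_k) > J(\delta_k)^- = \Lambda^-$; Claim \ref{Claim-2.10} gives $\delta_k \leq \delta_i$; and when $\pib(z) \neq \pib(x)$, (P3) combined with $\pim(x) \in Z_0$ (since $i \in K$, $\pib(x) \neq S$) and $\pi(z) > \sup(Z \cap \delta_k)$ forces $\pim(x) \geq \delta_k$. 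So $\Lambda$ isolates $z$ from $x$, and applying (P6)(a) if $\pib(z) = S$, or (P6)(b) otherwise --- noting $\pi(z) \neq \pim(z)$ in the second case because $\pim(z) \in Z$ while $\pi(z) \notin Z$ by goodness --- produces $z' \in A_\nu$ with $z \preceq z' \preceq x$ and $\pi(z') = \delta_k$. Claim \ref{Claim-2.1} places the type of $z'$ in $K \cup F \cup D$, and $\pi(z') = \delta_k \in Z$ together with condition $(\ref{az})$ puts $z' \in A$. A symmetric argument for the $y$-side --- verifying the isolation of $\Lambda$ at $y$ via the goodness conditions applied to $y$ and the bound $\delta_k \leq \sup(Z \cap \delta_j) < \gamma(\delta_j) \leq \pim(y)$ (or $\pi(y)$), as the type of $j$ dictates --- yields $z'' \in A$.

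Since $z', z'' \in A$ both sit $\succeq_\nu z$ at the common $\pi$-level $\delta_k$, by (P2) they are either equal or incomparable. Either way $\ifu_\nu\{z',z''\}$ is a common lower bound of $\{x,y\}$ that is $\succeq_\nu z$, and hence equals $z$ by the minimality of $z$. But $z', z'' \in A$, so by (C)(f), $\ifu_\mu\{z',z''\} = z$ for every $\mu$, placing $z$ in every $A_\mu$ and thus in $A$ --- the desired contradiction.

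The main obstacle is ruling out the boundary configurations $\delta_k = \delta_i$ (where (P6) only returns $z' = x$) and $\delta_k = \delta_j$ (symmetric on the $y$-side). For $\delta_k = \delta_i$ I expect to derive a contradiction directly from (P5) via the structural observation $\oorb{\delta_i} \cap J(\delta_i) \subseteq \{J(\delta_i)^-\}$: when $k \in F$, (P5) forces $\pi(z) \in \orb x = \borb{\pib(x)} \cup (\oorb{\delta_i} \setminus \pim(x))$, but $\pi(z) \in J(\delta_i) \setminus \gamma(\delta_i)$ from goodness, combined with $\borb{\pib(x)} \subseteq \pim(x)$ and $\pi(z) \geq \pim(x)$, leaves no available option; the $k \in D$ subcase uses $\pib(z) = S$ analogously. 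The symmetric exclusion of $\delta_k = \delta_j$ is the genuinely delicate piece, requiring a parallel (P5)-analysis on $\orb y$ with a finer use of the goodness conditions (ii) and (iii) for $j \in L \cup M$ to control $\oorb{\pi(y)}$ relative to $J(\delta_k)$. Orchestrating these boundary exclusions alongside the main (P6) construction is where the real technical effort lies.
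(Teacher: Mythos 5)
Your main case (via (P6), Claim \ref{Claim-2.1} and condition (\ref{az})) and your final amalgamation step (via (C)(f)) are sound, but the proposal has a genuine gap exactly where you defer the work: the boundary case $\delta_k=\delta_j$ is not proved, and it cannot be disposed of by the single-condition (P5)/(P6)/goodness analysis you envisage. For example, if $j\in L$ and $\delta_j=\delta_k$, then $\pi_-(y)=\delta_k$, and by (P3) any $w\preceq_{\nu}y$ with $\pi(w)=\delta_k$ must lie in the block of $y$; by (E) no element of $A$ shares a block with an element indexed in $L$, so the element $z''\in A$ at level $\delta_k$ below $y$ that your core plan requires provably does not exist. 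Nor does the hoped-for contradiction from $\pi(z)\in \orb y$ appear: here $\orb y\supseteq \borb{\pib(y)}$ may well contain ordinals of $[\gamma(\delta_k),\delta_k)$ (namely $\eps{\delta_k}{\nu}$ for large ${\nu}$ below the block index), and goodness (\ref{good})(iii) makes this set large rather than small, so membership is satisfiable and nothing is excluded. The same leak occurs in your sketch for $\delta_k=\delta_i$: when $k\in D$ and $\pib(x)=\<{\zeta},{\eta}\>$ with ${\zeta}=\pi(x)=\delta_i$, (P5) only forces $\pi(z)\in \borb{\pib(x)}$, and the ordinals $\eps{\zeta}{\nu}$, ${\nu}<{\eta}$, can lie in $[\gamma(\delta_k),\delta_k)$, so ``no available option'' is not justified (on the $x$-side this is harmless, since for $\delta_i=\delta_k$ you could simply take $z'=x$; but the $y$-side cannot be repaired this way because $y\notin A$).

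The missing ingredient is a count across the indices ${\mu}<\oot$, which is precisely the paper's proof and makes the whole construction unnecessary: write $\ifu_{\nu}\{x,y\}=\xn k$; by Claim \ref{Claim-2.9} (as you note) $k\in K\cup F\cup D$; if $k\in F\cup D$, then since each $h_{{\nu},{\mu}}$ fixes $x\in A$ and is an isomorphism, (P5) applied in every $r_{\mu}$ gives $\piz(\xm k)\in \orb x$ for all ${\mu}<\oot$; but these values are non-decreasing with supremum $\delta_k$ of cofinality $\oot$ (Claim \ref{Claim-2.5}(a)), hence take $\oot$ distinct values, contradicting $|\orb x|\le \oo$. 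So $k\in K$ and $\ifu_{\nu}\{x,y\}\in A$, with no appeal to (P6), goodness, or $Z$, and no case analysis on blocks. I recommend replacing your construction by this counting argument, or at the very least supplying it for the $\delta_k=\delta_j$ case, without which the proposal is incomplete.
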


\begin{proof}
Indeed, $\piz(\ifu_{\nu}{\{x,y\}})\in \orb x$ by (P5) and $|\orb x|\le
\oo$.
\end{proof}

\newcommand{\preq}{\preceq_q}
\newcommand{\aq}{A_q}
\newcommand{\ap}{A_p}
\newcommand{\ip}{\ifu_p}
\newcommand{\iq}{\ifu_q}

\newcommand{\xp}[1]{x^p_{#1}}
\newcommand{\xq}[1]{x^q_{#1}}

\begin{cclaim}
\label{Claim-2.13} Assume that  $\xn i $
and $ \xn j$ are compatible but incomparable in $r_{\nu}$. Let
$\xn k=\ifu_{\nu}\{\xn i,\xn j\}$. Then either $\xn k\in A$ or
${\climit}_i={\climit}_j={\climit}_k$.
\end{cclaim}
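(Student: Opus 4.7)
The plan is to run a contradiction argument using the infimum property of $\ifu_\nu$ together with Claims \ref{Claim-2.10}, \ref{Claim-2.11} and \ref{Claim-2.12}. First, by Claim \ref{Claim-2.10} applied to $\xn k\preceq_\nu \xn i$ and $\xn k\preceq_\nu \xn j$, I immediately get $\climit_k \le \climit_i$ and $\climit_k \le \climit_j$. So the statement reduces to showing that if $\xn k\notin A$ then $\climit_k = \climit_i$ (the equality $\climit_k=\climit_j$ then follows by the symmetric argument).

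Suppose then, toward a contradiction, that $\xn k\notin A$ and $\climit_k<\climit_i$. Apply Claim \ref{Claim-2.11} to $\xn k\preceq_\nu \xn i$: there is $a\in A$ with $\xn k\preceq_\nu a\preceq_\nu \xn i$. Since $\xn k\notin A$ and $a\in A$, we have $\xn k\prec_\nu a$. The next step is to verify that $a$ and $\xn j$ are compatible but incomparable in $r_\nu$. They are compatible because $\xn k$ is a common lower bound. They cannot satisfy $\xn j\preceq_\nu a$, since combined with $a\preceq_\nu \xn i$ this would force $\xn j\preceq_\nu \xn i$, contradicting incomparability of $\xn i,\xn j$. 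They cannot satisfy $a\preceq_\nu \xn j$ either, for then $a$ would be a common lower bound of $\xn i$ and $\xn j$, whence by the infimum property (P4) $a\preceq_\nu \xn k$; combined with $\xn k\preceq_\nu a$ this yields $a=\xn k$, contradicting $\xn k\notin A$ and $a\in A$.

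Now set $b=\ifu_\nu\{a,\xn j\}$. Since $a\in A$ and $a,\xn j$ are compatible but incomparable, Claim \ref{Claim-2.12} gives $b\in A$. On the one hand, $b\preceq_\nu a\preceq_\nu \xn i$ and $b\preceq_\nu \xn j$, so $b$ is a common lower bound of $\xn i$ and $\xn j$ and hence $b\preceq_\nu \xn k$ by (P4). On the other hand, $\xn k$ is a common lower bound of $a$ and $\xn j$, so by (P4) again $\xn k\preceq_\nu b$. Thus $b=\xn k$, forcing $\xn k\in A$, the desired contradiction.

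I do not expect any real obstacle here: the whole argument is a short chase through the defining properties of $\ifu_\nu$ using the previously established Claims \ref{Claim-2.10}--\ref{Claim-2.12}. The one point that requires care is the verification that $a$ and $\xn j$ are incomparable rather than merely compatible, since only then can Claim \ref{Claim-2.12} be invoked to pull $b=\ifu_\nu\{a,\xn j\}$ back into $A$; this is precisely where the assumption $\xn k\notin A$ is used.
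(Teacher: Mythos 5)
Your proof is correct and follows essentially the same route as the paper: when $\climit_k\ne\climit_i$, use Claim \ref{Claim-2.11} to interpolate an element of $A$ between $\xn k$ and $\xn i$, then identify $\xn k$ with the infimum of that element and $\xn j$ and invoke Claim \ref{Claim-2.12} to force $\xn k\in A$, a contradiction. You merely make explicit two points the paper leaves implicit (the incomparability of $a$ and $\xn j$, and the identification $\xn k=\ifu_\nu\{a,\xn j\}$ via (P4)), which is fine.
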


\begin{proof}
Assume $x_{\nu,k}\not\in A$. Then $k\not\in
K$. If 
${\climit}_k\ne {\climit}_i$, we infer that there is $b\in A$
with $x_{\nu,k}\preceq_{\nu} b \preceq_{\nu} x_{\nu,i}$ by Claim 
\ref{Claim-2.11}..
 So
$\xn k=\ifu_{\nu}\{b,\xn j\}$ and thus $\xn k\in A$ by Claim \ref{Claim-2.12},
contradiction.

Thus ${\climit}_i={\climit}_k$, and similarly
${\climit}_j={\climit}_k$.
\end{proof}

After this preparation fix $\{{\nu},{\mu}\}\in \br \oot;2;$. We do
not assume that ${\nu}<{\mu}$! Let $p=r_{\nu}$ and $q=r_{\mu}$.
Our purpose is to show that $p$ and $q$ are compatible. Write
$p=\<\ap,\prep,\ip\>$ and $q=\<\aq,\preq,\iq\>$, $\xp i=\xn i$ and
$\xq i=\xm i$, $\climit_{\xp i}=\climit_{\xq i}=\climit_i$.

If $s=\xp i$ write $s\in K$ iff $i\in K$. Define $s\in L$, $s\in
F$, $s\in \mb$, $s\in \ml$ similarly.

In order to amalgamate conditions $p$ and $q$, we will use a
refinement of the notion of amalgamation given in \cite[Definition
2.4]{M}. 

 Let
$A'=\{\xp i:i\in F\cup D\cup M\cup L\}$.

Let $\rank:\<A',\prep\restriction A'\>\to \theta$ be an
order-preserving injective function for some ordinal
$\theta<\oo$. 

For $x\in A'$, by induction on $\rank(x)<\theta$ choose
${\beta}_{x}\in {\delta}$ as follows:

Assume that $\rank(x)={\tau}$ and ${\beta}_{z}$ is defined
provided
 $\rank(z)<{\tau}$.

Let
\begin{equation}
{\beta}_x=\min \bigl ( \bigl(\eorb{{\climit}_x}\cap
[\underline{\gamma}(\delta_x),\gamma(\delta_x))) \setm \sup\{
  {\beta}_{z}: z\prec_p x
\}\bigr ).
\end{equation}

Since $z\prep  x$ implies $\climit_z\le \climit_x$ by Claim \ref{Claim-2.10},
we have ${\beta}_z <\gamma(\delta_x)$ for $z\prec_p x$. Since
{$\cf(\gamma(\delta_x))= \oo$}
and $|A'|<{\kappa}$ we have
$\sup\{{\beta}_{z}: z\prec_p x \} <\gamma(\delta_x)$, so
${\beta}_x$ is always defined.

For $x\in A'$ let
\begin{equation}
y_x=\left\{
\begin{array}{ll}
\<{\beta}_x,\rank(x)\>&\text{if $x\in L\cup \ml\cup \mb$},\\
\<{\zeta},{\eta},{\beta}_x,\rank (x)\>&\text{if $x\in F$, $\pib
({x})=\<{\zeta},{\eta}\>$.}
\end{array}
\right.
\end{equation}

Put
\begin{equation}
Y=\{y_x:x\in A'\}.
\end{equation}

\newcommand{\gbar}{\bar g}

For $x\in A'$ put
\begin{equation}
\text{ $g(y_x)=x$ and $\gbar(y_x)=x'$},
\end{equation}
where $x'$ is the ``twin'' of $x$ in $\aq$ (i.e.
$h_{\nu,\mu}(x)=x'$).

\vspace{2mm} We will include the elements of $Y$ in the domain of
the amalgamation $r$ of $p$ and $q$. In this way, we will be able
to define the infimum in $r$ of elements $s,t$ where $s\in
A_p\setminus A_q$ and $t\in A_q\setminus A_p$.

\vspace{2mm} We need to prove some basic facts.

\begin{cclaim}
\label{Claim-2.14}  If $x\in A'$ then

$$ \overline{o}(\delta_x)\cap
[\underline{\gamma}(\delta_x),\gamma(\delta_x))\subset o^*(x)\cap
o^*(x').$$
\end{cclaim}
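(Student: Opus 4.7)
The plan is a case analysis on which of $L, M, F, D$ contains $i$ with $x=\xn i$. Since (C)(c)--(d) preserve the block discriminations appearing in the argument, every assertion made for $x$ transfers to $x'$ verbatim, so I treat only the $x$-side.

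The case $i\in L$ is immediate. If $\cf\delta_i=\oot$ this is precisely condition (iii) of goodness in Definition \ref{Definition 2.8}. If $\cf\delta_i=\oo$, then $i\in\sigma_1$ gives $\gu(\delta_i)=\delta_i$, while $\pib(x)=\<\delta_i,\eta\>$ together with $\cf\delta_i=\oo$ forces $\orb{x}\supseteq\borb{\pib(x)}=\eorb{\delta_i}$, which already contains the target set.

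For $i\in D\cup F\cup M$ I set $c=\piz(x)$ when $i\in D\cup F$ and $c=\pim(x)$ when $i\in M$. Claim \ref{Claim-2.6} gives $c\in\jint{\delta_i}\setm\gu(\delta_i)$ with $\cf\delta_i=\oot$ (Claim \ref{Claim-2.5}). Since $\gu(\delta_i)>\bott{\jint{\delta_i}}$ in either of the sub-cases $i\in\sigma_1,\sigma_2$, the element $c$ lies strictly in the interior of $\jint{\delta_i}$; by disjointness of $\ical_m$-intervals, $c$ is then the left endpoint of no $\ical_m$-interval for $m\le\nn{\delta_i}$, so $\nn{c}>\nn{\delta_i}$, $\contint{c}{m}=\contint{\delta_i}{m}$ for $m\le\nn{\delta_i}$, and $\contint{c}{\nn{\delta_i}}=\jint{\delta_i}$. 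Now fix $\beta\in\eorb{\delta_i}\cap[\gd(\delta_i),\gu(\delta_i))$; then $\beta<c$, and either $\beta\in\incof{\contint{\delta_i}{m}}\cap\delta_i$ for some $m<\nn{\delta_i}$ or $\beta\in\incof{\jint{\delta_i}}\cap\delta_i$. In both subcases the containing interval matches some $\contint{c}{m'}$ with $m'\le\nn{\delta_i}<\nn{c}$, so $\beta\in\oorb{c}$. Finally, for $i\in D$ we have $\orb{x}=\oorb{c}$ directly; for $i\in F$ with $\pib(x)=\<\zeta,\eta\>$, $\zeta\in Z_0\cap\delta_i$ yields $\zeta\le\sup(\delta_i\cap Z)<\gd(\delta_i)\le\beta$ by the defining property of $\xi_i$, so $\beta\in\oorb{c}\setm\zeta\subseteq\orb{x}$; for $i\in M$, $\orb{x}\supseteq\borb{\<c,\eta_\nu\>}\supseteq\oorb{c}$ whether $\cf c=\oo$ or $\cf c=\oot$.

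The main obstacle is the tree-theoretic bookkeeping of the third paragraph: deriving $\nn{c}>\nn{\delta_i}$ and the matching of containing intervals purely from the fact that $c$ is strictly interior to $\jint{\delta_i}$, together with the verification of $\zeta<\beta$ in the $F$-case via the defining property of $\xi_i$ and the closure of $Z$.
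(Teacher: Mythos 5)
Your proof is correct and follows essentially the same route as the paper's: a case split into $L$ (goodness (iii), or the block-orbit definition when $\cf\delta_i=\oo$) versus $D\cup F\cup M$ (goodness (i)--(ii) plus the interval bookkeeping showing $\eorb{\delta_i}\cap[\gd(\delta_i),\gu(\delta_i))\subseteq\oorb{c}$, with the $\pim(x)\in Z$ argument handling the truncation in the $F$-case), the only difference being that you spell out the tree bookkeeping the paper leaves implicit. One cosmetic slip: $\contint{c}{m}=\contint{\delta_i}{m}$ should be asserted only for $m<\nn{\delta_i}$, since $\delta_i\notin\jint{\delta_i}=\contint{c}{\nn{\delta_i}}$; this does not affect the argument.
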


\begin{proof} Let $\al\in \overline{o}(\delta_x)\cap
[\underline{\gamma}(\delta_x),\gamma(\delta_x)).$  It is enough to
show that $\al\in o^*(x)$. Note that if $x\in D$, then $\al\in
o(\pi(x))=o^*(x)$. If $x\in M$, we have that $\al\in
o(\pi_{-}(x))\subset o_B(\pi_B(x)) \subset o^*(x)$. Also, if $x\in
L$ then as $p$ is good we have that $\al\in o_B(\pi_B(x)) \subset
o^*(x)$. Now, assume that $x\in F$. Since $\pi_{-}(x)\in Z$, we
have that $\pi_{-}(x) < \underline{\gamma}(\delta_x)$, hence
$\al\in o(\pi(x))\setminus \pi_{-}(x)$, and so $\al\in o^*(x)$.
\end{proof}

Note that we obtain as an immediate consequence of Claim \ref{Claim-2.14} that
$\beta_x\in o^*(x)\cap o^*(x')$ for every $x\in A'$.

\begin{cclaim}
\label{Claim-2.15} If $x \in A'$ then
\begin{equation}
\orb {y_x} \supset  (\orb {x}\cap \piz (y_x))\cup \{{\beta}_z:
{\climit}_z={\climit}_x\land z \prec_p x\}.
\end{equation}
\end{cclaim}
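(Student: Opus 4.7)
The plan is to split the claimed inclusion into two parts: (I) $\orb{x}\cap \beta_x \subset \orb{y_x}$, and (II) $\beta_z \in \orb{y_x}$ for every $z \prec_p x$ with $\climit_z = \climit_x$. First unfold $y_x$: when $x \in L\cup \ml \cup \mb$, $y_x = \<\beta_x, \rank(x)\> \in \block S$, so $\orb{y_x} = \oorb{\beta_x}$; when $x \in F$ with $\pib(x) = \<\zeta,\eta\>$, one has $\orb{y_x} = \borb{\<\zeta,\eta\>} \cup (\oorb{\beta_x}\setminus \zeta)$. Since $\piz(y_x)=\beta_x$, these are the two inclusions to establish.

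For (I), the $F$-case summand $\borb{\pib(x)}$ is shared between $\orb{x}$ and $\orb{y_x}$, so the block part is immediate. For $x \in L\cup \mb$, the inequality $\pim(x) > \beta_x$ (because $\pim(x) = \climit_x$ when $x\in L$, and by Claim \ref{Claim-2.6} combined with $(\ref{good})$ when $x\in \mb$) gives $\orb{x}\cap \beta_x = \borb{\pib(x)}\cap \beta_x \subset \eorb{\pim(x)}\cap \beta_x$; for $x \in \ml$, $\orb{x}\cap \beta_x = \oorb{\pi(x)} \cap \beta_x$. In every case (I) reduces to the following tree-theoretic fact: any $\gamma < \beta_x$ belonging to $\incof{I}$ for some interval $I\in \mathbb I$ in the chain that describes $\eorb{\climit_x}$ (i.e., $I \in \{\contint{\climit_x}{m} : m<\nn{\climit_x}\}\cup \{J(\climit_x)\}$) lies in $\oorb{\beta_x}$. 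This holds because $\bott{I} \leq \gamma < \beta_x$ forces $\beta_x$ not to be the left endpoint of $I$, so $I = \contint{\beta_x}{k}$ for $k$ equal to the level of $I$ in $\mathbb I$, and this $k$ is strictly less than $\nn{\beta_x}$; consequently $\gamma \in \incof{\contint{\beta_x}{k}}\cap \beta_x \subseteq \oorb{\beta_x}$.

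For (II), $\climit_z = \climit_x$ gives $\delta_z = \delta_x$ and identical values of $\underline{\gamma}$ and $\gamma$, and the minimality clause in the definition of $\beta_x$ forces $\beta_z < \beta_x$. Both $\beta_z, \beta_x$ sit in $\eorb{\climit_x}\cap [\underline{\gamma}(\delta_x),\gamma(\delta_x))$, and the same tree-theoretic fact places $\beta_z$ in $\oorb{\beta_x}$. For $x \in F$ the additional requirement $\beta_z > \zeta$ holds because $\zeta = \pim(x) \in Z_0 \subset Z\cap \delta_x$, so by the definition of $\underline{\gamma}(\delta_x)$ we have $\zeta \leq \sup(Z\cap \delta_x) < \underline{\gamma}(\delta_x) \leq \beta_z$; hence $\beta_z \in \oorb{\beta_x}\setminus \zeta \subset \orb{y_x}$.

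The main obstacle is the tree-theoretic fact underlying both (I) and (II). The delicate bit is verifying that the containing interval $I$ (with $\gamma \in \incof{I}$) has $\beta_x$ strictly inside rather than as a left endpoint, so that $I$ appears among the $\contint{\beta_x}{k}$ for $k < \nn{\beta_x}$. This follows once one notes that $\bott{I} \leq \gamma < \beta_x$, but requires organizing the nested structure of the intervals in $\eorb{\climit_x}$ and distinguishing $\oorb{\pi(x)}$, $\borb{\pib(x)}$, and $\eorb{\climit_x}$ uniformly across the $F, L, D, M$ cases. Once this bookkeeping is arranged, both (I) and (II) follow cleanly, and the claim is established.
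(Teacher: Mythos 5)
Your proposal is correct and follows essentially the same route as the paper's proof: both parts are reduced, through the case split $D$ / $L\cup \mb$ / $F$ together with goodness (which pushes $\piz(x)$, resp.\ $\pim(x)$, beyond $\gu({\climit}_x)>{\beta}_x$ and keeps $Z\cap{\climit}_x$ below $\gd({\climit}_x)$), to the same tree fact about $\mathbb I$ — an element of $\incof{I}$ below ${\beta}_x$, for an interval $I$ containing ${\beta}_x$ with $\bott I<{\beta}_x$, lies at a level $<\nn{{\beta}_x}$ and hence in $\oorb{{\beta}_x}$ — which is exactly the fact the paper invokes in its Cases 1--3 and in the ``$\alpha,\beta\in\incof{I}$, $\alpha<\beta$ implies $\alpha\in\oorb{\beta}$'' remark. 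Your explicit verification that ${\beta}_z>\pim(x)$ in the $F$-case (via $\pim(x)\in Z$ and the definition of $\gd({\climit}_x)$) is a detail the paper leaves implicit, but the argument is the same.
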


\begin{proof}  Note that if $I\in \mathbb I$ and $\alpha,\beta\in E(I)$ with
$\alpha <\beta$, we have that $\alpha\in o(\beta)$. By using this
fact, it is easy to verify that $\{\be_z: \delta_z = \delta_x$ and
$z \prec_p x\} \subset o^*(y_x)$.

 Now we prove that $o^*(y_x)\supset o^*(x)\cap \pi(y_x)$.
Suppose that $\zeta\in o^*(x)\cap \pi(y_x)$. We distinguish the
following three cases:

\vspace{2mm}\noindent {\bf Case 1}. $x\in D$.

 Then 
{$x,y_x\in B_S$}, and so we have $o^*(x) = o(\pi(x))$ and $o^*(y_x) =
o(\pi(y_x))=o(\beta_x)$. 
{Let $k=j({\delta}_x)$, i.e. $J(\delta_x)\in \ical_k$.}
Since $\zeta \in o(\pi(x))\cap
\pi(y_x)$, we infer that $\zeta \in E(I(\pi(x),m))\cap \pi(y_x)$
for some $m\leq k$. Note that for $m\leq k$ we have $I(\pi(x),m)=
I(\pi(y_x),m)$. So, $\zeta\in o(\pi(y_x))= o^*(y_x)$.

\vspace{2mm}\noindent {\bf Case 2}. $x\in L\cup M$.

Since $\zeta\in o^*(x)\cap \pi(y_x)$, we infer that $\zeta\in
o_B(\pi_B(x))$. Then as $y_x\in B_S$, we can show that $\zeta \in
o(\pi(y_x))=o^*(y_x)$ by using an argument similar to the one
given in Case 1.

\vspace{2mm}\noindent {\bf Case 3}. $x\in F$.

We have $\pi_B(x) = \pi_B(y_x)\neq S$. Put $(\xi,\eta)= \pi_B(x) =
\pi_B(y_x)$. So,

$o^*(x) = o_B((\xi,\eta))\cup (o(\pi(x))\setminus \pi_{-}(x))$,

$o^*(y_x) = o_B((\xi,\eta))\cup (o(\pi(y_x))\setminus
\pi_{-}(x))$.

So we may assume that $\zeta\in o(\pi(x))\setminus \pi_{-}(x)$,
and then we can proceed as in Case 1.
\end{proof}

\begin{cclaim}
\label{Claim-2.16} There are no  $y\in
Y$ and
 $a\in A$ such that $a\prep g(y),\overline{g}(y)$ and
$\pi(y) \leq \pi(a)$.
\end{cclaim}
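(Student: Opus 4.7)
The plan is a proof by contradiction: I suppose that $y=y_x$ for some $x\in A'$ and $a\in A$ satisfy $a\prep x$, $a\preq x'$, and $\beta_x=\pi(y_x)\le \pi(a)$, and aim for a contradiction. Because the isomorphism $h_{\nu,\mu}$ from (C) fixes $A$ pointwise, the two conditions $a\prep x$ and $a\preq x'$ are equivalent, so I may work with $a\prep x$ alone. Since $a\in A$ there is $j\in K$ with $a=x_{\nu,j}$, whence $\pi(a)=\delta_j\in Z_0\subseteq Z$. By the very choice of $\underline{\gamma}(\delta_x)=\epsilon^{I_x}_{\xi_x}$ one has $\sup(Z\cap\delta_x)<\underline{\gamma}(\delta_x)\le\beta_x$, so it will suffice to prove $\pi(a)<\delta_x$: for then $\pi(a)\in Z\cap\delta_x$, giving $\pi(a)\le \sup(Z\cap\delta_x)<\beta_x$, which contradicts $\beta_x\le\pi(a)$.

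To establish $\pi(a)<\delta_x$ I will case-split on the index $i$ with $x=x_{\nu,i}$. If $i\in F\cup D$ then Claim \ref{Claim-2.5}(a) gives $\pi(x)<\delta_x$, and (P2) applied to $a\prep x$ yields $\pi(a)<\pi(x)<\delta_x$. If instead $i\in L\cup M$, then $\pi_B(x)=\langle\zeta,\eta\rangle\ne S$; since $j\in K$ and $i\notin K$, condition (E) --- which enumerates the admissible pairs of indices whose blocks can coincide --- forces $\pi_B(a)\ne\pi_B(x)$. The contrapositive of (P3)(a) then rules out $\zeta\le\pi(a)$, so $\pi(a)<\zeta=\pi_-(x)$. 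For $i\in L$ one has $\pi_-(x)=\delta_x$ by the definition of $\delta_i$; for $i\in M$, Claim \ref{Claim-2.5}(a) gives $\pi_-(x)<\delta_x$. Either way $\pi(a)<\delta_x$, as required.

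The engine of the argument is really the choice of $\underline{\gamma}(\delta_x)$ as the first point of $E(J(\delta_x))$ strictly above $\sup(Z\cap\delta_x)$: this guarantees that every element of $Z$ below $\delta_x$ automatically lies below $\beta_x$, which is why reducing to the bound $\pi(a)<\delta_x$ is the entire game. All the case analysis has to do is verify that order-theoretic bound; it is immediate from Claim \ref{Claim-2.5}(a) when $x\in F\cup D$, and when $x\in L\cup M$ it is extracted from the block-separation encoded in (E) together with (P3). I do not anticipate any subtler obstruction beyond this routine verification.
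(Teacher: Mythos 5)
Your proof is correct and takes essentially the same route as the paper: since $\pi(a)\in Z$ and $\pi(y)=\beta_{g(y)}\ge\underline{\gamma}(\delta_{g(y)})>\sup(Z\cap\delta_{g(y)})$, everything reduces to showing $\pi(a)<\delta_{g(y)}$, which you get from Claim \ref{Claim-2.5}(a) together with the block conditions (E) and (P3). Your case split ($F\cup D$ versus $L\cup M$) is organized slightly differently from the paper's ($F\cup D\cup M$ versus $L$) and spells out the $M$ case more explicitly, but the underlying argument is the same.
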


\begin{proof}
Assume that $y\in Y$. Put $x=g(y)$ and $I=J(\delta_{x})$. Note
that if $x\in F\cup D\cup M$, then since $\mbox{ sup}(I\cap
Z)<\underline{\gamma}(\delta_{x})$  we infer that there is no
$a\in A$ such that $a \prep x$ and $\pi(a)\geq \pi(y)$.

Now, suppose that $x\in L$. Note that there is no $a\in A$ such
that $a \prec_p x$ and $\pi_B(a)= \pi_B(x)$. Also, as $\mbox{
sup}(\delta_{x}\cap Z)<\underline{\gamma}(\delta_{x})$, we infer
that there is no $a\in A\cap B_S$ such that $a\prep x$ and
$\pi(a)\geq \pi(y)$.
\end{proof}

\begin{cclaim}
\label{Claim-2.17} If $x\in F\cup D\cup
M$, then there is no interval that isolates $y_x$ from $x$.
\end{cclaim}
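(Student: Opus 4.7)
My plan is to derive a contradiction from the existence of an isolating $\Lambda\in\mathbb I$. First I would unpack what ``$\Lambda$ isolates $y_x$ from $x$'' means for each sub-case. When $x\in F$ we have $\pib(y_x)=\pib(x)=\<{\zeta},{\eta}\>$, so condition (ii) of isolation gives $\topp\Lambda\le \piz(x)$. When $x\in D$, both $y_x$ and $x$ lie in $\block S$, so again $\topp\Lambda\le\piz(x)$. When $x\in M$ we have $\pib(y_x)=S\ne\pib(x)$, so condition (iii) gives $\topp\Lambda\le \pim(x)$. In every case, writing ${\rho}=\piz(x)$ for $x\in F\cup D$ and ${\rho}=\pim(x)$ for $x\in M$, goodness of $r_{\nu}$ (condition (H) via Claim~\ref{Claim-2.6}) yields $\rho\in J(\delta_x)\setm {\gamma}(\delta_x)$, while $\piz(y_x)={\beta}_x\in[\underline{\gamma}(\delta_x),{\gamma}(\delta_x))\subs J(\delta_x)$. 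Thus the configuration is $\bott J(\delta_x)\le {\beta}_x<{\gamma}(\delta_x)\le {\rho}<{\delta}_x=\topp J(\delta_x)$.

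Since $\Lambda\in\mathbb I$ contains ${\beta}_x$, necessarily $\Lambda=\contint{{\beta}_x}n$ for some $n<{\omega}$. I split on $n$. If $n\le j(\delta_x)$, the monotonicity of the tree gives $\contint{{\beta}_x}n\supseteq \contint{{\beta}_x}{j(\delta_x)}=J(\delta_x)$, hence $\topp\Lambda\ge {\delta}_x>{\rho}$, contradicting isolation. So $n>j(\delta_x)$, in which case I must show $\bott\Lambda<{\beta}_x$ fails.

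The crux is to prove ${\beta}_x\in\incof{J(\delta_x)}$. Since ${\beta}_x\in\eorb{{\delta}_x}=\oorb{{\delta}_x}\cup(\incof{J(\delta_x)}\cap{\delta}_x)$, I may assume ${\beta}_x\in\oorb{{\delta}_x}=\bigcup\{\incof{\contint{{\delta}_x}m}\cap{\delta}_x:m<\nn{{\delta}_x}\}$. For each $m<\nn{{\delta}_x}$, $\incof{\contint{{\delta}_x}m}$ is precisely the set of left endpoints of the partition $\intpart{\contint{{\delta}_x}m}$ at the next level, and $\contint{{\delta}_x}{m+1}$ is one of its pieces; therefore $\incof{\contint{{\delta}_x}m}\cap\contint{{\delta}_x}{m+1}=\{\bott{\contint{{\delta}_x}{m+1}}\}$. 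Because $J(\delta_x)\subseteq\contint{{\delta}_x}{m+1}$, this forces $\incof{\contint{{\delta}_x}m}\cap J(\delta_x)\subseteq\{\bott J(\delta_x)\}$. Hence ${\beta}_x=\bott J(\delta_x)=\veps{J(\delta_x)}{0}\in\incof{J(\delta_x)}$, as required. Write ${\beta}_x=\veps{J(\delta_x)}{\mu}$.

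Now at level $j(\delta_x)+1$, the piece of $\intpart{J(\delta_x)}$ containing ${\beta}_x$ is $[\veps{J(\delta_x)}{\mu},\veps{J(\delta_x)}{\mu+1})$, whose bottom equals ${\beta}_x$. An easy induction on $n>j(\delta_x)$ shows that $\contint{{\beta}_x}n$ has $\bott=\beta_x$ in every subsequent refinement, since $\beta_x$ remains the left endpoint $\veps{}{0}$ of each new partition. Thus $\bott\Lambda={\beta}_x$, again contradicting isolation. The main obstacle is the structural identification ${\beta}_x\in\incof{J(\delta_x)}$: once that is in hand, the two-case dichotomy by the level $n$ of $\Lambda$ closes the argument cleanly.
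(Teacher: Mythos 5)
Your proposal is correct and is essentially the paper's own argument: isolation together with goodness and Claim~\ref{Claim-2.5}(a) forces the level of $\Lambda$ to exceed $j(\delta_x)$, and then $\pi(y_x)=\beta_x\in E(J(\delta_x))$ forces $\Lambda^-=\pi(y_x)$, contradicting isolation (the paper simply asserts $\pi(y_x)\in E(J(\delta_x))$, which you verify in detail). One micro-correction: for $m=n(\delta_x)-1$ the containment $J(\delta_x)\subseteq I(\delta_x,m+1)$ is false (since $\mathrm{cf}(\delta_x)=\kappa^+$ gives $j(\delta_x)=n(\delta_x)$, so $I(\delta_x,n(\delta_x))$ is the sibling of $J(\delta_x)$ with bottom $\delta_x$, disjoint from it), but the inclusion you need, $E(I(\delta_x,n(\delta_x)-1))\cap J(\delta_x)=\{J(\delta_x)^-\}$, still holds because $J(\delta_x)$ is itself one of the pieces of that partition, so your proof goes through.
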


\begin{proof} By Claim \ref{Claim-2.5}(a), we have $\mbox{ cf}(\delta_x) =
\kappa^+$ and $\pi(x) <\delta_x$. By Proposition-2.1, we have
$j(\delta_x)=n(\delta_x)$ and $\delta_x=J(\delta_x)^+$. Then,
assume on the contrary that there is an interval $\Lambda\in
\mathbb I$ that isolates $y_x$ from $x$. Let $m<\omega$ such that
$\Lambda = I(\pi(y_x),m)$. As $\Lambda$ isolates $y_x$ from $x$
and $x,y_x\in J(\delta_x)$, we deduce that $m>j(\delta_x)$. But
from $m>j(\delta_x)$ and $\pi(y_x)\in E(J(\delta_x))$ we infer
that $\pi(y_x)=\Lambda^-$. Hence, $\Lambda$ does not isolate $y_x$
from $x$.
\end{proof}

However, if $x\in L$ it may happen that there is a $\Lambda\in
\mathbb I$ that isolates $y_x$ from $x$.

\vspace{2mm} Now, we are ready to start to define the common
extension $r=(A_r,\prec_r,i_r)$ of $p$ and $q$. First, we define
the universe $A_r$. Put 
{$L^+ = \{x\in L:
\pi(x)\neq\pi_{-}(x)\}$.} 
Then, if $x\in L^+$ and $x'$ is the twin
element of $x$, we consider new elements $u_x,u_{x'}\in X\setminus
(A_p\cup A_q\cup Y)$ such that $\pi_B(u_x)=\pi_B(x)$,
$\pi(u_x)=\pi_{-}(x)$, $\pi_B(u_{x'})=\pi_B(x')$  and
$\pi(u_{x'})=\pi_{-}(x')$. We suppose that $u_x,u_z,u_{x'},u_{z'}$
are different if $x,z$ are different elements of $L^+$. We put $U
= \{u_x: x\in L^+\}$ and $U' = \{u_{x'}: x\in L^+\}$. Then, we
define

$$A_r = A_p\cup A_q\cup Y\cup U\cup U'.$$

Clearly, $A_r$ satisfies (P1). Now, our purpose is to define
$\preceq_r$. First, for $x,y\in \br  \ap\cup \aq;2;$ let
\begin{equation}
x\prepq y \text{ iff } \exists z\in \ap\cup \aq\ [x\prep z\lor
x\preq z]\ \land\ [z\prep y\lor z\preq y].
\end{equation}
The following claim is straightforward.
\begin{cclaim}
\label{Claim-2.18}  $\prepq$ is the
partial order on $\ap\cup \aq$ generated by $\prep\cup \preq$.
\end{cclaim}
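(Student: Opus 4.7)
The plan is to verify that $\prepq$ is a partial order on $\ap\cup\aq$ and that it is the smallest one containing $\prep\cup\preceq_q$. The crucial ingredient to exploit is that $\prep$ and $\preceq_q$ agree on the common kernel $A=\ap\cap\aq$: this follows from (C)(a), (C)(f) and the fact that $h_{\nu,\mu}$ is the identity on $A$, so for $x,y\in A$ one has $x\prep y \iff h_{\nu,\mu}(x)\preceq_q h_{\nu,\mu}(y)\iff x\preceq_q y$.

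First I would dispatch the formal parts. The inclusion $\prep\cup\preceq_q\subseteq\prepq$ and reflexivity of $\prepq$ are both immediate by choosing $z=y$ or $z=x$ in the definition of $\prepq$. For antisymmetry, note that by (P2) every $\prep$- or $\preceq_q$-step either keeps $\piz$ constant (at an equality) or strictly increases it, so any $\prepq$-chain has non-decreasing $\piz$-values; in particular $x\prepq y\prepq x$ forces $x=y$.

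The main obstacle is transitivity. Given $x\prepq y\prepq z$, the definition produces a chain in $\prep\cup\preceq_q$ of at most four edges from $x$ to $z$, and my strategy is to reduce any such chain to at most two edges. First, two consecutive edges of the same type collapse by transitivity of that single order, so after this cleanup the chain alternates in type. Second, in any alternating chain of length $\ge 3$ every interior vertex lies in both $\ap$ and $\aq$, hence in $A$; but on $A$ the two orders coincide, so an edge between two interior vertices can be retyped to match an adjacent edge and then merged with it by transitivity, shortening the chain by one. Iterating this retype-and-merge step reduces the chain to length at most two, which is exactly what the definition of $\prepq$ requires, giving $x\prepq z$. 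Finally, minimality is automatic: any partial order extending $\prep\cup\preceq_q$ must, by transitivity, contain every two-step composition, hence contains $\prepq$.
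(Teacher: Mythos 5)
Your proof is correct. The paper offers no argument at all for this claim (it is dismissed as ``straightforward''), and your verification is exactly the routine content being omitted: reflexivity and antisymmetry via (P2), minimality because any transitive extension of $\prep\cup\preq$ contains all two-step compositions, and transitivity by collapsing same-type edges and noting that every interior vertex of an alternating chain lies in $\ap\cap\aq=A$, where $\prep$ and $\preq$ coincide since $h_{\nu,\mu}$ is an isomorphism fixing $A$ pointwise, so the chain can be retyped and merged down to length at most two. One small nit: for the agreement of the two orders on $A$ you only need (C)(a) together with $h_{\nu,\mu}$ being an order isomorphism; (C)(f) concerns the infimum functions and plays no role here.
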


\vspace{2mm} Next, we define the relation $\preceq^*$ on $A_p\cup
A_q\cup Y$ as follows. Let us recall that $A=\ap\cap \aq$.
Informally, $\preceq^*$ will be  the ordering on $A_p\cup A_q\cup Y$
generated by
\begin{multline}\notag
\prepq\cup \{\<y,g(y)\>,\<y,\gbar(y)\>:y\in Y\}\cup
\\\{\<y,y'\>:y,y'\in Y, g(y)\prep g(y')\}\cup\\
\{\<a,y\>:a\in A, y\in Y, a\prep g(y)\}.
\end{multline}
The formal definition is a bit different, but its formulation
simplifies the separation of different cases later. So we introduce
five relations on $A_p\cup A_q\cup Y$ as follows:
\begin{equation}\notag
  \begin{array}{lcl}
\pren{1_{p}}&=&\{\<y,a\>:y\in Y,a\in \ap, g(y)\prep a\},\\
\pren{1_{q}}&=&\{\<y,a\>:y\in Y,a\in \aq, \gbar(y)\preq a\},\\
\pre{2}&=&\{\<y,y'\>:y,y'\in Y, g(y)\prep g(y')\},\\
\pren{3_{p}}&=&\{\<x,y\>:x\in \ap,y\in Y, \exists a\in A\
x\prep a \prep g(y)\},\\
\pren{3_{q}}&=&\{\<x,y\>:x\in \aq, y\in Y, \exists a\in A\ x\preq a
\preq \gbar(y)\}.
  \end{array}
\end{equation}
Then, we put
\begin{equation}
\preceq^*=\preceq_{p,q}\cup \pren{1_{p}}\cup \pren{1_{q}}\cup
\pre{2}\cup \pren{3_{p}}\cup \pren{3_{q}}.
\end{equation}

\vspace{2mm} The partial order $\prer$ will be an extension of
$\preceq^*$. So, we need to prove the following lemma:

\begin{lemma}
\label{Lemma-2.19} $\preceq^*$ is a partial
order on $A_p\cup A_q\cup Y$.
\end{lemma}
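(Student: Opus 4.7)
I would prove Lemma~\ref{Lemma-2.19} by checking reflexivity, antisymmetry, and transitivity for $\preceq^*$, transitivity being the main work. Every $w\in A_p\cup A_q$ has $w\preceq_{p,q}w$ (take $z=w$), and every $y\in Y$ has $y\preceq^{R2}y$ since $g(y)\preceq_p g(y)$.

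\textbf{Antisymmetry.} I would use the auxiliary level $\ell(w):=\pi(w)$ for $w\in A_p\cup A_q$ and $\ell(y):=\pi(g(y))$ for $y\in Y$. Every $\preceq^*$-step is $\ell$-monotone by (P2) and the definitions of $\preceq^{R1_p},\preceq^{R1_q},\preceq^{R2},\preceq^{R3_p},\preceq^{R3_q}$. Moreover, $\ell$-equality across a step is severely restricted: it forces $x=g(y)$ (resp.\ $x=\overline{g}(y)$) for $\preceq^{R1_p}$ (resp.\ $\preceq^{R1_q}$), and it is impossible for $\preceq^{R3_p}$ and $\preceq^{R3_q}$ because the witness $a\in A$ would have to coincide with $g(y)\in A'$ while $A\cap A'=\emptyset$. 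A length-two cycle $x\preceq^* y\preceq^* x$ with $x\neq y$ would force $\ell(x)=\ell(y)$, ruling out the $R3$-steps needed to close any $A_p\cup A_q\leftrightarrow Y$ transition, and the remaining pure cases reduce to antisymmetry of $\preceq_{p,q}$ or of $\preceq_p$ (on the image $g(Y)$).

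\textbf{Transitivity.} The crucial auxiliary fact is an \emph{Isomorphism Lemma}: for every $a\in A$ and $y\in Y$,
\[
a\preceq_p g(y)\iff a\preceq_q\overline{g}(y),
\]
and moreover $\preceq_p$ and $\preceq_q$ agree on $A\times A$. Both are immediate from (C)(a)--(b), which give $h_{\nu,\mu}|_A=\operatorname{id}$ and $h_{\nu,\mu}(g(y))=\overline{g}(y)$. I would then run a case analysis on the type of the middle element $v$. When $v\in A_p\cup A_q$ the pair of steps is drawn from $\{\preceq_{p,q},\preceq^{R1_p},\preceq^{R1_q}\}\times\{\preceq_{p,q},\preceq^{R3_p},\preceq^{R3_q}\}$; when $v\in Y$, from $\{\preceq^{R2},\preceq^{R3_p},\preceq^{R3_q}\}\times\{\preceq^{R1_p},\preceq^{R1_q},\preceq^{R2}\}$. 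In each case the Isomorphism Lemma converts any element of $A$ appearing as a witness between the $p$- and $q$-sides. For instance, if $u\preceq^{R3_p}v\preceq^{R1_q}w$ with $u\in A_p$, $v\in Y$, $w\in A_q$, pick $a\in A$ with $u\preceq_p a\preceq_p g(v)$; the lemma gives $a\preceq_q\overline{g}(v)\preceq_q w$, so $u\preceq_p a\preceq_q w$ and hence $u\preceq_{p,q}w$.

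The main obstacle is the bookkeeping of the transitivity case analysis and ensuring that each composition lands inside one of the six defining relations rather than outside $\preceq^*$. Claim~\ref{Claim-2.18} (which presents $\preceq_{p,q}$ as chains of length at most two) together with the Isomorphism Lemma makes every cross-side composition collapse to an allowed form.
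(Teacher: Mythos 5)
Your plan for transitivity is essentially the paper's own argument: a case analysis on the type of the middle element, in which every witness from $A$ is transported between the $p$-side and the $q$-side using that $h_{\nu,\mu}$ is the identity on $A$ (your ``Isomorphism Lemma''), exactly as in Cases I--V of the paper's proof; your sample composition $\prec^{R3_p}$ followed by $\prec^{R1_q}$ is handled there in precisely the same way. So the heart of the lemma is fine.

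The antisymmetry step, however, rests on a claim that is false as stated. With $\ell(y_x):=\pi(g(y_x))=\pi(x)$, the relation $\prec^{R1_q}$ need not be $\ell$-monotone: taking $a=\overline{g}(y_x)$ (allowed, since $\overline{g}(y_x)\preceq_q \overline{g}(y_x)$) gives $y_x\prec^{R1_q}\overline{g}(y_x)$, and nothing guarantees $\pi(x)\le \pi(\overline{g}(y_x))$ --- the paper explicitly does not assume $\nu<\mu$, and for $i\in D$ we even have $\pi(x_{\nu,i})\ne\pi(x_{\mu,i})$, so for one of the two roles of $(p,q)$ this step strictly decreases $\ell$. (Also, monotonicity for $\prec^{R3_q}$ and the equality analysis for the $q$-side steps already need the Isomorphism Lemma, not just ``(P2) and the definitions''.) Hence the inference that a two-cycle forces $\ell(x)=\ell(y)$ breaks down. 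The defect is local and repairable: either symmetrize the level function, e.g.\ $\ell(y)=\min\{\pi(g(y)),\pi(\overline{g}(y))\}$, which is monotone for all six relations once the Isomorphism Lemma is invoked, or argue directly: a cycle between $w\in A_p\cup A_q$ and $y\in Y$ produces, after transporting everything to the $p$-side, some $a\in A$ with $g(y)\preceq_p\dots\preceq_p a\preceq_p g(y)$, so $g(y)=a\in A$, contradicting $g(y)\in A'$; $Y$--$Y$ cycles reduce to antisymmetry of $\preceq_p$ via injectivity of $g$ on $Y$, and cycles inside $A_p\cup A_q$ to Claim \ref{Claim-2.18}. (The paper itself obtains the monotonicity it needs from the actual values $\pi(y_x)=\beta_x$ together with Claim \ref{Claim-2.16}, after Lemma \ref{Lemma-2.20}.)
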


\begin{proof}
Let $s\prer t\prer u$. We should show that $s\prer u$.

We can assume that $t\notin \aq\setm \ap$.

\newcases

\renewcommand{\theccase}{\Roman{ccase}}

\renewcommand{\thesscase}{\thescase.\roman{sscase}}

\begin{ccase}
$s\in \ap\cup \aq$, $t\in \ap$ and $s\prepq t$.
\end{ccase}

Without loss of generality, we may assume that $u\in Y$ and
$t\pren{3p}  u$, i.e. there is $a\in A$ such that $t\prep a\prep
g(u)$.

\begin{scase}
$s  \in \ap$.
\end{scase}

Then  $s\prep a\prep g(u)$ and so $s\pren{3p}u$.

\begin{scase}
$s\in \aq\setm \ap$.
\end{scase}

Then there is $b\in A$ such that $s\preq b \prep t\prep  a \prep
g(u)$. Then $s \preq a\preq \gbar(u)$ so $s\pren{3q}u$.

 \begin{ccase}
$s\in Y$, $t\in \ap$ and
 $s\pren{1p} t$.
\end{ccase}

 \begin{scase} $u\in \ap\cup \aq$ and
$s\pren{1p} t\prepq u$.
\end{scase}

 \begin{sscase}
 $u\in \ap$.
\end{sscase}

Then $g(s)\prep t\prep u$ hence $s\pren{1p}u$.

 \begin{sscase}
 $u\in \aq\setm \ap$.
 \end{sscase}

Then there is $a\in A$ such that $g(s)\prep t\prep a\preq u$.
Hence $\gbar (s)\preq a \preq u$ and so $\gbar(s)\preq u$. Thus
$s\pren{1q}u$.

\begin{scase}$u\in Y$ and
$s\pren{1p} t\pren{3p} u$.
\end{scase}

Then there is $a\in A$ such that $g(s)\prep t \prep a \prep g(u)$
and so  $s\pre{2} u$.

 \begin{ccase}
$s,t\in Y$ and $s\pre{2}t$.
\end{ccase}

   \begin{scase} $u\in \ap$ and $s\pre {2}t\pren {1p} u$.
   \end{scase}

Then $g(s)\prep g(t)\prep u$ so $s\pren {1p}u$.

   \begin{scase} $u\in \aq$ and
    $s\pre{2}t\pren {1q} u$.
   \end{scase}

Then $g(s)\prep g(t)$ and $\gbar (t)\preq u$. Thus $\gbar(s)\preq
\gbar(t)\preq u$ so $s\pren {1q}u$.

\begin{scase}
$u\in Y$ and $s\pre{2}t \pre{2} u$.
\end{scase}

Then $g(s)\prep g(t)\prep g(u)$ so $s\pre{2} u$.

 \begin{ccase}
$s\in \ap$, $t\in Y$ and
 $s\pren{3p}t$.
 \end{ccase}

 \begin{scase}
$u\in \ap$ and $s\pren {3p}t\pren {1p}u$.
\end{scase}

Then there is $a\in A$ such that $s\prep a \prep g(t)\prep u$ so
$s\prep u$.

\begin{scase}
$u\in \aq$ and $s\pren {3p}t\pren {1q}u$.
 \end{scase}

Then there is $a\in A$ such that $s\prep a \prep g(t)$ and
$\gbar(t)\preq u$. So $a\preq \gbar(t)$ and hence $s\prep a\preq
u$. Thus  $s\prepq u$.

\begin{scase} $u\in Y$ and
$s\pren {3p}t\pre{2} u$.
\end{scase}

Then there is $a\in A$ such that $s\prep a \prep g(t)\prep g(u)$
and so $s\pren {3p}u$.

 \begin{ccase}
$s\in \aq$, $t\in Y$ and $s\pren {3q}t$.
 \end{ccase}

Only case (3) is different from (IV): \addtocounter{scase}{2}

\begin{scase} $u\in Y$ and $s\pren {3q}t\pre{2} u$.
\end{scase}

Then there is $a\in A$ such that $s\preq a \preq \gbar(t)$ and
$g(t) \prep g(u)$. Then $\gbar(t)\preq \gbar(u)$, so $s\preq
a\preq \gbar(u)$, thus $s\pren {3q}u$.
\end{proof}

Informally, $\prer$ will be  the ordering on $A_p\cup A_q\cup Y\cup
U\cup U'$
generated by
\begin{equation}
\notag
\preceq^*\cup 
 \{\<y_s,u_s\>: s\in A_p\cup A_q\}\cup
\{\<u_s,s\>: s\in A_p\cup A_q\}.
 \end{equation}

\vspace{2mm} Now, in order to define $\prer$ we need to make the
following definitions:

\begin{equation}\notag
  \begin{array}{lcl}
\pren{4_{p}}&=&\{\<s,u_x\>: s\in A_p\cup A_q\cup Y, x\in L^+ \mbox{ and } s\preceq^* y_x\},\\
\pren{4_{q}}&=&\{\<s,u_{x'}\>:  s\in A_p\cup A_q\cup Y, x\in L^+ \mbox{ and } s\preceq^* y_x \},\\
\pren{5_{p}}&=&\{\<u_x,t\>: x\in L^+, t\in A_p \mbox{ and } x\prep t\},\\
\pren{5_{q}}&=&\{\<u_{x'},t\>: x\in L^+, t\in A_q \mbox{ and }
x'\preceq_q t\},\\
=^U&=&\{\<u_x,u_x\>: x\in L^+\},\\
=^{U'}&=&\{\<u_{x'},u_{x'}\>: x\in L^+\}.  
\end{array}
\end{equation}

Then, we define:

\begin{equation}
\prer = \preceq^*\cup \pren{4_{p}}\cup \pren{4_{q}}\cup
\pren{5_{p}}\cup \pren{5_{q}}\cup =^U\cup =^{U'}.
\end{equation}

Write $x\prenr y$ iff $x\prer y$ and $x\ne y$.

\begin{lemma}
\label{Lemma-2.20} $\prer$ is a partial
order on $A_r$.
\end{lemma}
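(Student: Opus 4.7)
The plan is to verify the three partial-order axioms—reflexivity, antisymmetry, transitivity—for the union $\prer$ on $\ar$. Reflexivity is immediate: $\preceq^*$ is reflexive on $\ap\cup\aq\cup Y$ (it contains $\prepq$, which is reflexive), and $=^U$, $=^{U'}$ take care of $U$ and $U'$.

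For antisymmetry I would verify that $\piz$ strictly increases along any $\prenr$-step, which rules out cycles. For steps inside $\preceq^*$ this follows from (P2) propagated through Lemma~\ref{Lemma-2.19}. For $s\pren{4_{p}} u_x$ one has
\begin{equation}\notag
\piz(s)\le \piz(y_x)=\beta_x<\underline{\gamma}(\delta_x)\le \delta_x=\pim(x)=\piz(u_x),
\end{equation}
and for $u_x\pren{5_{p}} t$ the definition of $L^+$ gives $\piz(u_x)=\pim(x)<\piz(x)\le \piz(t)$. The $q$-sided relations and the diagonal relations $=^U$, $=^{U'}$ are handled identically.

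Transitivity is the substantial part, which I would attack by case analysis on the type of the middle element $t$ in a chain $s\prer t\prer u$. When $t\in \ap\cup\aq\cup Y$ and both steps come from $\preceq^*$, Lemma~\ref{Lemma-2.19} gives $s\preceq^* u$; if the outgoing step has type $\pren{4_{p}}$, so $u=u_x$ with $t\preceq^* y_x$, then Lemma~\ref{Lemma-2.19} yields $s\preceq^* y_x$, i.e.\ $s\pren{4_{p}} u$; an incoming $\pren{5_{p}}$ step is dual. The core case is $t\in U\cup U'$: for $t=u_x$ the only possible configuration is $s\pren{4_{p}} u_x\pren{5_{p}} u$, and then from $s\preceq^* y_x$ together with $g(y_x)=x\prep u$ one obtains $y_x\pren{1_{p}} u$, so $s\preceq^* u$ by Lemma~\ref{Lemma-2.19}; the $u_{x'}$ case is symmetric via $\gbar(y_x)=x'$ and $\pren{1_{q}}$.

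The main obstacle I anticipate lies in the mixed compositions $u_x\pren{5_{p}} t\preceq^* u$ in which $t\in \ap$ but $u\in \aq\setminus \ap$, since the pair $(u_x,u)$ is not listed in any of the six generating relations. To resolve them I would unwind the $\prepq$-chain from $t$ to $u$: the alternation of $\prep$- and $\preq$-steps is forced to pass through some $z\in A$ with $t\prep z$ (or $t=z$) and $z\preq u$, and since $\prep$ and $\preq$ coincide on $A$ by the isomorphism in (C), the relevant arrow can be produced. Analogous bookkeeping, using Lemma~\ref{Lemma-2.19} to collapse pure $\preceq^*$-segments and the design of $\beta_x$ and the auxiliary elements $y_x$, $u_x$, $u_{x'}$, disposes of the remaining cases.
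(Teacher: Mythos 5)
Your skeleton—reflexivity is trivial, antisymmetry via strict $\piz$-monotonicity along $\prenr$, transitivity by cases on which of $s,t,u$ meets $U\cup U'$, with Lemma~\ref{Lemma-2.19} collapsing the $\preceq^*$-segments—is essentially the paper's, and your treatment of $t\in U$ and of an outgoing $\pren{4_{p}}$-step is correct. But the case you yourself single out as the main obstacle is resolved incorrectly, and this is a genuine gap. In the configuration $u_x\pren{5_{p}} t\preceq^* u$ with $u\in A_q\setminus A_p$ there is no ``relevant arrow'' that could be produced: by definition, the only pairs of $\prer$ whose first coordinate lies in $U$ are those of $\pren{5_{p}}$ (second coordinate in $A_p$) and $=^U$, so $u_x\prer u$ is simply not available for $u\in A_q\setminus A_p$ (nor for $u\in Y\cup U'$). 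Hence, if that configuration were realizable, $\prer$ would fail to be transitive; the proof must show it cannot occur. That is exactly what the paper does: $x\in L^+$ and $x\prep t$ force $t\in L$ by Claim~\ref{Claim-2.9}, and an element of $L$ has no element of $A$ $\prep$-above it. Your own unwinding of $t\prepq u$ produces $z\in A$ with $t\prep z$ and $z\preceq_q u$ — at that point you should conclude a contradiction (the case is vacuous), not invoke the isomorphism to ``produce'' a relation that the definition of $\prer$ does not contain. In the realizable subcase with $s=u_x$, the same observation shows $t\preceq^* u$ must reduce to $t\prep u$ with $u\in A_p$, whence $x\prep u$ and $u_x\pren{5_{p}} u$ (the paper's Case 1); and the same impossibility argument is also what excludes two of $s,t,u$ lying in $U\cup U'$ (e.g.\ $u_x\prer t\prer u_z$), a configuration your sketch does not address at all.

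Two smaller points. In your antisymmetry display the middle inequality is backwards: $\beta_x\in[\underline{\gamma}(\delta_x),\gamma(\delta_x))$, so $\beta_x\ge\underline{\gamma}(\delta_x)$; the correct chain is $\piz(y_x)=\beta_x<\gamma(\delta_x)\le\delta_x=\pim(x)=\piz(u_x)$, using that $\gamma(\delta_x)=\delta_x$ when $\cf(\delta_x)=\kappa$ and $\gamma(\delta_x)<\delta_x$ when $\cf(\delta_x)=\kappa^+$. Also, strict $\piz$-monotonicity along $\pren{3_{p}}$-steps is not a formal consequence of Lemma~\ref{Lemma-2.19} (which is only transitivity of $\preceq^*$); it requires Claim~\ref{Claim-2.16}, which is precisely how the paper verifies (P2) immediately after this lemma. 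These two items are repairable, but the $u_x\pren{5_{p}} t\preceq^* u$ case must be redone along the lines above.
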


\begin{proof}
Assume that $s\prenr t\prenr v$. We have to show that $s\prenr v$. Note
that if $s,t,v\in A_p\cup A_q\cup Y$, then $s\prec^* t \prec^* v$, and so
we are done by Lemma \ref{Lemma-2.19}. Also, it is impossible that two
elements of $\{s,t,v\}$ are in $U\cup U'$. To check this point,
assume that $s,v\in U$. Put $s=u_x$, $v=u_z$ for $x,z\in  L^+$. As
$u_x\prec_r t$, we have $u_x \pren{5p} t$ and so $x\prep t$. As $t\prec_r
u_z$, we have $t \pren{4p} u_z$ and so $t\prec^* y_z$. Hence,
$x\prep t \prec^* y_z \prec^* z$. Since $x\;\mip\; t$ and $x\in L$, we
infer that $t\in L$. Also, from $t\prec^* y_z$ we deduce that
$t\pren{3p} y_z$ and so there is an $a\in A$ such that $t\;\mip\;
a\;\mip\; z$. But since $t\in L$, it is impossible that there is
an $a\in A$ with $t\;\mip\; a$. Proceeding in an analogous way, we
arrive to a contradiction if we assume that $s\in U$ and $v\in
U'$. So, at most one element of $\{s,t,v\}$ is in $U\cup U'$.
Then, we consider the following cases:

\vspace{2mm}\noindent {\bf Case 1}. $s\in U$.

We have that $t,v\in A_p\cup A_q\cup Y$. Put $s=u_x$ for some
$x\in L^+$. Since $u_x \prec_r t$, we have $u_x \pren{5p} t$ and so
$x\;\mip\; t$. As $t \prec_r v$, we have $t \prec^* v$. So, $x\prep
t\prec^* v$. But as $x\in L$ and $x\;\mip\; t$, we infer that $t\in
L$ . Hence, $t\prec_p v$. Thus $x\prec_p v$, therefore $u_x \pren{5p} v$,
and so $u_x \prec_r v$.

\vspace{2mm}\noindent {\bf Case 2}. $t\in U$.

We have that $s,v\in A_p\cup A_q\cup Y$. Put $t=u_x$ for $x\in
L^+$. From $s\prec_r u_x$, we infer that $s\pren{4p} u_x$ and so
$s\preceq^* y_x$. From $u_x \prec_r v$, we deduce that $u_x\pren{5p} v$ and
hence $x\;\mip\; v$. So we have $s\preceq^* y_x \prec^* x\;\mip\; v$, and
therefore $s\prec_r v$.

\vspace{2mm}\noindent {\bf Case 3}. $v\in U$.

We have that $s,t\in A_p\cup A_q\cup Y$. Put $v=u_x$ for $x\in
L^+$. Since $t\prec_r u_x$, we have that $t\pren{4p}u_x$ and so $t\preceq^*
y_x$. And from $s\prec_r t$ we deduce that $s\prec^*t$. So $s\prec^* y_x$,
hence $s\pren{4p} u_x$, and thus $s\prec_r u_x$.
\end{proof}

Now note that $s \pren{3_p} t$ implies $\pin(s) <\pin(t)$ by Claim
\ref{Claim-2.16}, and so it is clear that $s \prec_r t$ 
implies $\pi(s) <\pi(t)$.
Thus, condition (P2) holds. Also, it is easy to verify that
$\prer$ satisfies (P3).

If $x\in \ap$ denote its ``twin'' in $\aq $ by $x'$, and vice
versa, if $x\in \aq$ denote its ``twin'' in $\ap $ by $x'$.

Extend the definition  of $g$ as follows: $g:\ar\to \ap$ is a
function,
\begin{displaymath}
g(x)=\left\{ 
\begin{array}{ll}
x&\text{if $x\in \ap$,}\\  
x'&\text{if $x\in \aq$,}\\  
s&\text{if $x=y_s$ for some $s\in \ap$,}\\  
t&\text{if $x=u_t$ for some $t\in \ap$,}\\  
t'&\text{if $x=u_t$ for some $t\in \aq$.}  
\end{array}
\right .  
\end{displaymath}

For $\{s,t\}\in \br \ar;2;$ we will be able to define the infimum of
$s,t$ in $(\ar,\prer)$ from the infimum of $g(s),g(t)$ in $p$.
Now, we need to prove some facts concerning the behavior of the
function $g$ on $\ar$.

\begin{cclaim}
\label{Claim-1} 
Let $a\in A$ and $x\in \ar$. Then 
\begin{enumerate}[(1)]
\item $x\prer a$ iff $g(x)\prep a$,
\item $a\prer x$ iff $a\prep g(x)$.  
\end{enumerate}
\end{cclaim}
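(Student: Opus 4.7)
}
The plan is a case split on where $x$ lives in $A_r=A_p\cup A_q\cup Y\cup U\cup U'$, combined with an unpacking of the constituent relations of $\prer$ (namely $\preceq^*$, $\pren{4_p}$, $\pren{4_q}$, $\pren{5_p}$, $\pren{5_q}$, $=^U$, $=^{U'}$). In each case one simply determines which of these relations can possibly witness $x\prer a$ (respectively $a\prer x$) for $a\in A$, and checks that this is equivalent to $g(x)\prep a$ (respectively $a\prep g(x)$).

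The one genuinely substantive ingredient I will isolate first is the following: since the isomorphism $h_{\nu,\mu}$ fixes $A$ pointwise by (C)(a), the relations $\prep$ and $\preq$ agree on $A\times A$, and the infima inside $A$ agree by (C)(f). From this I deduce a key auxiliary statement: for $a\in A$ and $z\in A_p\cup A_q$, $z\prepq a$ is equivalent to $z\prep a$ whenever $z\in A_p$, and to $\preq a$ whenever $z\in A_q$; the symmetric statement holds with $a$ on the left. The verification is a short case analysis on the single intermediate point in the definition of $\prepq$: whenever a step $\prep$ follows (or is followed by) a step $\preq$, the common endpoint must lie in $A_p\cap A_q=A$, at which point agreement on $A$ allows me to straighten the chain.

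With that lemma in hand the cases are routine. If $x\in A_p$, then $g(x)=x$ and the only relations in $\prer$ that can involve $x$ and $a$ are those in $\prepq$, so the auxiliary statement finishes it. If $x\in A_q\setminus A$, then $g(x)=x'$; again only $\prepq$ can be involved, and I push through the isomorphism $h_{\nu,\mu}$, which fixes $a\in A$ and sends $x$ to $x'$, to convert $x\prepq a$ into $x\preq a$ and then into $x'\prep a$. If $x=y_s\in Y$, then $g(x)=s$; the relations $\pren{1_p}$ and $\pren{3_p}$ handle respectively $y_s\prer a$ and $a\prer y_s$ via $a\in A_p$, and translate directly into $s\prep a$ or $a\prep s$ (taking $a$ itself as the auxiliary element of $A$ appearing in the definition of $\pren{3_p}$); the relations $\pren{1_q}$, $\pren{3_q}$ give the same conclusion after pushing through $h_{\nu,\mu}$ as in the previous case. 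If $x=u_t\in U$, then $g(x)=t$ and $\pren{5_p}$ immediately gives $u_t\prer a\iff t\prep a$, while $\pren{4_p}$ reduces $a\prer u_t$ to $a\preceq^* y_t$, hence to $a\prep t$ by the $Y$-case. The $U'$ case is symmetric via $\pren{5_q}$ and $\pren{4_q}$.

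I expect the main bookkeeping obstacle to be making sure no other constituent of $\prer$ accidentally connects $x$ and $a$: in particular, the relations $=^U$, $=^{U'}$ are trivially irrelevant since $a\notin U\cup U'$, and the relations $\pren{4_\cdot}$, $\pren{5_\cdot}$ are only active when one endpoint lies in $U\cup U'$. Once this is checked, the content of the claim reduces to the auxiliary straightening lemma plus the defining equations of $\preceq^*$ and of $g$.
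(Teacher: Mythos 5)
Your proposal is correct and follows essentially the same route as the paper: the paper's own (very terse) proof simply observes that for $a\in A$ the only constituents of $\prer$ that can link $x$ and $a$ are $\prepq$, $\pren{1_p}$ (resp.\ $\prepq$, $\pren{3_p}$, $\pren{4_p}$, $\pren{4_q}$ on the other side) and checks each, which is exactly your case analysis. Your explicit straightening lemma for $\prepq$ through the kernel $A$, and the remark that $a$ itself serves as the witness in $\pren{3_p}$, are just the details the paper leaves implicit.
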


\begin{proof}
(1) $x\prer a$ iff $x\prepq a$ or $x\pren {1p} a$
and (1)
holds in both cases. \\  
(2) $a\prer x$ iff $a\prepq x$ or $a\pren {3p} x$ or 
$a \pren {4p} x$ or $a\pren {4q} x$,
and (2) holds in every case.  
\end{proof}

\begin{cclaim}
\label{Claim-2}  
If $x\prer y$ then $g(x)\prep g(y)$ for  $x,y\in \ar$.
  \end{cclaim}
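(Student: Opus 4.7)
The plan is to perform a case analysis on which atomic piece of the definition of $\prer$ witnesses $x \prer y$. Since $\prer$ is given as a raw set-theoretic union of the ten relations $\prepq, \pren{1p}, \pren{1q}, \pre{2}, \pren{3p}, \pren{3q}, \pren{4p}, \pren{4q}, \pren{5p}, \pren{5q}, =^U, =^{U'}$, and Lemma~\ref{Lemma-2.20} has already established that this union is transitive, it suffices to verify the conclusion for each atomic piece in isolation. The central tool is the isomorphism $h = h_{\nu,\mu}$ from property (C): if $s \preq t$ in $\aq$ then $h^{-1}(s) \prep h^{-1}(t)$ in $\ap$, and $h$ is the identity on $A = \ap \cap \aq$. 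Under the extended $g$, an element of $\ap$ is its own $g$-image, an element of $\aq$ is sent by $h^{-1}$ to its twin in $\ap$, $y_s$ and $u_s$ go to $s$ (resp. $u_{s'}$ goes to $s$), so $g$ is essentially a "pull back to $\ap$" operator.

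For the $\preceq^*$ pieces, the bookkeeping is straightforward. Cases $\pren{1p}, \pre{2}, \pren{3p}$ give $g(x)\prep g(y)$ directly from the definitions since all relevant elements sit on the $\ap$ side. Cases $\pren{1q}, \pren{3q}$ start from $\bar{g}(y)\preq a$ or $x \preq a \preq \bar{g}(y)$; applying $h^{-1}$ transports each $\preq$-link to a $\prep$-link, and $a \in A$ forces $a = a'$, so the chain reassembles into $g(x)\prep g(y)$ inside $\ap$. The $\prepq$ case splits by the side of the intermediate witness $z$: if $z$ lives on only one side the conclusion is immediate, and if the two steps straddle $\ap$ and $\aq$ then $z$ must lie in $A = \ap\cap\aq$, so $z = z'$ and pushing the two halves through $h^{-1}$ glues them into a single $\prep$-chain in $\ap$.

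For the remaining pieces of $\prer$, the $\pren{4p}, \pren{4q}$ cases reduce immediately to the $\preceq^*$ cases, since they are defined by $s \preceq^* y_z$ and $g(u_z) = z = g(y_z)$ (respectively $g(u_{z'}) = z = g(y_z)$). The $\pren{5p}$ case gives $g(u_x) = x \prep t = g(t)$ outright, and $\pren{5q}$ gives $x' \preq t$ which transports under $h^{-1}$ to $x \prep t' = g(t)$, yielding $g(u_{x'}) = x \prep g(t)$. The relations $=^U$ and $=^{U'}$ are trivial. The only point requiring real attention in the whole argument is the $\prepq$ case, specifically the observation that a witness chain crossing between $\ap$ and $\aq$ must pivot through an element of $A$; everything else is mechanical translation via $h^{-1}$.
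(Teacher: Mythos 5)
Your proof is correct and follows essentially the same route as the paper, which simply decomposes $x\prer y$ into the atomic relations $\prepq,\pren{1p},\dots,\pren{5q},=^U,=^{U'}$ and notes that the implication holds in each case; your write-up just makes the twin-transport via $h_{\nu,\mu}^{-1}$ and the pivot through $A$ in the $\prepq$ case explicit. (The appeal to Lemma \ref{Lemma-2.20} is unnecessary, since $\prer$ is by definition the union of the atomic relations, but this does not affect correctness.)
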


\begin{proof}
$x\prenr y$   
iff $x\prenpq y$ or $x\pren {1p} y$ or $x\pren {1q} y$
or $x\pren 2 y$  or $x\pren {3p} y$ or
$x\pren {3q} y$ or $x \pren {4p} y$ or $x\pren {4q} y$
or $x \pren {5p} y$ or $x \pren {5q} y$, and the implication
holds in every case.
\end{proof}
\begin{cclaim}
\label{Claim-3} If $v\prep g(s)$ then 
$y_v\prer s$ for $v\in \ap\setm A$ and $s\in \ar$.
\end{cclaim}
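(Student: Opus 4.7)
The plan is to verify Claim~3 by case analysis on which summand of $\ar=\ap\cup\aq\cup Y\cup U\cup U'$ contains $s$. Since $v\in\ap\setm A$, the index of $v$ lies in $F\cup L\cup D\cup M$; hence $v\in A'$ and $y_v\in Y$ is well defined, as are $\gbar(y_v)=v'\in\aq$ and the relations $\pren{1_p},\pren{1_q},\pre{2},\pren{4_p},\pren{4_q}$ from the definition of $\prer$.

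First, if $s\in\ap$, then $g(s)=s$ and the hypothesis reads $v\prep s$, so the definition of $\pren{1_p}$ immediately yields $y_v\pren{1_p} s$ and hence $y_v\prer s$. Next, if $s\in\aq$, then $g(s)=s'$ is the twin of $s$ in $\ap$, so $v\prep s'$; applying the order-isomorphism $h=h_{\nu,\mu}$ of clause (C), which sends $v\mapsto v'=\gbar(y_v)$ and $s'\mapsto s$, we transport this to $v'\preq s$, and therefore $y_v\pren{1_q} s$. If $s\in Y$, write $s=y_w$ for some $w\in A'$; then $g(s)=w$ and $v\prep w$ is exactly $g(y_v)\prep g(y_w)$, i.e.\ $y_v\pre{2} y_w=s$. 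Finally, if $s=u_x\in U$ (the case $s=u_{x'}\in U'$ being entirely symmetric), then $g(s)=x\in L^+$, so $v\prep x=g(y_x)$ gives $y_v\pre{2} y_x$ and in particular $y_v\preceq^* y_x$; the definition of $\pren{4_p}$ then delivers $y_v\pren{4_p} u_x$, hence $y_v\prer s$.

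In every case the degenerate instance $v=g(s)$ reduces to a reflexive use of $\prep$ or of the partial order $\preceq^*$ supplied by Lemma~\ref{Lemma-2.19}, and is therefore harmless. There is no serious obstacle here: the clauses defining $\prer$ were set up precisely so that in each of the five summands of $\ar$ exactly one of them fires. The only point worth emphasising is the case $s\in\aq$, where the order-isomorphism $h_{\nu,\mu}$ must be invoked in order to move the given $\prep$-relation on $\ap$ over to a $\preq$-relation on $\aq$, so that the $\pren{1_q}$-clause (rather than the $\pren{1_p}$-clause) becomes available.
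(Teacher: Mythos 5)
Your proof is correct and follows essentially the same route as the paper's own argument: a case analysis on which summand of $\ar$ contains $s$, identifying in each case the defining clause of $\prer$ (namely $\pren{1_p}$, $\pren{1_q}$, $\pre{2}$, or $\pren{4_p}$/$\pren{4_q}$) that applies. You merely make explicit two points the paper leaves implicit, the transfer of $v\prep g(s)$ to $\gbar(y_v)\preq s$ via the isomorphism $h_{\nu,\mu}$ and the symmetric case $s\in U'$.
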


\begin{proof}
If $s\in \ap$ ($s\in \aq$)  then $g(s)=s$  ($g(s)=s'$) and so 
$y_v \pren {1p} s$  ($y_v \pren {1q} s$).

If $s=y_x$ for some $x\in A_p$ then $g(s)=x$ and so 
$y_v \pre 2 y_x$. 

If $s=u_x$ for some $x\in L^+$
then $y_v \prer y_x$, and so $y_v \pren {4p} u_x$.
\end{proof}

\begin{cclaim}
\label{Claim-4}  If $x\prer y$ and
${\delta}_{g(x)}<{\delta}_{g(y)}$ then there is $a\in A$ such that 
$x\prer a \prer y$. 
\end{cclaim}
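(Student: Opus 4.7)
The plan is to deduce this immediately from three earlier facts: Claim~\ref{Claim-2} (monotonicity of $g$ with respect to $\prer$ and $\prep$), Claim~\ref{Claim-1} (which characterizes the $\prer$-relation between an arbitrary element of $A_r$ and an element of $A$), and Claim~\ref{Claim-2.11} (which provides an intermediate element of $A$ whenever two $\prep$-comparable elements of $A_p$ have distinct $\delta$-values). All the substantive combinatorial work has already been done in those claims; what remains is to transport an intermediate element from $p$ back up to $r$.

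First, from $x \prer y$ and Claim~\ref{Claim-2} we get $g(x) \prep g(y)$, where $g(x), g(y) \in A_p$. Since by hypothesis ${\delta}_{g(x)} \ne {\delta}_{g(y)}$, Claim~\ref{Claim-2.11} applied inside $p = r_\nu$ produces an element $a \in A$ with
\begin{equation}\notag
g(x) \prep a \prep g(y).
\end{equation}
Now Claim~\ref{Claim-1}(2), applied to $a \in A$ and $x \in \ar$, says $a \prer x$ iff $a \prep g(x)$; by the dual direction we also read off from Claim~\ref{Claim-1}(1) that $x \prer a$ iff $g(x) \prep a$. So $g(x) \prep a$ gives $x \prer a$, and $a \prep g(y)$ gives $a \prer y$. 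Concatenating these by transitivity of $\prer$ (which is a partial order by Lemma~\ref{Lemma-2.20}) yields $x \prer a \prer y$, as required.

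There is really no obstacle: the only thing to double-check is that the hypothesis ${\delta}_{g(x)} < {\delta}_{g(y)}$ genuinely triggers the second alternative in Claim~\ref{Claim-2.11} rather than the first, which is exactly what the strict inequality ensures. In particular, we never need to worry about the case $g(x) = g(y)$, since that would force ${\delta}_{g(x)} = {\delta}_{g(y)}$, contradicting the hypothesis. So the whole argument is a three-line synthesis of Claims~\ref{Claim-1}, \ref{Claim-2}, and~\ref{Claim-2.11}.
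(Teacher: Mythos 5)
Your proof is correct and is essentially the paper's own argument: apply Claim~\ref{Claim-2} to get $g(x)\prep g(y)$, use the hypothesis ${\delta}_{g(x)}<{\delta}_{g(y)}$ to rule out the first alternative of Claim~\ref{Claim-2.11} and obtain $a\in A$ with $g(x)\prep a\prep g(y)$, then transfer back via both directions of Claim~\ref{Claim-1}. Nothing is missing.
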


\begin{proof}
By  Claim \ref{Claim-2} we have $g(x)\prep g(y)$. Hence,
by  Claim \ref{Claim-2.11}, there is $a\in A$ such that $g(x)\prep a\prep g(y)$.
Then, by  Claim \ref{Claim-1}, we have $x\prer a\prer y$.  
\end{proof}

\begin{cclaim}
\label{Claim-5} If $a\in A$ and $x\in \ar$,
  $a\prer x$, then $\pin(a)\in \orb x$ iff $\pin(a)\in \orb{g(x)}$.
\end{cclaim}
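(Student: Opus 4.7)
The plan is to prove the equivalence by case analysis on the form of $x\in\ar$, after establishing a uniform smallness bound on $\pin(a)$. Since $a\in A=\{\xn j:j\in K\}$, we have $\pin(a)={\climit}_j\in Z_0\subseteq Z$. Applying Claim \ref{Claim-1} to $a\prer x$ yields $a=g(a)\prep g(x)$; so either $g(x)=a$, forcing $x=g(x)\in\ap$ (trivial), or $g(x)=\xn i$ for some $i\in F\cup L\cup\ml\cup\mb$, and then $\pin(a)\le\sup({\climit}_i\cap Z)<\gd({\climit}_i)$ by the very definition of $\gd({\climit}_i)$. This last inequality is the smallness input that drives the remaining cases.

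For $x\in\ap$ the claim is immediate. For $x=y_s$ with $s\in A'$, Claim \ref{Claim-2.15} supplies $\orb s\cap\pin(y_s)\subseteq\orb{y_s}$, giving one direction; the converse follows because the elements of $\orb{y_s}\setminus\orb s$ all lie in $[\gd({\climit}_s),\gu({\climit}_s))$, strictly above $\pin(a)$. For $x=u_t$ with $t\in L^+$, a direct calculation gives $\orb{u_t}=\borb{\pib(t)}$ while $\orb t=\borb{\pib(t)}\cup(\oorb{\pin(t)}\setm\pim(t))$; since $\pin(a)<\pin(u_t)=\pim(t)$, the extra piece $\oorb{\pin(t)}\setm\pim(t)$ misses $\{\pin(a)\}$, so the two restrictions to $[0,\pin(a)]$ coincide. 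The variant $x=u_{t'}$ combines this with the twin case below.

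The substantive case is $x\in\aq\setm\ap$, where $x=\xm i$, $g(x)=\xn i=x'$, and one must compare $\orb x$ with $\orb{x'}$ on $[0,\gd({\climit}_i))$. For $i\in F\cup\ml$ the blocks coincide and the basic-orbit tails $\oorb{\pin(x)}\setm\zeta$ and $\oorb{\pin(x')}\setm\zeta$ agree on $[0,\pin(a)]$: goodness places both $\pi$-values in $\jint{{\climit}_i}\setm\gu({\climit}_i)$, so the nested tree intervals $\contint{\pin(x)}m$ and $\contint{\pin(x')}m$ are identical for every $m\le\jjj({\climit}_i)$. For $i\in L\cup\mb$ the blocks differ, but goodness forces both second coordinates $\eta$ to exceed $\xi_i+\oo$, hence the epsilons $\eps{\zeta}{\nu}$ that could distinguish the two block orbits all have indices $\ge\xi_i$ and thus lie at heights $\ge\gd({\climit}_i)>\pin(a)$, so the block orbits agree on $[0,\pin(a)]$.

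The main obstacle will be the sub-case $i\in\mb$, where even the first coordinates $\zeta_1=\pim(x),\zeta_2=\pim(x')$ of the two blocks differ. Here the argument reduces to showing that $\oorb{\zeta_1}$ and $\oorb{\zeta_2}$ agree on $[0,\gd({\climit}_i))$: since both $\zeta_k\in\jint{{\climit}_i}\setm\gu({\climit}_i)$, one has $\contint{\zeta_k}{\jjj({\climit}_i)}=\jint{{\climit}_i}$, and all coarser tree intervals $\contint{\zeta_k}m$ for $m<\jjj({\climit}_i)$ are determined by the ambient tree above $\jint{{\climit}_i}$ and hence coincide for $k=1,2$; below $\jint{{\climit}_i}$ the two basic orbits agree on the same ambient data, and the epsilons contributed at finer levels lie beyond $\gd({\climit}_i)$, leaving no room for disagreement at $\pin(a)$. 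The whole argument is an exercise in tracking the elementary definitions, but this final sub-case is where the specific choices of $\underline\gamma$, $\gamma$ and the goodness assumption from $(\ref{good})$ must all be invoked simultaneously.
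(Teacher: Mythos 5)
Your overall strategy is sound and in fact more thorough than the paper's own argument: the paper reduces at once to $x\notin \ap\cup\aq$ (the twin case $x\in\aq\setm\ap$ is passed over without comment), settles $x\in Y$ by citing Claim \ref{Claim-2.15}, and settles $x\in U\cup U'$ exactly as you do, via $\orb {u_z}\cap\pim(z)=\orb z\cap\pim(z)=\borb{\pib(z)}$. Your preliminary bound $\pin(a)\in Z$ and $\pin(a)<\gd({\climit}_i)$ (which does need the small remark that $\pin(a)<{\climit}_i$, coming from (P2), (P3) and Claim \ref{Claim-2.5}(a)), together with the agreement of the intervals $\contint{\cdot}{m}$ for $m\le \jjj({\climit}_i)$ and the fact that everything contributed at levels finer than $\jjj({\climit}_i)$ lies above $\gd({\climit}_i)$, is exactly the right mechanism; the cases $x\in\ap$, $x\in Y$, $x\in U\cup U'$ are fine (the opening dichotomy should read ``$g(x)\in A$ or not'' rather than ``$g(x)=a$ or not'', but that is cosmetic, since $g(x)\in A$ forces $x=g(x)\in\ap$).

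There is, however, one incorrect step, and it sits at the point you yourself single out as the crux. For $i\in \mb$, goodness does \emph{not} force the second coordinates of the blocks $\pib(\xn i)=\<\zeta_1,\eta_1\>$ and $\pib(\xm i)=\<\zeta_2,\eta_2\>$ to exceed $\xi_i+\oo$: clause (ii) of the definition of goodness constrains only $\pim(\xn i)$, and clause (iii) applies only to $i\in L$; nothing in the thinning controls $\eta_1,\eta_2$ when $i\in\mb$. Your reduction of the $\mb$-case to comparing $\oorb{\zeta_1}$ with $\oorb{\zeta_2}$ discards the parts $\{\eps{\zeta_k}{\iota}:\iota<\eta_k\}$ (resp.\ $\incof{\jint{\zeta_k}}\cap\zeta_k$ when $\cf(\zeta_k)=\oo$) of the two block orbits on the strength of that unjustified bound, so as written there is a gap. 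It is repairable with the tools you already use: since $\zeta_k\ge\gu({\climit}_i)$, if $\cf(\zeta_k)=\oot$ then $\jint{\zeta_k}$ is contained in a member of $\intpart{\jint{{\climit}_i}}$ whose left endpoint is at least $\gu({\climit}_i)>\pin(a)$, so the epsilons $\eps{\zeta_k}{\iota}$ are harmless no matter what $\eta_k$ is; and if $\cf(\zeta_k)=\oo$, then either $\jint{\zeta_k}$ again lies at such a finer level, or $\jint{\zeta_k}=\jint{{\climit}_i}$ and the extended part $\incof{\jint{{\climit}_i}}\cap\zeta_k$ agrees below $\pin(a)$ with the level-$\jjj({\climit}_i)$ contribution common to both basic orbits. (For $i\in L$ your appeal to goodness (iii) is correct when $\cf({\climit}_i)=\oot$; when $\cf({\climit}_i)=\oo$ both block orbits equal $\eorb{{\climit}_i}$, so nothing is needed there.) With this substitution your proof is complete, and unlike the paper's sketch it genuinely covers the case $x\in\aq\setm\ap$, which is what gets used when Claim \ref{Claim-5} is invoked in Lemma \ref{Lemma 2.21}.
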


\begin{proof}
We can assume that  $x\notin A_p\cup A_q $. If $x\in Y$ then Claim
\ref{Claim-2.15} implies the statement. If $x=u_z$ for some $z\in L^+$  then
$g(x)=z$, $\pin(a)<{\delta}_z$ and $\orb z\cap {\delta}_z =\orb
{u_z}\cap {\delta}_z = o_B(\pi_B(z))$, and so we are done.
\end{proof}

\begin{cclaim}
\label{Claim-6}  If $x\in \ar\setm A$, $v\in
  \ap\setm A$,
$v\prec_p g(x)$ and ${\delta}_v={\delta}_{g(x)}$ then $\pin(y_v)\in \orb x$.
\end{cclaim}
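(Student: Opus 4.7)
The plan is to proceed by case analysis on where $x$ lies in the decomposition
$$\ar\setminus A=(\ap\setminus A)\cup(\aq\setminus A)\cup Y\cup U\cup U'.$$
In every case write $w=g(x)\in A'$; the hypothesis gives $v\prep w$ with ${\delta}_v={\delta}_w$, and the construction of $y_v$ places $\piz(y_v)={\beta}_v$ in the set $\eorb{{\delta}_v}\cap[\gd({\delta}_v),\gu({\delta}_v)) = \eorb{{\delta}_w}\cap[\gd({\delta}_w),\gu({\delta}_w))$. The five cases are treated by invoking Claims~\ref{Claim-2.14} and~\ref{Claim-2.15} after identifying which of them applies.

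If $x\in\ap\setminus A$ then $w=x\in A'$ and Claim~\ref{Claim-2.14} directly yields $\beta_v\in\orb x$. If $x\in\aq\setminus A$ then $w=x'$, and the full form of Claim~\ref{Claim-2.14} (the intersection $\orb w\cap\orb{w'}=\orb w\cap\orb x$) handles this case. If $x=y_s$ for some $s\in A'$ then $w=s$, $v\prep s$, and ${\delta}_v={\delta}_s$; Claim~\ref{Claim-2.15} gives
$$\orb{y_s}\supset\{{\beta}_z:{\delta}_z={\delta}_s\text{ and }z\prec_p s\},$$
so ${\beta}_v\in\orb{y_s}=\orb x$.

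The remaining cases are $x=u_t$ with $t\in\ap$ (so $w=t\in L^+$) and $x=u_t$ with $t\in\aq$ (so $w=t'\in L^+$). In both cases, since $\pi_B(u_t)\ne S$ and by construction $\piz(u_t)=\pim(t)=\pim(u_t)$, the defining formula for $\orbf$ collapses to $\orb{u_t}=\borb{\pib(u_t)}$. It therefore suffices to show ${\beta}_v\in\borb{\pib(t)}$. Inspecting the proof of Claim~\ref{Claim-2.14} in the $L$-case, the inclusion ${\beta}_v\in\orb{\,\cdot\,}$ is witnessed precisely by ${\beta}_v\in\borb{\pib(\,\cdot\,)}$, as a direct consequence of goodness (Definition~\ref{Definition 2.8}(iii)). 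Applying this to $t$ itself when $t\in\ap$, and to the $q$-twin $t$ (using the goodness of $r_{\mu}=q$, assumption (\ref{good})) when $t\in\aq$, we obtain ${\beta}_v\in\borb{\pib(t)}=\orb{u_t}=\orb x$.

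The main subtlety lies in the last pair of cases: one must distinguish, inside $\orbf=\borb{\pib(\cdot)}\cup(\oorb{\piz(\cdot)}\setm\pim(\cdot))$, which summand actually contains ${\beta}_v$, because $\orb{u_t}$ retains only the block-orbit piece. That the block-orbit piece is the correct one for the $L$-case --- and that this works for both the $p$-side and the $q$-side of each $L$-twin --- is exactly what Definition~\ref{Definition 2.8}(iii) has been tailored to guarantee on every $r_{\nu}$ via the thinning step~(\ref{good}).
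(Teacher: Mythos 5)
Your proof is correct and follows essentially the same route as the paper's: Claim \ref{Claim-2.14} for $x\in(\ap\cup\aq)\setm A$, the $\{{\beta}_z:{\delta}_z={\delta}_x,\ z\prec_p x\}$ part of Claim \ref{Claim-2.15} for $x\in Y$, and goodness plus the identification of $\orb{u_t}$ with the block orbit for $x\in U\cup U'$. Your explicit splitting of the $U$ and $U'$ cases (invoking goodness of $q$ for the latter) is a small extra precision over the paper's wording, but the argument is the same.
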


\begin{proof}  
We have $\pin(y_v)= \beta_v\in \eorb{{\climit}_v}\cap
[\gd({\delta_v}),\gu(\delta_v))$. If $x\in (\ap\cup
\aq)\setm A$, then $\beta_v\in \orb x$ by Claim \ref{Claim-2.14}.

 If $x=y_z$ for some $z\in A_p$, we have $z=g(x)$ and then
 ${\beta}_v\in \orb {y_z}$ by  Claim \ref{Claim-2.15}.

 If $x=u_z$ for some $z\in L^+$ then ${\beta}_v \in \orb z$
 because $p$ is good. Now as ${\beta}_v<{\delta}_z$ and
 $\orb z\cap {\delta}_z=\orb {u_z}\cap {\delta}_z$, the statement
 holds.
\end{proof}

\begin{cclaim}
\label{Claim-8}
If $s\in \ar\setm (A\cup Y)$ and $v=g(s)$ then $\pin(y_v)\in \orb s$.
\end{cclaim}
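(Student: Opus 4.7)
The plan is to split $A_r\setm (A\cup Y)$ into the four disjoint pieces $\ap\setm A$, $\aq\setm A$, $U$, and $U'$, and to treat each separately. In each case I will identify $v=g(s)$, read off $\piz(y_v)=\beta_v$ from the construction of the $y_x$'s, and verify $\beta_v\in\orb s$ using Claim \ref{Claim-2.14} together with the explicit formula for $\orbf$ on $\und$.

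The two cases $s\in(\ap\cup \aq)\setm A$ reduce almost directly to Claim \ref{Claim-2.14}. If $s\in \ap\setm A$, then $s\in A'$ and $g(s)=s$, so $\piz(y_v)=\beta_s$ and Claim \ref{Claim-2.14} with $x=s$ yields $\beta_s\in \orb s$. If $s\in \aq\setm A$, then $g(s)=s'$, the $\ap$-twin of $s$, so $\piz(y_v)=\beta_{s'}$; applying Claim \ref{Claim-2.14} to $x=s'\in A'$ gives $\beta_{s'}\in \orb{s'}\cap \orb s$.

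The heart of the argument is the cases $s\in U\cup U'$. Here $s=u_z$ or $s=u_{z'}$ with $z\in L^+$; the extended definition of $g$ assigns $g(s)=z$ in both subcases, so $y_v=y_z$ and $\piz(y_v)=\beta_z$. For $s=u_z$ I use $\pib(u_z)=\pib(z)\ne S$ and $\piz(u_z)=\pim(z)=\delta_z$ to obtain
\begin{equation*}
\orb{u_z}=\borb{\pib(z)}\cup(\oorb{\delta_z}\setm \delta_z)=\borb{\pib(z)},
\end{equation*}
while Claim \ref{Claim-2.14} places $\beta_z$ in $\orb z=\borb{\pib(z)}\cup(\oorb{\piz(z)}\setm \pim(z))$. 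The inequality $\beta_z<\gu(\delta_z)\le \delta_z=\pim(z)$ rules out the second set, forcing $\beta_z\in \borb{\pib(z)}=\orb{u_z}$. The case $s=u_{z'}$ is symmetric: Claim \ref{Claim-2.14} also yields $\beta_z\in \orb{z'}$, one has $\pim(z')=\pim(z)=\delta_z$ by the defining property of the index set $L$, and the same collapse argument puts $\beta_z$ into $\borb{\pib(z')}=\orb{u_{z'}}$.

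The hard part will be the bookkeeping in the $U\cup U'$ cases, specifically (a) recognizing that the ``free'' portion $\oorb{\piz(u_z)}\setm \pim(u_z)$ of the orbit formula collapses to the empty set because $\piz(u_z)=\pim(u_z)$, and (b) verifying $\gu(\delta_z)\le \delta_z$; the latter is immediate when $\cf\delta_z=\oo$ (since then $\gu(\delta_z)=\delta_z$), and uses Proposition \ref{Proposition-2.1} (which gives $J(\delta_z)^+=\delta_z$, hence $\gu(\delta_z)=\veps{J(\delta_z)}{\xi_z+\oo}<\delta_z$) when $\cf\delta_z=\oot$.
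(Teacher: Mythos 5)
Correct, and essentially the paper's own argument: you split $A_r\setminus(A\cup Y)$ into $(A_p\cup A_q)\setminus A$ and $U\cup U'$, and in each case the point is that $\pi(y_v)=\beta_v\in \overline{o}(\delta_v)\cap[\underline{\gamma}(\delta_v),\gamma(\delta_v))$ lands in $o^*(s)$ by the goodness of $p$ and $q$ (packaged as Claim \ref{Claim-2.14}), with the orbit of an element of $U\cup U'$ collapsing to its block orbit since $\gamma(\delta_v)\le\delta_v=\pi_{-}(s)$. Your handling of the $U'$ case through $o^*(z')$ is, if anything, slightly more careful than the paper's remark that the block orbits of $s$ and $g(s)$ coincide, but the route is the same.
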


\begin{proof}
We have
$\pin(y_v)={\beta}_v\in  
\eorb{{\delta}_v}\cap \gu({\delta}_v) $.
If $s\in \ap\cup \aq$ then $\eorb{{\delta}_v}\cap \gu({\delta}_v)\subs \orb s$ because $p$
and $q$ are good.
If $s=u_{g(s)}$ then the block orbit of $s$ and the block orbit of
$g(s)$ are the same and the block orbit of $g(s)$ contains 
$\eorb{{\delta}_v}\cap \gu({\delta}_v)$ because $p$ is good.  
\end{proof}

\begin{cclaim}
\label{Claim-9} 
If $w\in \ap$, $s\in \ar$, $w\prer s $ and ${\delta}_w={\delta}_{g(s)}$ 
then $s\in \ap$.
\end{cclaim}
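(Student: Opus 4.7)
The plan is to assume $s\notin \ap$ and derive a contradiction by producing an element $a\in A$ with $a\prep g(s)$ and $\delta_a=\delta_{g(s)}$. There are four possibilities for the location of $s$: $s\in \aq\setm A$, $s\in Y$, $s\in U$, or $s\in U'$. In each of these $g(s)\in \ap\setm A$: for $s\in \aq\setm A$ we have $g(s)=s'\notin A$; for $s=y_x\in Y$ the element $g(s)=x$ lies in $A'\subs \ap\setm A$; and for $s\in U\cup U'$ we have $g(s)=x$ with $x\in L^+$, so $x\notin A$ since $L\cap K=\empt$. Thus $g(s)=\xp i$ with $i\in F\cup D\cup M\cup L$.

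By Claim~\ref{Claim-2} applied to $w\prer s$, we have $w=g(w)\prep g(s)$, and by assumption $\delta_w=\delta_{g(s)}$. If $w\in A$, take $a=w$. Otherwise, unfold the atomic relation witnessing $w\prer s$. If $w\prepq s$ with $s\in \aq\setm A$, the chain must cross $A$ to leave $\ap$, and collapsing consecutive $\prep$- and $\preq$-steps (using that $\prep$ and $\preq$ agree on $A$) yields $a\in A$ with $w\prep a$ and $a\preq s$, which via the isomorphism $h$ (identity on $A$) translates into $a\prep s'=g(s)$ in $p$. If $w \pren{3_p} s$ with $s\in Y$, the definition of $\pren{3_p}$ directly provides $a\in A$ with $w\prep a\prep g(s)$; the case $w \pren{3_q} s$ requires $w\in A$ and is already handled. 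If $w \pren{4_p} s$ or $w \pren{4_q} s$ with $s\in U\cup U'$, then $w\preceq^* y_x$, and by the structure of $\preceq^*$ the only atomic relation witnessing this with $w\in \ap$ is $\pren{3_p}$ (or $\pren{3_q}$, already covered), again producing $a$. Claim~\ref{Claim-2.10} then gives $\delta_w\le\delta_a\le\delta_{g(s)}$, forcing $\delta_a=\delta_{g(s)}$.

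Now write $a=\xp k$ with $k\in K$, so $\pin(a)=\delta_k=\delta_i$, and split on the category of $i$. If $i\in F\cup D$, Claim~\ref{Claim-2.5}(a) gives $\pin(\xp i)<\delta_i=\pin(a)$, contradicting (P2). If $i\in M$, then $\pim(\xp i)<\delta_i=\pin(a)$, so (P3)(a) forces $\pib(a)=\pib(\xp i)\ne S$; but Claim~\ref{Claim-2.9} then demands $\{k,i\}\in \br M;2;$, contradicting $k\in K$. If $i\in L$, then $\pim(\xp i)=\delta_i=\pin(a)$ again triggers (P3)(a) and Claim~\ref{Claim-2.9}, requiring $\{k,i\}\in \br L;2;$, again contradicting $k\in K$.

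The main obstacle is the intermediate extraction of $a\in A$: $\prer$ is layered over many auxiliary relations, and one must verify that any witness of $w\prer s$ with $s\notin \ap$ factors through an element of~$A$. This is most delicate for $s\in U\cup U'$, where $\prer$ unfolds first to $\preceq^*$ and then, within $\preceq^*$, to a $\pren{3}$-type relation via $y_x$, repeatedly using that $h$ is the identity on $A$ to transport $\preq$-chains into $\prep$-chains in $p$. Once this extraction is in hand, the four-case finale uses only (P2), (P3)(a), and the structural Claims~\ref{Claim-2.5}, \ref{Claim-2.9}, and~\ref{Claim-2.10}.
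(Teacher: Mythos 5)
Your proposal is correct and takes essentially the same route as the paper: split on whether $s$ lies in $\aq\setminus\ap$, in $Y$, or in $U\cup U'$, extract an element $a\in A$ with $w\prep a\prep g(s)$ (the $U\cup U'$ case reducing through $y_{g(s)}$, i.e.\ through $\pren{3_p}$, just as in the paper), and contradict $\delta_w=\delta_{g(s)}$. The only difference is that you make explicit, via Claims~\ref{Claim-2.5}, \ref{Claim-2.9}, \ref{Claim-2.10}, (P2) and (P3), why such an $a$ with $\delta_a=\delta_{g(s)}$ is impossible, a step the paper leaves implicit.
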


\begin{proof}
If $s\in \aq\setm \ap$ then $w\prepq s$ and so there is $a\in A$
such that $w\prep a\preq s$ which contradicts ${\delta}_w={\delta}_{g(s)}$.  

If $s=y_{g(s)}$ then $w\pren {3p} s$, i.e. there is $a\in A$
with $w\prep a\prep g(s)$ which contradicts
${\delta}_w={\delta}_{g(s)}$.

If $s=u_{g(s)}$ then $w \pren {4p} u_{g(s)}$, i.e. 
$w\prer y_{g(s)}$, but this was excluded in the previous paragraph.
 \end{proof}

\begin{lemma}
\label{Lemma 2.21}  There is a function
$\ir\supset \ip\cup \iq$ such that  $\<\ar,\prer,\ir\>$ satisfies
(P4) and (P5).
\end{lemma}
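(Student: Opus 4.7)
The plan is to define $\ir$ by reducing to $\ip$ through the projection $g:\ar\to\ap$ constructed above, and then lifting the result back to $\ar$ by a short case analysis on whether $s,t$ lie in $\ap$, $\aq$, $Y$, or $U\cup U'$. Concretely: if $s,t$ are $\prer$-comparable, set $\ir\{s,t\}$ to the $\prer$-smaller of them; otherwise compute $w=\ip\{g(s),g(t)\}$. If $w$ is undefined then, by Claim \ref{Claim-2}, any common lower bound $a$ would give $g(a)\prep g(s),g(t)$, which is impossible; so $s,t$ are $\prer$-incompatible and we set $\ir\{s,t\}=\undef$. If $w\in A$ set $\ir\{s,t\}=w$. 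If $w\in A'=\ap\setm A$ set
\begin{equation}\notag
\ir\{s,t\}=\begin{cases}
w&\text{if $s,t\in\ap$,}\\
w'&\text{if $s,t\in\aq$,}\\
u_w\text{ (resp.\ $u_{w'}$)}&\text{when one of $s,t$ is $u_w$ (resp.\ $u_{w'}$),}\\
y_w&\text{in the remaining ``mixed'' cases.}
\end{cases}
\end{equation}

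For (P4), the common-lower-bound direction in each case follows from Claim \ref{Claim-3} ($y_w\prer s$ whenever $w\prep g(s)$ and $w\in A'$), Claim \ref{Claim-1} ($w\prer s$ for $w\in A$), and the definitions of $\pren{4p},\pren{4q},\pren{5p},\pren{5q}$ in the $U$-cases. For maximality, from $a\prer s$ and $a\prer t$ Claim \ref{Claim-2} gives $g(a)\prep w$. If $a\in A$ then Claim \ref{Claim-1} immediately yields $a\prer\ir\{s,t\}$; otherwise I trace the relations $a\prer s,a\prer t$ back through $\pren{1p},\ldots,\pren{5q}$, using the structural observation that $\prepq$ restricted to $\ap$ equals $\prep$ (every $\preq$-chain through $\aq\setm\ap$ must enter and exit via $A$, on which $\prep$ and $\preq$ coincide by (C)(a)), and then invoke Claim \ref{Claim-3} or Claim \ref{Claim-4} to conclude.

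For (P5), when $\ir\{s,t\}=w\in A$ we have $\piz(w)\in\orb{g(s)}\cap\orb{g(t)}$ by (P5) applied to $p$, and Claim \ref{Claim-5} transfers this to $\orb{s}\cap\orb{t}$. When $\ir\{s,t\}=y_w$, one has $\piz(y_w)=\beta_w\in\eorb{\climit_w}\cap [\gd(\delta_w),\gu(\delta_w))$; Claim \ref{Claim-2.14} gives $\beta_w\in\orb{w}\cap\orb{w'}$, and Claim \ref{Claim-6} yields $\beta_w\in\orb{s}$ in the subcase $\delta_w=\delta_{g(s)}$. In the subcase $\delta_w<\delta_{g(s)}$ I apply Claim \ref{Claim-4} to produce $a\in A$ with $y_w\prer a\prer s$, after which Claim \ref{Claim-5} applies. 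The $U$-cases are analogous and use Claim \ref{Claim-8} together with the fact that $\orb{u_z}$ and $\borb{\pib(z)}$ coincide below $\delta_z$.

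The main obstacle will be the bookkeeping: there are roughly ten defining clauses for $\prer$ and correspondingly many subcases of $\{s,t\}$, and in each one must verify both that $\ir\{s,t\}$ is $\prer$-below $s$ and $t$ and that every other common lower bound is $\prer$-below $\ir\{s,t\}$. Once the definitions are in place, however, Claims \ref{Claim-1}--\ref{Claim-9} collect precisely the facts needed to make each verification routine.
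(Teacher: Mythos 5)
Your overall architecture is the same as the paper's: keep $i_p$, $i_q$ on $[A_p]^2$ and $[A_q]^2$, and for a compatible but incomparable mixed pair $\{s,t\}$ pass to $v=i_p\{g(s),g(t)\}$ and lift back, using Claims \ref{Claim-1}--\ref{Claim-9}. But your definition of $i_r$ on pairs meeting $U\cup U'$ is wrong. Suppose $s=u_w$, so $w=g(s)\preceq_p g(t)$. If $t\in A_p$, then $w\preceq_p t$ gives $u_w\preceq_r t$ by the clause defining $\preceq_r$ on $U$, so the pair is comparable and your clause never fires legitimately. If $t\notin A_p$ (for instance $t\in A_q\setminus A_p$ with $w\preceq_p g(t)$, a configuration which is compatible but incomparable, since $y_w$ is a common lower bound by Claim \ref{Claim-3}), then $u_w$ is \emph{not} $\preceq_r$-below $t$ at all: the only elements $\preceq_r$-above $u_w$ are $u_w$ itself and the elements of $A_p$ lying $\preceq_p$-above $w$. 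So setting $i_r\{s,t\}=u_w$ violates (P4) in exactly the situation the clause was introduced for. The paper never uses elements of $U\cup U'$ as infima: in every mixed case the infimum is $v$ (if $v\in A$) or $y_v$, and pairs involving $u$-elements are covered because $y_v\preceq_r u_v$ by construction of $\preceq_r$.

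There are two further gaps in the $y_v$ case. For (P5), Claims \ref{Claim-6} and \ref{Claim-8} require $\delta_v=\delta_{g(s)}=\delta_{g(t)}$, and your subcase $\delta_v<\delta_{g(s)}$ cannot be settled by ``Claim \ref{Claim-4} then Claim \ref{Claim-5}'': the interpolated $a\in A$ has nothing to do with $\beta_v$, so Claim \ref{Claim-5} gives no information about $\beta_v\in o^*(s)$. What the paper does instead is show this subcase is vacuous: if $v\notin A$ and $s,t$ are $\preceq_r$-incomparable, then $\delta_{g(s)}=\delta_{g(t)}=\delta_v$, by Claim \ref{Claim-2.13} when $g(s),g(t)$ are incomparable, and by Claims \ref{Claim-2.11} and \ref{Claim-1} (an interpolated $a\in A$ would yield $s\preceq_r a\preceq_r t$, contradicting incomparability) when they are comparable; one also needs Claim \ref{Claim-2.12} to see $s,t\notin A$ before invoking Claims \ref{Claim-6} and \ref{Claim-8}. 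Finally, your maximality argument is only sketched where it is delicate: for a common lower bound $w$ with $\delta_{g(w)}=\delta_v$ you need Claim \ref{Claim-9} to rule out $w\in A_p\cup A_q\cup U\cup U'$ and conclude $w=y_z$ with $z\preceq_p v$, hence $w\preceq_r y_v$; this step does not appear in your outline.
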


\newcases
\begin{proof}

\mbox{ }
If $\{s,t\}\in \br \ap;2;$  ($\{s,t\}\in \br \aq;2;$)
we will have $\ir\{s,t\}=\ip\{s,t\}$  ($\ir\{s,t\}=\iq\{s,t\}$), and so 
 (P5) holds because $p$ and $q$ satisfy (P5).

To check (P4) we should prove that 
 $\ip\{s,t\}$ is the greatest common lower bound of 
$s$ and $t$ in $(\ar,\prer)$. 

Indeed, let
$x\prer s,t$. We can assume that $x\notin \ap$.
Then, we distinguish the following
three cases. 
\begin{acase}
$x\in \aq\setm \ap$.
\end{acase}
Then there are $a,b\in A$ such that $x\preq a \prep s$ and $x\preq
b\prep t$. Thus $x\preq \iq\{a,b\}=\ip\{a,b\}\prep \ip\{s,t\}$ and
so $x \prepq \ip\{s,t\}$.
\begin{acase}
$x\in Y$.
\end{acase}
Then $x\pren {1p}s$ and $x\pren {1p}t$ , i.e. $g(x)\prep s$ and
$g(x)\prep t$. So $g(x)\prep \ip\{s,t\}$ and hence $x\pren
{1p}\ip\{s,t\}$.
\begin{acase}
$x\in U$.
\end{acase}
Put $x=u_z$ for some $z\in L^+$.
Since $x\prer s,t$, we have that $u_z\pren{5p} s,t$, and thus
$z\; \mip\; s,t$. So $z\; \mip\; i_p\{s,t\}$, and hence $x\;\mir\;
i_p\{s,t\}$.

Assume now that 
$s,t\in \ar$ are $\prer$-compatible, but $\prer$-incomparable
elements, $\{s,t\}\notin \br \ap;2;\cup \br \aq;2;$. 
Write $v=\ip\{g(s),g(t)\}$.  
 Note that, by  Claim \ref{Claim-2}, $g(s)$ and $g(t)$ are compatible in $p$
 and hence $v\in A_p$.
Let
\begin{displaymath}
\ir\{s,t\}=\left\{ 
\begin{array}{ll}
v&\text{if $v\in A$,}\\  
y_v&\text{otherwise.}  
 \end{array}
\right .  
\end{displaymath}

\renewcommand{\theccase}{\Roman{ccase}}
\renewcommand{\thesscase}{\thescase.\roman{sscase}}

\begin{ccase}
$v\in A$.  
\end{ccase}

Then $g(s)$ and $g(t)$ are incomparable in $\ap$.
Indeed, $g(s)\prep g(t)$ implies $v=g(s)$ and so 
$s=g(s)\prer t$ by  Claim \ref{Claim-1}.

 Thus $\pin(v)\in \orb{g(s)}\cap \orb{g(t)}$ by applying (P5) in $p$.
 Note that $v\prer s,t$ by  Claim \ref{Claim-1}. So, $\pin(v)\in \orb{s}\cap \orb{t}$
by  Claim \ref{Claim-5}. Hence (P5) holds.

We have to check that $v$ is
the greatest lower bound of $s,t$ in $(A_r,\prer)$.
We have $v\prer s,t$ by  Claim \ref{Claim-1}.

Let $w\in \ar$, $w\prer s,t$. Then 
$g(w)\prep g(s),g(t)$ by  Claim \ref{Claim-2}. So 
$g(w)\prep v$. Then $w\prer v$ by  Claim \ref{Claim-1}. 

\begin{ccase}
$v\notin A$.  
\end{ccase}

Then ${\delta}_{g(s)}={\delta}_{g(t)}={\delta}_v$
by  Claim \ref{Claim-1} and  Claim \ref{Claim-2.11} if $g(s)$ and $g(t)$ are comparable in $\ap$,
and by  Claim \ref{Claim-2.13} if $g(s)$ and $g(t)$ are incomparable in $\ap$.

If $g(s)$ and $g(t)$ are incomparable in $A_p$ then $v \prec_p
g(s),g(t)$ and $s,t\not\in A$ by  Claim \ref{Claim-2.12}. So, $\pin(y_v)\in
\orb s\cap \orb t$ by Claim \ref{Claim-6}.

If $g(s)\prec_p g(t)$  then $s\notin Y$ by  Claim \ref{Claim-3}
     and $s\not\in A$ because $v=g(s)\not\in A$.
Then $\pin(y_v)\in \orb{s}$ by  Claim \ref{Claim-8}.
Also, since $v=g(s) \prec_p g(t)$ we infer from Claim \ref{Claim-1} that
$t\not\in A$ and so we have that $\pin(y_v)\in \orb t$ by  Claim \ref{Claim-6}.
Hence (P5) holds.

We have to check that $y_v$ is
the greatest common lower bound of $s,t$ in $(A_r,\prer)$.
First observe that $y_v\prer s,t$ by  Claim \ref{Claim-3}. 

Let $w\prer s,t $.

Assume first that 
${\delta}_{g(w)}<{\delta}_v$.  
Then there are $a,b\in A$ with $w\prer a\prer s$ and 
$w\prer b\prer t$ by  Claim \ref{Claim-4} and so 
$g(w) \prep \ip\{a,b\} \prep v$ by using Claim \ref{Claim-1}.
Now since $g(y_v)=v$, we obtain  $w\prer \ip\{a,b\}\prer y_v$ again 
by  Claim \ref{Claim-1}.

Assume now that 
${\delta}_{g(w)}={\delta}_v$.  
 Since $\{s,t\}\not\in [A_p]^2\cup [A_q]^2$, we have that
 $w\not\in U\cup U'$.
Then, by  Claim \ref{Claim-9},  
$w=y_z$ for some $z\in \ap$. Then $z\prep g(s)$ and $z\prep g(t)$
by Claim \ref{Claim-2},
and so $z\prep v$. Thus $y_z\prer y_v$.
\end{proof}

 Now our aim is to verify condition (P6). First, we need some
 preparations.

For every  $x,y\in \ar$ with  $x\prer y$
let 
\begin{displaymath}
\pi_x(y)=\left\{ 
\begin{array}{ll}
\pin(y)&\text{if $\pib(x)=\pib(y)$,}\\  
\pim(y)&\text{if $\pib(x)\ne\pib(y)$.}  
\end{array}
\right .  
\end{displaymath}

  Note that for every $x,y\in\ar$ with $x\prer y$,
an interval  $\Lambda\in \mathbb I$ isolates  $x$ from $y$ iff
 $\bott \Lambda <\piz (x)<\topp
\Lambda\le \pi_x (y) $.

\begin{cclaim}
\label{Claim-10}
Let $a\in A$ and $t\in \ar$, $a\prer t$. 
If $\Lambda$ isolates $a$ from $t$
then $\Lambda$ isolates $a$ from $g(t)$.
\end{cclaim}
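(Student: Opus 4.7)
My plan is to reduce the claim to showing $\topp \Lambda \le \pi_a(g(t))$, since the other two requirements of isolation are statements purely about $\pi(a)$, $\bott \Lambda$, and $\topp \Lambda$. By Claim~\ref{Claim-2} together with $g(a)=a$ we have $a \prep g(t)$, so the quantity $\pi_a(g(t))$ is well-defined in the $p$-structure. The proof will case-split on the form of $t \in \ap \cup \aq \cup Y \cup U \cup U'$; the case $t \in \ap$ is trivial since $g(t)=t$. The central technical device will be the following tree-structural observation: if $\delta$ satisfies $\jint \delta \in \mathbb I$, $\cf(\delta)=\oot$, and $\topp{\jint \delta}=\delta$, and if $\Lambda \in \mathbb I$ satisfies $\bott \Lambda < \gd(\delta)$ and $\topp \Lambda < \delta$, then $\topp \Lambda \le \gd(\delta)$. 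This uses only the nestedness of $\mathbb I$: $\Lambda$ cannot properly contain $\jint \delta$ (else $\topp \Lambda \ge \delta$); if $\Lambda$ is disjoint from and to the left of $\jint \delta$, then $\topp \Lambda \le \bott{\jint \delta}\le \gd(\delta)$; and if $\Lambda \subs \jint \delta$, then $\Lambda$ sits inside some level-$(j(\delta)+1)$ piece $[\veps{\jint \delta}{\al}, \veps{\jint \delta}{\al+1})$, where $\bott \Lambda < \gd(\delta) = \veps{\jint \delta}{\xi_\delta}$ forces $\al < \xi_\delta$ and therefore $\topp \Lambda \le \veps{\jint \delta}{\al+1} \le \gd(\delta)$.

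The main case is $t \in \aq \setm \ap$, with $g(t)=t'$, split by the partition class of the common index $i \notin K$. If $i \in L$, then (C)(c) combined with $\pib(t) \ne \pib(t')$ forces $\pib(a) \ne \pib(t), \pib(t')$, so by (D)(c) we have $\pi_a(t) = \pim(t) = \pim(t') = \pi_a(t')$. If $i \in F$ and $\pib(a) \ne \pib(t)$, the block first-coordinate equality $\pim(t)=\pim(t')$ again yields $\pi_a(t)=\pi_a(t')$. In the remaining subcases ($i \in F$ with $\pib(a)=\pib(t)$, $i \in \ml$, and $i \in \mb$) the goodness of $p$ and $q$ places $\pi_a(t), \pi_a(t')$ in $[\gu(\delta_i), \delta_i)$ (these are $\piz$-values for $F \cup \ml$ and $\pim$-values for $\mb$, where in the $\mb$ case (C)(c) forces $\pib(a) \ne \pib(t), \pib(t')$). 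Now $\piz(a)=\delta_j$ for some $j \in K$, hence $\piz(a) \in Z$, and from $\piz(a) < \topp \Lambda \le \pi_a(t) < \delta_i$ we obtain $\piz(a) \le \sup(Z \cap \delta_i) < \gd(\delta_i)$ by the choice of $\xi_i$. Since $\cf(\delta_i)=\oot$ by Claim~\ref{Claim-2.5}, the tree-structural observation applies and yields $\topp \Lambda \le \gd(\delta_i) \le \gu(\delta_i) \le \pi_a(t')$.

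The remaining cases I can dispatch quickly. If $t = u_x$ for some $x \in L^+$, then $g(t)=x$ and $\piz(u_x)=\pim(x)$; since $\piz(x) > \pim(x)$ by $x \in L^+$, one directly checks $\pi_a(t) \le \pi_a(x)$. A symmetric argument handles $t=u_{x'}$, using (C)(c) to force $\pib(a) \ne \pib(x), \pib(x')$. For $t=y_s \in Y$ with $s \in \ap \setm A$, the value $\piz(y_s)=\be_s$ lies in $[\gd(\delta_s), \gu(\delta_s))$; when $s \in F$ with $\pib(a)\ne\pib(s)$ the equality $\pim(y_s)=\pim(s)$ gives $\pi_a(y_s)=\pi_a(s)$ directly, and in all other subcases one checks using goodness of $p$ (or, for $s \in L$, the block identity $\pim(s)=\delta_s$) that $\pi_a(s) > \be_s \ge \topp \Lambda$. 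The main obstacle will be the bookkeeping: keeping track of which of $\piz, \pim$ the quantity $\pi_a(\cdot)$ resolves to across the full partition $\sigma = K \cup F \cup L \cup \ml \cup \mb$ combined with the two possibilities for $\pib(a)$; once this is organized, the tree-structural observation provides a single uniform reason for every non-trivial subcase.
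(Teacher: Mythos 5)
Your proof is correct, and its overall shape is the same as the paper's: reduce isolation to the single inequality $\topp\Lambda\le\pi_a(g(t))$, split on the type of $t$ in $\ap\cup\aq\cup Y\cup U\cup U'$ and on the partition class of its index, use Claim \ref{Claim-2.9}/(C)(c) to control $\pib(a)$, goodness to place $\pi_a(t')$ above $\gu(\delta_i)$, and the direct equalities $\pi_a(y_v)\le\pi_a(v)$, $\pi_a(u_v)=\pi_a(v)$ for the $Y$ and $U\cup U'$ cases. The one genuine difference is the key step in the $F\cup\ml\cup\mb$ (and $F$ with $\pib(a)=\pib(t)$) cases: the paper gets $\topp\Lambda<\gd(\delta_t)$ by noting that $\pin(a)\in Z_0\cap\Lambda$, so $\topp\Lambda\in Z$ by the $I^+$-closure built into the definition of $Z$, and $Z\cap[\gd(\delta_t),\delta_t)=\empt$; you instead prove a small nestedness lemma for the tree $\mathbb I$ showing that $\bott\Lambda<\gd(\delta_t)$ and $\topp\Lambda<\delta_t$ already force $\topp\Lambda\le\gd(\delta_t)$, using only $\pin(a)\in Z\cap\delta_t$ (not the closure of $Z$ under interval tops), together with $\cf(\delta_t)=\oot$ and $\topp{\jint{\delta_t}}=\delta_t$ from Claim \ref{Claim-2.5}(a). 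Both arguments are valid; the paper's is shorter precisely because $Z$ was closed under $I\mapsto I^+$ for this purpose, while yours is slightly more self-contained and shows that this closure property is not needed at this particular point.
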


\begin{proof}
    The statement is obvious if $t\in A_p$. Assume that $t\in
    A_q\setminus A_p$. Note that since $\Lambda$ contains an element
    of $A$, we have that $\Lambda^+\in Z$.  Now if $t\in D\cup F\cup
    M$  we have that $Z\cap \pin(t) = Z\cap \pin(g(t)) = Z\cap
    \gamma(\delta_t)$, and so we are done. If $t\in L$  then as
    $a\prer t$ we infer that
    $\pi_B(a)\neq \pi_B(t)$ and $\pin(a) < \delta_t = \pi_{-}(t)$,
    hence we have $\pin(a) < \Lambda^+ \leq \pi_a(t) = \pi_a(g(t))
    = \pi_{-}(t)$, and so the statement holds.

If $t=y_v$ for some $v\in A_p$, then $a \prec_p v=g(t)$ and
$\pi_a(y_v)\le \pi_a(v)$, and so we are done.

If $t= u_v$ for some $v\in L^+$, we have $a \prec_p v=g(t)$ and
$\pi_a(u_v)= \pi_a(v)=\pim(v)$.
\end{proof}

\begin{cclaim}
\label{Claim-11} 
Let $a\in A$ and $x\in \ar\setm (\ap\cup \aq)$, $x\prer a$. 
If $\Lambda$ isolates $x$ from  $a$
then $x=y_{g(x)}$ and $\Lambda$ isolates $g(x)$ from  $a$.
\end{cclaim}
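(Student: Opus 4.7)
The plan is to split on where $x$ lives in $\ar\setm(\ap\cup\aq)=Y\cup U\cup U'$ and use Claim~\ref{Claim-2} to push questions about $x$ down to $g(x)\in\ap$: first eliminate the possibilities $x\in U\cup U'$ and $v:=g(x)\in L\cup M$, and then transport the isolation from $y_v$ to $v$ using the position of $\beta_v=\pin(y_v)$ inside the tree $\mathbb I$.

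From $x\prer a$ and Claim~\ref{Claim-2} we obtain $g(x)\prep a$. If $x=u_z$ for some $z\in L^+$, then $g(x)=z\in L$, and $\pib(z)\ne S$ (otherwise (C)(d) would force $\pib(\xm i)=S$ as well, contradicting the defining condition $\pib(\xn i)\ne\pib(\xm i)$ of $L$); Claim~\ref{Claim-2.9} then demands $\{i,j\}\in\br K\cup F;2;\cup\br L;2;\cup\br \mb;2;$ where $z=\xn i$, $a=\xn j$, which is incompatible with $i\in L$, $j\in K$. The analogous argument in $q$ disposes of $x=u_{z'}\in U'$. Hence $x\in Y$, say $x=y_v$ with $v=g(x)\in A'$, which already gives $x=y_{g(x)}$. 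The very same Claim~\ref{Claim-2.9} analysis applied to $v\prep a$ rules out $v\in L\cup M$, leaving $v\in D\cup F$.

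For $v\in D\cup F$, Claim~\ref{Claim-2.5}(a) yields $\cf(\delta_v)=\oot$, $\pin(v)<\delta_v$, and $\topp{J(\delta_v)}=\delta_v$; moreover $\pib(v)=\pib(y_v)$ in both subcases (both are $S$ when $v\in D$, and both equal the same pair $\<\zeta,\eta\>$ when $v\in F$), so the function $\pi_{\cdot}(a)$ takes the same value at $v$ as at $y_v$. The core isolation step then mirrors the proof of Claim~\ref{Claim-2.17}: since $\beta_v\in\incof{J(\delta_v)}$, an easy induction on the tree of intervals shows that every $\Lambda'\in\mathbb I$ with $\Lambda'\subsetneq J(\delta_v)$ and $\beta_v\in\Lambda'$ satisfies $\bott{\Lambda'}=\beta_v$. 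The strict inequality $\bott\Lambda<\beta_v$ therefore forces $\Lambda\supseteq J(\delta_v)$, whence $\topp\Lambda\ge\topp{J(\delta_v)}=\delta_v>\pin(v)$. Combined with $\bott\Lambda<\beta_v<\gu(\delta_v)\le\pin(v)$ (using $\pin(v)\in J(\delta_v)\setm\gu(\delta_v)$ from Claim~\ref{Claim-2.6}), this yields $\bott\Lambda<\pin(v)<\topp\Lambda$, and the remaining isolation conditions transfer immediately from $\topp\Lambda\le\pi_{y_v}(a)=\pi_v(a)$. The one delicate point is the left-endpoint observation, but it is exactly the mechanism already exploited in Claim~\ref{Claim-2.17}, so no essentially new idea is required.
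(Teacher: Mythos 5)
Your argument is correct and follows essentially the same route as the paper's own proof: you use Claim~\ref{Claim-2} (the paper uses Claim~\ref{Claim-1}) to get $g(x)\prep a$, exclude $x\in U\cup U'$ and $g(x)\in L\cup M$ via the $\Delta$-system structure (Claim~\ref{Claim-2.9}), and then run the same tree-of-intervals dichotomy --- either $\Lambda\supseteq J(\delta_{g(x)})$ or $\bott\Lambda=\beta_{g(x)}$ --- that the paper phrases through the levels $\ell$ and $j$. Your extra remarks (that $\pib(y_v)=\pib(v)$ so conditions (ii)/(iii) transfer, and the appeal to goodness for $\pin(v)\in J(\delta_v)\setminus\gu(\delta_v)$) only make explicit what the paper leaves implicit.
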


\begin{proof}
    We have $g(x)\prep a$ by  Claim \ref{Claim-1}, so as $a\in A$ we infer
    that $g(x)\not\in L\cup M$, and thus $x\not\in U\cup U'$.
    Hence $x\in Y$ and $g(x)\in D\cup F$, and so $x=y_{g(x)}$ and
    $\pin(g(x)) < \delta_{g(x)}$.

Let $J({\delta}_{g(x)})=\contint {\pin(g(x))}{j}$
and $\Lambda=\contint {\pin(x)}{\ell}$.
If $\ell>j$ then $\bott\Lambda=\pin({y_{g(x)}}) = \pin(x)$,
which is impossible.
If $\ell\le j$ then $J({\delta}_{g(x)})\subs \Lambda$
and so $\bott{\Lambda}<\pin(g(x))<\topp \Lambda$, i.e.  
$\Lambda$ isolates $g(x)$ from  $a$.  
\end{proof}

\begin{lemma}
\label{Lemma 2.22}  $(A_r,\prer,i_r)$
satisfies $(P6)$.
\end{lemma}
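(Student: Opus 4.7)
The plan is to verify (P6) for $\<\ar,\prer,\ir\>$ by a case analysis on where $x$ and $y$ lie in the partition $\ar=\ap\cup\aq\cup Y\cup U\cup U'$, reducing each case to (P6) applied in the original condition $p$ or $q$ via the isolation-transfer Claims \ref{Claim-10} and \ref{Claim-11}. Fix $x\prer y$ in $\ar$ and $\Lambda\in\mathbb I$ isolating $x$ from $y$ under the hypothesis of (P6)(a) or (P6)(b); the goal is to produce $z\in\ar$ with $x\prer z\prer y$ and $\pi(z)=\topp\Lambda$.

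The easiest subcase, $\{x,y\}\subs\ap$ (resp.\ $\{x,y\}\subs\aq$), is immediate: (P6) in $p$ (resp.\ $q$) gives a witness in $\ap\subs\ar$, and $\prep\subs\prer$ yields the conclusion. The central subcase is $x\in A$, $y\in\ar\setm\ap$: Claim \ref{Claim-10} says $\Lambda$ isolates $x$ from $g(y)\in\ap$, so (P6) applied in $p$ to $(x,g(y))$ produces $z_0\in\ap$ with $x\prep z_0\prep g(y)$ and $\pi(z_0)=\topp\Lambda$. The crucial step is to verify $z_0\in A$: since $x\in A$ gives $\pi(x)\in\pi''A\subs Z_0$ and $\pi(x)\in\Lambda$, we have $\topp\Lambda\in Z$; combined with condition (\ref{az}) and the $\pi_B$-constraints from (P3) under the hypothesis of (P6), this forces the index of $z_0$ into $K$, so $z_0\in A$. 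Claim \ref{Claim-1} then delivers $z_0\prer y$. The dual subcase $x\in\ar\setm(\ap\cup\aq)$, $y\in A$ is symmetric via Claim \ref{Claim-11}, which identifies $x$ with some $y_{g(x)}\in Y$ and reduces to (P6) in $p$ applied to $(g(x),y)$.

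The remaining subcases, $x,y\in\ar\setm A$ with at least one in $Y\cup U\cup U'$, are treated by chaining Claims \ref{Claim-10} and \ref{Claim-11} to obtain that $\Lambda$ isolates $g(x)$ from $g(y)$ in $\ap$; then (P6) in $p$ produces a witness, which is reinterpreted in $\ar$ using Claims \ref{Claim-1}--\ref{Claim-3}. The main obstacle, recurring throughout, is the verification that a witness $z_0\in\ap$ produced by applying (P6) in $p$ actually lies in $A$. Ruling out the possibility that $z_0$ has index in $L\cup M$ (where $\pi(z_0)\in Z$ alone does not force $z_0\in A$) requires careful use of (P3) together with the $\pi_B$-hypothesis in (P6)(a)/(b) to control the block type of $z_0$ along the chain $x\prep z_0\prep g(y)$.
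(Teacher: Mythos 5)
Your outline matches the paper's proof only in the ``easy'' regime where $\delta_{g(s)}<\delta_{g(t)}$: there Claim \ref{Claim-4} supplies an intermediate $a\in A$, Claims \ref{Claim-10} and \ref{Claim-11} transfer the isolating interval to a pair inside $A_p$, and the witness produced by (P6) in $p$ is pushed into $A$ by the argument you sketch ($\Lambda^+\in Z$, condition (G), block bookkeeping via (P3) and Claim \ref{Claim-2.1}). The genuine gap is the remaining case $\delta_{g(s)}=\delta_{g(t)}$, which your proposal's strategy cannot handle and which is precisely the case the sets $U,U'$ were invented for. Concretely: there $s$ must be some $y_v\in Y$ and $t\in A_p\cup A_q$ with $g(t)\in L$, $\delta_v=\delta_{g(t)}=\Lambda^+$ and $\Lambda=J(\delta_t)$. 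Claim \ref{Claim-4} gives no element of $A$ between $s$ and $t$ (it needs strict inequality of the $\delta$'s), so you cannot ``chain'' Claims \ref{Claim-10} and \ref{Claim-11}, each of which requires one endpoint in $A$. Worse, even when (P6) can be applied inside $p$ to $(v,g(t))$, any witness $z_0\in A_p$ it yields has $\pi(z_0)=\Lambda^+=\pi_-(g(t))$, hence by Claim \ref{Claim-2.1}(c) lies in the block of the $L$-element $g(t)$ and therefore has index in $L$; such a point is never in $A$ (and the remark after (G) stresses that for $L\cup M$ indices $\pi(z_0)\in Z$ gives nothing), and a point of $A_p\setminus A$ can only be $\preceq_r$-below $t\in A_q\setminus A$ through some element of $A$, which does not exist at level $\delta_t$. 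So no witness of the kind your reduction produces can work.

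The paper resolves this case by taking the witness to be the \emph{new} point $u_t\in U\cup U'$, whose sole purpose is this step: $\pi(u_t)=\pi_-(g(t))=\delta_t=\Lambda^+$, and one checks $y_v\preceq_r u_t\preceq_r t$ directly from the defining relations of $\preceq_r$ (this is also the content of Claim \ref{Claim-2.17} and the remark after it: for $x\in L$ an interval \emph{can} isolate $y_x$ from $x$, unlike for $F\cup D\cup M$). Your proposal only ever treats $U\cup U'$ as possible locations of $x,y$, never as a source of witnesses, so this case is missing and the argument as proposed fails; a complete proof must single it out and use $u_t$ (together with the observation that $s\in U\cup U'$ itself is excluded by the hypothesis of (P6), since $\pi(u_x)=\pi_-(u_x)$).
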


\begin{proof}
Assume that $\{s,t\}\in \br \ar;2;$, $s\prer t$ and $\Lambda$
isolates ${s}$ from ${t}$. Suppose that $\pin(s)\neq\pi_{-}(s)$ if
$s\not\in B_S$. So, $s\not\in U\cup U'$. We should find $v\in A_r$
such that $s\;\mir\; v\;\mir\; t$ and $\pin(v)=\Lambda^+$. 
Note that since $s\prer t$, we have $\delta_{g(s)}\leq
\delta_{g(t)}$ by Claims  \ref{Claim-2} and \ref{Claim-2.10}.

We can assume that 
  $\{s,t\}\notin \br \ap;2;\cup \br \aq;2;$
because $p$ and $q$ satisfy (P6).

\newcases
\begin{ccase}
${\delta}_{g(s)}<{\delta}_{g(t)}$.  
\end{ccase}

By  Claim \ref{Claim-4} there is $a\in A$ with $s\prer a \prer t$.
Moreover, $g(s)\prep a \prep g(t)$ by  Claim \ref{Claim-1}.

\begin{scase}
$\pin(a)\in \Lambda$.  
\end{scase}

Then $\pib(s)=\pib(a)$ and so $\pi_s(t)=\pi_a(t)$.
Thus $\Lambda$ isolates $a$ from  $t$.

If $t\in \ap $ $(t\in \aq)$ then applying (P6)  in $p$  (in $q$)
for $a$, $t$ and $\Lambda$ we obtain $b\in \ap$  ($b\in \aq$) 
such that $a\prep b\prep t$  ($a\preq b\preq t$) and $
\pin(b)=\topp{\Lambda}$. Then $s\prer a \prepq b\prepq t$, so we are done.

Assume now that $t\notin \ap\cup \aq$.

By  Claim \ref{Claim-10}, the interval  $\Lambda$ isolates $a$ from  $g(t)$ .
Since $\pim(a)\ne \pin(a)$ 
if $a\not\in B_S$,
we can apply (P6) in $p$ to get a
$b\in \ap $ with $\pin(b)=\topp{\Lambda}$ and $a\prep b\prep g(t)$.

       Note that as $\pin(a)\in \Lambda, a\in A$ and $\pin(b)
       =\Lambda^+$, we have that $\pin(b)\in Z$.

If  $\pib(a)=\pib(b)$,  we have $b\notin M\cup L$ because $a\in
A$.

If $\pib(a)\neq \pib(b)$, then $\pim(b)=\pin(b)=\Lambda^+\leq
\pin(t)$. Note that if $t\in U\cup U'$, then $\pi(t)=\Lambda^+$,
and so we are done. Thus, we may assume that $t\in Y$. Then, we
have $\pib(b)=\pib(t)=\pib(g(t))$ and $g(t)\in F$. Hence $b\in
K\cup F$.

In both cases we have $b\notin M\cup L$, so
 ${\pin}(b)\in Z$ implies $b\in A$.
Thus $b\prer t$ by  Claim \ref{Claim-1}, and so $b$ witnesses (P6).

\begin{scase}
$\pin(a)\notin \Lambda$.  
\end{scase}

Since $p$ and $q$ satisfy $(P6)$ and $\Lambda$ isolates $s$ from $a$, 
we can assume that $s\notin \ap\cup \aq$.

Hence $s=y_{g(s)}$ and 
$\Lambda$ isolates $g(s)$ from $a$ by  Claim \ref{Claim-11}. Since 
$\pin(g(s))\ne \pim(g(s))$ if $g(s)\not\in B_S$,
 there is 
$v\in \ap$ with $g(s)\prep v\prep a$ and  $\pin(v)=\topp {\Lambda}$.
Since $y_{g(s)}\prer g(s)$ by the definition of
     $\prer$, we have that $v$ witnesses (P6).

\begin{ccase}
${\delta}_{g(s)}={\delta}_{g(t)}$.  
\end{ccase}
 
\begin{scase}
$s\in \ap$.  
\end{scase}

     Since $s\in A_p$, $s\prer t$ and $\delta_s = \delta_{g(t)}$
     we infer from  Claim \ref{Claim-9} that $t\in A_p$, which was excluded.

     By means of a similar argument, we can show that $s\in A_q$
     is also impossible.

\begin{scase}
$s=y_v$ for some $v\in \ap$.  
\end{scase}

         We have that $\delta_v = \delta_{g(t)}$. Note that since
         $\Lambda^- <\pin(s) < \Lambda^+$, we have
         $\delta_v\leq \Lambda^+$.

Thus  $\pin(t)\ge \topp{\Lambda}\ge {\delta}_{v}={\delta}_{g(t)}$.
Since we can assume that $\pin(t)>\topp{\Lambda}$,
we have $\pin(t)>{\delta}_{g(t)}$. 
If $t\in \ap\cup\aq$ and $g(t)\in F\cup D\cup M$, or 
$t\in Y$, or $t\in U\cup U'$ then 
$\pin(t)\le {\delta}_{g(t)}$. Thus we have $t\in \ap\cup \aq$
and  $g(t)\in L$.

 Note that as $\pi_B(t)\neq S$, if $\pib(y_v)=\pib(t)$ we would
 infer that $v\in F$ and hence $\delta_t = \delta_{g(t)} <
 \delta_v$. So $\pib(s)\neq \pib(t)$. Now since $\Lambda$
 isolates $s$ from $t$, we deduce that $\delta_v = \delta_t =
 \Lambda^+$, and hence $\Lambda = \jint{\delta_t}$.

Assume that $t\in \aq$ (the case $t\in \ap$ is simpler).
Then $g(t)=t'\in L$.
Since  $\pin(t)>{\delta}_{t}=\pim(t)$ we have $\pin(t')>\pim(t')$ and so $t'\in L^+$.

Since $y_v\prer t$ we have  $y_v\pren{1q} t$, i.e.   
$v\prep t'$ and so $y_v\pre{2} y_{t'}$.
Thus $y_v\pren{4q} u_t$.
Hence $y_v\prer u_t\prer t$ and $\pin(u_t)={\delta}_t=\topp \Lambda$,
i.e. $u_t$ witnesses that (P6) holds.
\end{proof}

This completes the proof of Lemma \ref{Lemma-2.3}, i.e. $\pcal$ satisfies
${\kappa}^+$-c.c.
\end{proof}


\end{document}